\documentclass[a4paper,11pt]{amsart} 
\usepackage{amsmath,amssymb} 
\usepackage{mathtools}
\usepackage{amsthm}
\usepackage{mathrsfs}
\usepackage{graphics}
\usepackage[all]{xy} 
\usepackage{color}
\newcommand{\relmiddle}[1]{\mathrel{}\middle#1\mathrel{}}

\theoremstyle{plain}
\newtheorem{thm}{Theorem}

\newtheorem{prop}{Proposition}
\newtheorem*{main}{Main theorem}
\newtheorem*{introthm}{Theorem}

\theoremstyle{definition}

\newtheorem{eg}{Example}

\theoremstyle{remark}
\newtheorem{rem}{Remark}

\newcommand\R{\mathbb{R}}
\newcommand\C{\mathbb{C}}

\newcommand\pr{\mathbb{P}}
\newcommand\A{\mathbb{A}}

\makeatletter
\c@MaxMatrixCols=14
\makeatother

\usepackage[left=20mm,right=20mm,top=20mm,bottom=23mm,includefoot]{geometry}
\title[Autonomous limit of  4-dimensional Painlev\'e-type equations]{Autonomous limit of  4-dimensional Painlev\'e-type equations and degeneration of curves of genus two}  
\author{ 
Akane Nakamura
} 
\address{Department of Mathematics, Faculty of Science, Josai University\\
Keyakidai 1-1, Sakado, Sakado, 350-0295, Japan}
\email{a-naka@josai.ac.jp}

\subjclass[2010]{34M55,  33E17, 14H70}

\keywords{integrable system, Painlev\'e-type equations, isospectral limit, spectral curve, hyperelliptic curve, degeneration of curves}

\usepackage{multicol}
\usepackage{paralist}

\usepackage{float}
\restylefloat{table}

\newif\ifpersonalnote \personalnotetrue

\newcommand{\personalnoteOFF}{%
  \ifpersonalnote\personalnotefalse}

\personalnoteOFF

\begin{document} 

\begin{abstract}  
	In recent studies, 4-dimensional analogs of the Painlev\'e equations were listed and there are 40 types.
	The aim of the present paper  
	is to geometrically characterize these 40 Painlev\'e-type equations.
	For this purpose, we study the autonomous  limit of
	these equations
	and 
	degeneration of their spectral curves. 
	The spectral curves are 2-parameter families of genus two curves and their 
	generic degeneration are one of the types classified by  Namikawa and Ueno.
	Liu's algorithm enables us to find the
	degeneration types  of the spectral curves for
	our 40 types of integrable systems.  
	This result is analogous to the following fact;
	the families of the spectral curves
	of the autonomous 2-dimensional Painlev\'e equations  $P_{\rm I}$,\ $P_{\rm II}$,\ $P_{\rm IV}$,\ $P_{\rm III}^{D_8}$,\ $P_{\rm III}^{D_7}$, \ $P_{\rm III}^{D_6}$,\ $P_{V}$ and $P_{\rm VI}$ define
	elliptic surfaces with the singular fiber at $H=\infty$ of the Dynkin types 
	$E_8^{(1)}$,\ $E_7^{(1)}$,\ $E_6^{(1)}$,\ $D_8^{(1)}$,\ $D_7^{(1)}$,\ $D_6^{(1)}$,\ $D_5^{(1)}$ and $D_4^{(1)}$, respectively.
\end{abstract}

\maketitle 
\numberwithin{equation}{section}

\section{Introduction}\label{intro}
The Painlev\'e equations are 8 types of nonlinear second-order ordinary differential equations with the Painlev\'e property\footnote{A differential equation is said to have the Painlev\'e property if 
its general solution has no critical singularities that depend on the initial values.
} which are not solvable by elementary functions.
The Painlev\'e equations have various interesting features;  they can be derived from isomonodromic deformation of certain linear equations, they are linked by degeneration process, they have affine Weyl group symmetries, they can be derived from reductions of soliton equations, and they  are equivalent to nonautonomous Hamiltonian systems.
Furthermore, their autonomous limits are integrable systems solvable by elliptic functions.
We call such systems the autonomous Painlev\'e equations.

Various generalization of the Painlev\'e equations have been proposed by focusing on one of the features of the Painlev\'e equations.
The main two directions of generalizations are  higher-dimensional analogs and difference analogs.
We treat the 4-dimensional analogs in this paper.
The eight types of the Painlev\'e equations are called the 2-dimensional Painlev\'e equations in this context, and the higher-dimensional analogs are $2n$-dimensional Painlev\'e-type equations ($n=1,2,\dots$).

Recently, classification theory of linear equations up to Katz's operations have been developed.
Since these operations of linear equations leave isomonodromic deformation equations invariant~\cite{MR2363425}, we can make use of classification theory of linear equations to  classify the Painlev\'e-type equations\footnote{In this paper we use the term the Painlev\'e-type equations synonymously with  isomonodromic deformation equations, despite the fact that Painlev\'e has never investigated isomonodromy. We prefer to use the term the Painlev\'e-type equations rather than the Schlesinger-type equations in order to express that our aim is to understand various higher-dimensional analogs of the Painlev\'e equations.}.
As for the 4-dimensional case, classification and derivation of the Painlev\'e-type equations from isomonodromic deformation have been completed~\cite{sakai_imd, kns, 2016arXiv160803927K, 2016arXiv160905263K, MR3740334}. 
According to their result, there are 40 types of 4-dimensional Painlev\'e-type equations.
Among these 40 types of equations, some of the equations coincide with 
 equations already known from different contexts.
Such well-known equations  along with new equations  are all organized in a unified way: isomonodromic deformation and degeneration.

There are problems to their classification.
They say that there are ``at most'' 40 types of 4-dimensional Painlev\'e-type equations.
There is no guarantee that these 40 equations are actually distinct.
Some Hamiltonians of these equations look very similar to each other.
Since the appearance of Hamiltonians or equations may change significantly by changes of variables, we can not classify equations by their appearances.
Intrinsic or geometrical studies of these equations may be necessary  to distinguish the Painlev\'e-type equations.
The aim of this paper is to initiate geometrical studies of 4-dimensional Painlev\'e-type equations starting from these concrete equations.

Let us first review how the 2-dimensional Painlev\'e equations are geometrically classified.
Okamoto  initiated the studies of the space of initial conditions of the Painlev\'e equations~\cite{MR614694}.
He constructed  the rational surfaces, whose points correspond to the germs of the meromorphic solutions of the Painlev\'e equations by resolving the singularities of the differential equations.
Sakai extracted the key features of the spaces of initial conditions and classified what he calls ``generalized Halphen surfaces'' with such features~\cite{MR1882403}.
The classification of the 2-dimensional difference Painlev\'e equations correspond to the classification of  generalized Halphen surfaces, and 8 types of such surfaces give the Painlev\'e differential equations.
Such surfaces are distinguished by their anticanonical divisors.
For the autonomous 2-dimensional Painlev\'e equations, the spaces of initial conditions are elliptic surfaces and their anticanonical divisors are one of the Kodaira types~\cite{MR3077699}.
We can say that the 2-dimensional autonomous Painlev\'e equations are characterized or distinguished by the corresponding Kodaira types.

It is expected to carry out a similar study for the 4-dimensional Painlev\'e-type equations.
However, a straightforward generalization of the 2-dimensional cases 
 seems to contain many difficulties.
One of the reason is that the classification of 4-folds is much  more difficult than that of surfaces.
We want to avoid facing such difficulties by considering the spectral curves of the autonomous limit of these Painlev\'e-type equations.
While the geometry
is made simple considerably in the autonomous limit,
 important characteristics of the original non-autonomous equations are retained.

Integrable systems are Hamiltonian systems on symplectic manifolds $(M^{2n},\omega,H)$ with $n$ functions  $f_1=H,\dots,f_n$ in involution $\{f_i,f_j\}=0$.
The regular level sets of the momentum map $F=(f_1,\dots,f_n)\colon M\to \C^n$ are the Liouville tori.
The image under $F$ of critical points are called the bifurcation diagram, and it is studied for characterizing integrable systems~\cite{MR2036760}.
However, studying the bifurcation diagrams may become complicated for higher dimensional cases.
When an integrable system is expressed in a Lax form, it is not difficult to determine the discriminant locus of spectral curves.
We can often find correspondence between the bifurcation diagrams and  the discriminant locus of spectral curves~\cite{MR1409362}. 
Therefore, we mainly study the degeneration of spectral curves in this paper.

Let $\{H_{i}\}_{i=1,\dots, g}$  be a set of functionally independent invariants of an integrable system $(M,\omega,H)$. 
In the 2-dimensional case, the following holds.
\begin{introthm}[cf.~Theorem~\ref{thm:genus1spectral}]
Each autonomous 2-dimensional Painlev\'e equation defines an elliptic surface, whose general fibers are spectral curves of the system.
The Kodaira types of the singular fibers at $H_{\rm J}=\infty$ are listed as follows.
 \begin{table}[H]
 \centering
  \begin{tabular}{|c|c|c|c|c|c|c|c|c|}\hline
  Hamiltonian & $H_{\rm VI}$ & $H_{\rm V}$ &$H_{\rm III(D_6)}$&$H_{\rm III(D_7)}$ &$H_{\rm III(D_8)}$&$H_{\rm IV}$&$H_{\rm II}$&$H_{\rm I}$\\ \hline
 Kodaira type & ${\rm I}_0^{*}$ & ${\rm I}_1^{*}$ & ${\rm I}_2^{*}$ & ${\rm I}_3^{*}$& ${\rm I}_4^{*}$& ${\rm IV}^{*}$& ${\rm III}^{*}$& ${\rm II}^{*}$\\ \hline
 Dynkin type & $D_4^{(1)}$ & $D_5^{(1)}$ & $D_6^{(1)}$ & $D_7^{(1)}$& $D_8^{(1)}$& $E_6^{(1)}$& $E_7^{(1)}$& $E_8^{(1)}$\\ \hline
  \end{tabular}
 \caption{The singular fiber at $H_{\rm J}=\infty$ of spectral curve fibrations of the autonomous 2-dimensional Painlev\'e equations.}
 \end{table}
\end{introthm}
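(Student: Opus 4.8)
\emph{Strategy of proof.} The plan is to reduce all eight cases to a single recipe and then read the table off Kodaira's classification of elliptic fibres. For each equation $P_{\rm J}$ one starts from the linear system whose isomonodromic deformation is $P_{\rm J}$ and passes to its autonomous (isospectral) limit, obtained by freezing the deformation parameter. This limit carries a Lax form $\dot L(x)=[M(x),L(x)]$, so the characteristic equation
\[
  \Gamma_h\colon\quad \det\!\bigl(y\,I-L(x)\bigr)=0
\]
cuts out, for each value $h$ of $H_{\rm J}$, a plane affine curve whose coefficients are first integrals of the flow. Since the phase space is $2$-dimensional and $\Gamma_h$ has arithmetic genus one, once the parameters of the linear system (pole positions and local exponents) are fixed the only modulus surviving in those coefficients is $H_{\rm J}=h$; moreover the smooth compactification $\overline{\Gamma}_h$ is the Liouville torus of the system, hence a genuine elliptic curve for generic $h$. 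Thus $\{\overline{\Gamma}_h\}_h$ is a one-parameter family of genus-one curves over the affine $h$-line, which remains to be completed and put into relatively minimal form over $\pr^1$.

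The computation then proceeds equation by equation. First I would read off the Newton polygon of $\det(yI-L(x))$---it varies across the list because the underlying linear ODE has different configurations of regular and irregular singular points---embed $\Gamma_h$ in the corresponding toric (or Hirzebruch) surface, take the fibrewise closure over $\pr^1_h$, resolve the singularities of the resulting total space, and blow down fibral $(-1)$-curves to reach a relatively minimal smooth projective surface $\mathcal{E}_{\rm J}\to\pr^1$. Next I would bring the generic fibre to a global Weierstrass form $y^2=x^3+A(h)\,x+B(h)$ and compute the discriminant $\Delta(h)$ and the $j$-invariant, so as to locate the singular fibres. Finally, to analyse the fibre over $H_{\rm J}=\infty$, I would substitute $h=1/s$, clear denominators to obtain a minimal Weierstrass model near $s=0$, record $\bigl(v_s(A),v_s(B),v_s(\Delta)\bigr)$, and apply Tate's algorithm: additive reduction with $j$ finite gives the fibres $I_n^*$ for $P_{\rm VI},P_{\rm V},P_{\rm III}^{D_6},P_{\rm III}^{D_7},P_{\rm III}^{D_8}$ with $n=0,1,2,3,4$, and the three remaining cases fall into the slots $IV^*,III^*,II^*$ for $P_{\rm IV},P_{\rm II},P_{\rm I}$. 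Passing from the Kodaira symbol to the dual graph of the fibre---$I_n^*\leftrightarrow D_{n+4}^{(1)}$, $IV^*\leftrightarrow E_6^{(1)}$, $III^*\leftrightarrow E_7^{(1)}$, $II^*\leftrightarrow E_8^{(1)}$---then produces the last row of the table.

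The heart of the argument is not any single case but the uniform, careful bookkeeping of the compactification in the irregular-singularity cases $P_{\rm I},P_{\rm II},P_{\rm III}^{D_8},P_{\rm III}^{D_7},P_{\rm III}^{D_6},P_{\rm IV},P_{\rm V}$: there the closure of $\Gamma_h$ meets the toric boundary in a way that forces a controlled sequence of blow-ups before the relatively minimal model is reached, and one must check both that no further fibral $(-1)$-curve can be contracted (minimality) and that the components over $s=0$ are counted with the right multiplicities. A secondary, purely computational point is to confirm that $H_{\rm J}=\infty$ is the \emph{only} point carrying this large fibre, the other singular fibres being of multiplicative type $I_k$; this is forced by the Euler-number relation $e(\mathcal{E}_{\rm J})=\sum_v e(\mathcal{E}_{{\rm J},v})=12\,\chi(\mathcal{O}_{\mathcal{E}_{\rm J}})$ together with the behaviour of the $j$-invariant found above, and it also serves as an independent consistency test against the known elliptic fibration on the autonomous space of initial conditions.
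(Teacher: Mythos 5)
Your proposal follows essentially the same route as the paper: form the spectral curve of the isospectral (autonomous) Lax pair, view it as a genus-one curve over the affine $h$-line, pass to a Weierstrass model and its Kodaira--N\'eron (relatively minimal) completion over $\pr^1$, and determine the fibre at $H_{\rm J}=\infty$ from $\operatorname{ord}(\Delta)$ and $\operatorname{ord}(j)$ via Tate's algorithm. The paper carries this out explicitly only for $P_{\rm I}$ (getting $\operatorname{ord}_\infty\Delta=10$, hence ${\rm II}^*=E_8^{(1)}$) and delegates the reduction of the remaining spectral curves to Weierstrass form to computer algebra, so your extra scaffolding (Newton polygons, toric compactification, the Euler-number consistency check) is elaboration rather than a different method.
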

It is well-known that the Dynkin's types in the above diagram  appear in the configurations of vertical leaves of the Okamoto's spaces of initial conditions~\cite{MR614694}.

For the autonomous 4-dimensional Painlev\'e-type equations, general invariant sets $\bigcap_{i=1,2}H_i^{-1}(h_i)$ are the 2-dimensional Liouville tori. 
Such Liouville tori are  the Jacobian varieties of the corresponding spectral curves.
Instead of studying the degenerations of 2-dimensional Liouville tori, we study the degenerations of the spectral curves of genus 2.
As an analogy of the above theorem for the 2-dimensional Painlev\'e equations, we  find the following:
\begin{main}[cf.~Theorem~\ref{thm:table}]
The spectral curves of the autonomous 4-dimensional Painlev\'e-type equations have the following types of generic degenerations as in Table~\ref{table:generic} in Section~\ref{sec:spectral}.
\end{main}
The table shows, for example, that generic degeneration of the spectral curves  of the autonomous matrix Painlev\'e equations $H_{\rm VI}^{\rm Mat}$, $H_{\rm V}^{\rm Mat}$, $H_{\rm III(D_6)}^{\rm Mat}$, $H_{\rm III(D_7)}^{\rm Mat}$ and $H_{\rm III(D_8)}^{\rm Mat}$
are 
 ${\rm I_0-I_0^*-1}$, ${\rm I_0-I_1^*-1}$, ${\rm I_0-I_2^*-1}$, ${\rm I_0-I_3^*-1}$ and ${\rm I_0-I_4^*-1}$ in Namikawa-Ueno's notation. 
Those of $H_{\rm IV}^{\rm Mat}$, $H_{\rm II}^{\rm Mat}$ and $H_{\rm I}^{\rm Mat}$ are
${\rm I_0-IV^*-1}$, ${\rm I_0-III^*-1}$ and ${\rm I_0-II^*-1}$.
Therefore, generic degeneration of the spectral curves of the autonomous 4-dimensional matrix Painlev\'e equations have one additional elliptic curve
to those counterpart of the 2-dimensional systems.

There are various ways to have integrable systems in general.
To identify  equations from different origins is often not easy; equations may change their appearance by transformations.
It is hoped that such intrinsic geometrical studies will be helpful for such identification problems. 
One possible approach using the degeneration of the Painlev\'e divisors\footnote{The Painlev\'e divisors introduced by Adler-Moerbeke\cite{MR999312,MR2095251} compactify affine Liouville tori.} is proposed in \cite{nakjmm}.
This direction will be investigated further in a forthcoming paper.

In recent years, Rains and his collaborators~\cite{MR3397405,2016arXiv160708876R,
2013arXiv1307.4033R} brought about crucial developments in the Painlev\'e-type  (difference) equations using noncommutative geometry.
In their theory, the anticanonical divisors of rational surfaces, determining the Poisson structures, are one of the key ingredients.
The relation between the anticanonical divisors in their work and the generic degeneration of spectral curves will be explained in 
future work.

\subsubsection*{Contents}  
The organization of this paper is as follows.
In  Section~\ref{sec:4-dim}, after summarizing preliminaries, we review the classification of the 4-dimensional Painlev\'e-type equations.
In  Section~\ref{sec:auto}, we consider the autonomous limit of these 40 equations.
In Section~\ref{sec:spectral}, we  study the generic degeneration of the spectral curves to characterize these integrable systems.
In Appendix \ref{ap:con}, we list integrals of the autonomous 4-dimensional Painlev\'e-type equations.
In Appendix \ref{graph}, we list the dual graph of the singular fibers appeared in our table.

\subsubsection*{Acknowledgments} 

\noindent 
I would like to express my utmost gratitude to 
 Prof.~Hiroshi Kawakami, Prof. ~Eric Rains, and Prof.~Hidetaka Sakai.
I am also grateful to
Prof.~Hayato Chiba,
Prof.~Yusuke Nakamura,
Prof.~Seiji Nishioka,
Prof.~Akishi Kato, 
Prof.~Hiroyuki Kawamuko,
Prof.~Kinji Kimura,
Prof.~Kazuki Hiroe,
Prof.~Shinobu Hosono,
Prof.~Takayuki Okuda,
and
Prof.~Ralph Willox
for valuable discussions and advices.

\section{Classification of 4-dimensional Painlev\'e-type equations}\label{sec:4-dim}
In this section, we review  some of the recent progresses in  classification of the 4-dimensional Painlev\'e-type equations,
 and introduce notation we use in this paper.
The contents of this section is a summary of the other papers~\cite{sakai_imd,kns,2016arXiv160803927K, 2016arXiv160905263K, MR3740334} and  references therein.

The Painlev\'e equations were found by Painlev\'e through his classification of the second order algebraic differential equations with the 
``Painlev\'e property''.
However, a straightforward application of Painlev\'e's classification method to higher-dimensional cases seems to face difficulties.\footnote{Works of Chazy~\cite{MR1555070} and Cosgrove~\cite{MR1738749,MR2220477} are famous in this direction.} 
Therefore other properties which characterize the Painlev\'e equations become important for further generalization.
The Painlev\'e equations 
can be expressed as Hamilton systems~\cite{Malmqist,MR614694}.
\begin{align*}
\frac{dq}{dt}=\frac{\partial H_{\rm J}}{\partial p},
\hspace{4mm}
\frac{dp}{dt}=-\frac{\partial H_{\rm J}}{\partial q}.
\end{align*}
We list the Hamiltonian functions for all the 2-dimensional Painlev\'e equations for later use. 
\begin{align*} 
 t(t-1)H_{\rm VI}\left( \begin{matrix} \alpha , \beta \\ \gamma , \epsilon \end{matrix};t;q,p\right)= {}& q(q-1)(q-t)p^2\\ 
 & + \{ \epsilon q(q-1)-(2\alpha +\beta +\gamma +\epsilon )q(q-t)+\gamma
 (q-1)(q-t)\} p\\ 
 & + \alpha (\alpha +\beta )(q-t),\\ 
 tH_{\rm V}\left( \begin{matrix} \alpha , \beta \\ \gamma \end{matrix} ;t;q,p\right)= {}&
  p(p+t)q(q-1) 
+\beta pq+\gamma p-(\alpha +\gamma )tq,\\ 
 H_{\rm IV}\left(\alpha , \beta;t;q,p\right)= {}&
pq(p-q-t)+\beta p+\alpha q,\\
 tH_{\rm III}(D_6)\left(\alpha , \beta ;t;q,p\right)= {}&
p^2q^2-(q^2-\beta q-t)p-\alpha q,\\ 
 tH_{\rm III}(D_7)\left(\alpha;t;q,p\right)= {}&  
p^2q^2+\alpha qp+tp+q,\quad
 tH_{\rm III}(D_8)\left(t;q,p\right)= 
p^2q^2+qp-q-\frac{t}{q},\\
 H_{\rm II}\left(\alpha;t;q,p\right)= {}& 
p^2-(q^2+t)p-\alpha q,\quad
 H_{\rm I}\left(t;q,p\right)= 
p^2-q^3-tq. 
\end{align*}

The Painlev\'e equations have another important aspect initiated by R.~Fuchs~\cite{RFuchs}.
Namely, they can be derived from (generalized) isomonodromic deformation of linear equations~\cite{MR630674}.
Furthermore, these eight types of the Painlev\'e equations are linked by  processes called degenerations.
In fact, $H_{\rm I},\cdots, H_{\rm V}$ can be  derived from $H_{\rm VI}$ through degenerations.

\vspace{3mm}
{\fontsize{11pt}{-3pt}
\begin{xy}
			{(0,0) 
				*{\begin{tabular}{c}
						$H_{\mathrm{VI}}$
					\end{tabular}
				}
			},	
		{\ar (5,0);(15,0)},		
			{(21,0) 
				*{\begin{tabular}{c}
					$H_{\mathrm{V}}$
								\end{tabular}
					}
			},	
		{\ar (25,0);(35,5)},	
		{\ar (25,0);(35,-5)},	
			{(44,5) 
				*{\begin{tabular}{c}
				$H_{\mathrm{III}}(D_6)$
											\end{tabular}
								}
						},	
			{(43,-5) 
				*{\begin{tabular}{c}
				$H_{\mathrm{IV}}$
				\end{tabular}
											}
									},	
		{\ar (53,5);(61,5)},
		{\ar (53,5);(61,-5)},	
		{\ar (53,-5);(61,-5)},						
		{(69,5) 
			*{\begin{tabular}{c}
			$H_{\mathrm{III}}(D_7)$
			\end{tabular}
										}
								},	
		{(69,-5) 
			*{\begin{tabular}{c}
			$H_{\mathrm{II}}$
			\end{tabular}
													}
											},	
		{\ar (76,5);(85,5)},
		{\ar (76,5);(85,-5)},	
		{\ar (76,-5);(85,-5)},									
	{(92,5) 
				*{\begin{tabular}{c}
				$H_{\mathrm{III}}(D_8)$
				\end{tabular}
											}
									},	
	{(92,-5) 
				*{\begin{tabular}{c}
				$H_{\mathrm{I}}$
				\end{tabular}
														}
												},																					
\end{xy}	
}

\subsubsection{The classification of Fuchsian equations and isomonodromic deformation}
If we fix the number of accessory parameters and identify the linear equations that transform into one another by Katz's operations (addition and middle convolutions)~\cite{MR1366651,MR2363121}, 
we have only finite types of linear equations.
We explain the notion of spectral type used in this paper.
We follow the notion used in Oshima~\cite{BB10865098} for Fuchsian linear equations, Kawakami-Nakamura-Sakai~\cite{kns} for unramified linear equations and Kawakami~\cite{2016arXiv160803927K} for ramified equations.
For the classification of linear equations, we need to discern the types of linear equations.
We review the local normal forms of linear equations, and introduce symbols to express such data.
We study linear systems of first-order equations
\begin{equation}\label{lde}
\frac{dY}{dx}=A(x)Y.
\end{equation}
We first consider the Fuchsian case where
\[
 A(x)=\sum_{i=1}^n\frac{A_i}{x-t_i}.
\]
We assume that each matrix $A_i$ is diagonalizable.
The equation can be transformed into
\begin{equation*}
\frac{d\hat{Y}(x)}{dx}=\frac{T^{(i)}}{x-t_{i}}\hat{Y}(x), \quad T^{(i)}\colon {\rm diangonal}
\end{equation*}
by a local transformation $Y=P(x)\hat{Y}$.
We express the multiplicity of the eigenvalues by a non-increasing sequence of numbers.
\begin{eg}
When $T^{(i)}=\operatorname{diag}(a,a,a,b,b,c)$, we write the multiplicity as $321$.
\end{eg}
Collecting such multiplicity data for all the singular points,
the spectral type of the linear equation is defined as the $n+1$-tuples of partitions of $m$, 
\[
\underbrace{
 m^1_1m^1_2\dots m^1_{l_1}
 }
 ,
 \underbrace{
  m^2_1\dots m^2_{l_2}
  }
  ,
   \dots ,
 \underbrace{
 m^n_1\dots m^n_{l_n}
 }
 ,
 \underbrace{
  m^{\infty}_1\dots m^{\infty}_{l_\infty}
  }
 ,
 \qquad \left(\sum_{j=1}^{l_i}m_j^i=m~~\text{for}~~1\leq\forall i\leq n
 ~\text{or}~i=\infty\right),
\]
where $m$ is the size of matrices. 
\begin{thm}[Kostov\cite{MR1877365}]
Irreducible Fuchsian equations with two accessory parameters result in one of the four types by successive additions and middle convolutions:
\begin{itemize}
\item $11,11,11,11$
\item $111,111,111\hspace{5mm} 22,1111,1111\hspace{5mm}33,222,111111.$
\end{itemize}
\end{thm}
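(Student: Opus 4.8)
The plan is to recast the classification as a statement about an integer quadratic form attached to the spectral type, and then to exploit the invariance of that form under Katz's operations. To a Fuchsian equation of rank $n$ with $p$ singular points whose eigenvalue multiplicities at the $i$-th point form a partition $m_{i,1}\ge m_{i,2}\ge\cdots$ of $n$, I associate the dimension vector $\alpha$ on the star-shaped graph with one central node of value $n$ and, for each singular point, one arm whose successive nodes carry the partial sums $n-m_{i,1},\ n-m_{i,1}-m_{i,2},\dots$. With $\langle\cdot,\cdot\rangle$ the Tits form of this graph one has $\langle\alpha,\alpha\rangle=\iota$, the index of rigidity, so that the number of accessory parameters equals $2-\langle\alpha,\alpha\rangle$. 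Thus the hypothesis ``two accessory parameters'' is exactly $\langle\alpha,\alpha\rangle=0$, i.e. $\alpha$ is isotropic, and (following Crawley-Boevey) irreducibility is exactly the condition that $\alpha$ be a positive root. I would begin by verifying this dictionary on the four target types, checking that they correspond to the dimension vectors $(2;1,1,1,1)$, $(3;2,1;2,1;2,1)$, $(4;2;3,2,1;3,2,1)$ and $(6;3;4,2;5,4,3,2,1)$.

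The second step is to translate Katz's operations into reflections. Addition shifts the exponents and realizes the simple reflections at the arm nodes, while middle convolution realizes the simple reflection at the central node; both preserve irreducibility and, being orthogonal for $\langle\cdot,\cdot\rangle$, preserve $\iota$ (this is the content of Katz's rigidity-index invariance, made algorithmic by Dettweiler and Reiter). Consequently, classifying irreducible Fuchsian equations with two accessory parameters up to successive additions and middle convolutions is the same as classifying positive isotropic roots $\alpha$ on star-shaped graphs up to the action of the associated Weyl group $W$.

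The third step is pure Kac-Moody root theory. A positive imaginary root with $\langle\alpha,\alpha\rangle=0$ is, up to $W$ and up to a positive integral multiple, the indivisible null root $\delta$ of a uniquely determined affine subdiagram supported on $\mathrm{supp}(\alpha)$; indivisibility together with the requirement that each singular point contribute a genuine arm (at least two distinct exponents, so that no point is apparent) forces $\mathrm{supp}(\alpha)$ to be a connected, simply-laced, star-shaped affine Dynkin diagram. Such diagrams have a single branch node with pendant arms, and the complete list is $D_4^{(1)},\,E_6^{(1)},\,E_7^{(1)},\,E_8^{(1)}$ (the cyclic $A_n^{(1)}$ and the two-branch $D_n^{(1)}$ with $n\ge5$, as well as the non-simply-laced types, are excluded). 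Reading the four null roots back through the partial-sum dictionary of the first step returns precisely $11,11,11,11$; $111,111,111$; $22,1111,1111$; and $33,222,111111$, which is the assertion.

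I expect the dictionary of the first step --- in particular the identity $\langle\alpha,\alpha\rangle=\iota$ and the exact correspondence between Katz's middle convolution and the reflection at the central node --- to be the main obstacle, since it requires tracking how the eigenvalue multiplicities transform under convolution and checking that the ``middle'' (cohomological) normalization is what produces a reflection rather than a mere rank change. As a more self-contained alternative that sidesteps the full Kac-Moody machinery, one may instead run Oshima's rigidity-preserving reduction: since middle convolution strictly lowers the rank unless the dimension vector already lies in the dominant chamber and is fundamental, every irreducible system reduces after finitely many steps to a fundamental one, and a direct solution of the defining inequalities of a fundamental spectral type with $\iota=0$ yields the same four types. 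Either route reduces the theorem to a short finite enumeration, and the genuine content lies in the invariance statement and the reduction bookkeeping.
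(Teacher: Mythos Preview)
The paper does not supply a proof of this statement; it is quoted as a result of Kostov with a bare citation and then used as input for the later classification. So there is no paper proof to compare against, and your proposal must stand or fall on its own.

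Your route through Crawley-Boevey's dictionary and Kac--Moody root combinatorics is the standard modern way to understand Kostov's list, and the architecture of the argument is correct: the index of rigidity equals the Tits pairing $\langle\alpha,\alpha\rangle$, Katz's operations act as the simple reflections, and isotropic roots in the fundamental domain are supported on affine subdiagrams, which for star-shaped graphs with the central node in the support are precisely $\tilde D_4,\tilde E_6,\tilde E_7,\tilde E_8$. Reading off the null roots gives the four spectral types exactly as you say.

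The one genuine gap is your assertion that ``irreducibility is exactly the condition that $\alpha$ be a positive root.'' For isotropic $\alpha$ this is false as stated: every multiple $k\delta$ of an affine null root is a positive imaginary root, yet for $k\ge 2$ the corresponding spectral type (for instance $22,22,22,22$, which is $2\delta$ on $\tilde D_4$ and still has two accessory parameters) never admits an irreducible solution. Crawley-Boevey's actual criterion requires, beyond $\alpha$ being a positive root, that $p(\alpha)>\sum_t p(\beta_t)$ for every proper decomposition $\alpha=\sum_t\beta_t$ into positive roots satisfying the same parameter constraint $\lambda\cdot\beta_t=0$; since $\lambda\cdot(k\delta)=0$ forces $\lambda\cdot\delta=0$, the decomposition $k\delta=\delta+\cdots+\delta$ is admissible and gives $p(k\delta)=1<k$, which is precisely what rules out $k\ge 2$. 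You invoke ``indivisibility'' in your third step without having earned it from the first; once you replace the oversimplified equivalence by the full criterion, indivisibility follows and the rest of your outline goes through cleanly. Your alternative via Oshima's reduction to fundamental tuples is also sound and is closer in spirit to how such finite lists were originally produced.
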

\begin{rem}
Note that only the equation of the type $11,11,11,11$ has four singular points and the other three types have three singular points.
The three of the singular points can be fixed at $0,1,\infty$ by a M\"obius transformation.
Thus the three equations with only three singularities do not admit the continuous deformation of position of singularities.
Among these 4 types, only linear equation of type $11,11,11,11$ admit isomonodromic deformation, and it gives the sixth Painlev\'e equation $H_{\mathrm{VI}}$.
\end{rem}
The Katz's operations are important for studying Painlev\'e-type equations, because the following theorem holds. 
\begin{thm}[Haraoka-Filipuk~\cite{MR2363425}]\label{HF}
Isomonodromic deformation equations are invariant under Katz's operations.
\end{thm}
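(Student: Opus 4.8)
The plan is to treat the two generators of Katz's operations separately, since both addition and middle convolution act on the tuple of residue matrices of a Fuchsian system while the isomonodromic deformation is governed by the Schlesinger equations. Write the linear system as $dY/dx = (\sum_{j} A_j/(x-t_j))\,Y$ with residue matrices $A_j = A_j(t)$, and recall that isomonodromy is equivalent to the flatness of the full connection $\nabla = d - \sum_j A_j\, d\log(x-t_j)$ on the total space with coordinates $(x,t_1,\dots,t_n)$, i.e. to the Schlesinger equations $\partial A_i/\partial t_j = [A_i,A_j]/(t_i-t_j)$ for $i\neq j$ together with $\partial A_i/\partial t_i = -\sum_{j\neq i}[A_i,A_j]/(t_i-t_j)$. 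The statement to be proved is that if the $A_j$ solve this system, then so do the residue matrices produced by addition or by middle convolution.

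First I would dispose of addition, which replaces $A_j \mapsto A_j + \mu_j I$, the residue-level effect of tensoring by a rank-one local system. Because scalar matrices are central, every commutator $[A_i,A_j]$ is left unchanged, so the right-hand sides of the Schlesinger equations are literally identical, while the left-hand sides change only by $\partial \mu_i/\partial t_j$; taking the $\mu_j$ to be the (constant) exponents of the added rank-one system makes these vanish. Hence addition preserves the Schlesinger system verbatim and the associated deformation equation is unchanged.

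The substance lies in the middle convolution $mc_\lambda$. The cleanest route is to realize $mc_\lambda$ through the Euler (Riemann-Liouville) integral transform $Y(x)\mapsto \tilde Y(x) = \int_\gamma Y(s)\,(x-s)^{\lambda-1}\,ds$ on horizontal sections, whose algebraic shadow is the Dettweiler-Reiter construction: on $\mathbb{C}^{nN}$ one forms the convolution matrices $B_k$ whose only nonzero block row is the $k$-th, with $(k,l)$-block equal to $A_l$ for $l\neq k$ and to $A_k+\lambda I$ for $l=k$, and then passes to the quotient by the canonically defined invariant subspaces $\mathcal{K} = \bigoplus_k \ker A_k$ and $\mathcal{L} = \bigcap_k \ker B_k$. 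The decisive point is that the transform kernel $(x-s)^{\lambda-1}$ depends only on $x$, $s$ and the fixed parameter $\lambda$, and not on the deformation variables $t_j$; therefore the transform commutes with $\partial/\partial t_j$, so an isomonodromic family $Y(x,t)$ is carried to an isomonodromic family $\tilde Y(x,t)$, and translating back through Dettweiler-Reiter the convolved matrices $B_k(t)$ must again satisfy the Schlesinger equations, now in rank $nN$.

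The main obstacle, and the step needing genuine care, is that middle convolution is not the full convolution but its reduction modulo $\mathcal{K}$ and $\mathcal{L}$; one must check that these subspaces are horizontal for the deformation connection, so that the quotient — and hence $mc_\lambda$ itself — is defined compatibly along the entire isomonodromic family rather than only fiberwise in $x$. I would establish this either intrinsically, by identifying $\mathcal{K}$ and $\mathcal{L}$ with the sections annihilated by, or trivially acted on by, the transform (these being built from the $\ker A_k$, which vary horizontally under the Schlesinger flow), or by a direct computation, differentiating the Dettweiler-Reiter formulas for $B_k(t)$ in $t_j$ and using the Schlesinger equations for the $A_j$ to recover those for the $B_k$. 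Once well-definedness of the quotient along the deformation is secured, invariance of the deformation equation follows, completing both cases.
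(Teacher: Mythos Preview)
The paper does not give a proof of this theorem: it is quoted as a result of Haraoka--Filipuk~\cite{MR2363425} and used as input to the classification, so there is nothing in the paper to compare your argument against.

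That said, your outline is essentially the argument of the original reference. Haraoka and Filipuk work with the Dettweiler--Reiter realisation of middle convolution and verify by direct computation that if the $A_j$ satisfy the Schlesinger system then the big convolution matrices $B_k$ do as well, and that the invariant subspaces $\mathcal{K}$ and $\mathcal{L}$ are preserved by the Schlesinger flow, so that the quotient is well defined along the deformation. Your treatment of addition is the same triviality they use. The one place where your sketch is looser than their paper is the claim that the integral transform ``commutes with $\partial/\partial t_j$ because the kernel is independent of $t$'': this heuristic is correct in spirit but is not how they argue, and making it rigorous requires exactly the algebraic verification you defer to the last paragraph. If you actually carry out that direct computation (differentiate the $B_k$ and reduce using Schlesinger for the $A_j$), you recover their proof.
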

\begin{rem}
Katz's operations that do not change the type of the linear equation induce the corresponding B\"acklund transformations on the isomonodromic deformation equation.
In fact, all of the $D_{4}^{(1)}$-type affine Weyl group symmetry that $P_{\rm VI}$ possesses can be derived from Katz's operations and the Schlesinger transformations on the linear equations~\cite{nakmaster}.
\end{rem}
 
\subsubsection{The classification of Fuchsian linear equations and isomonodromic deformation}
The starting point of the classification of the 4-dimensional Painlev\'e-type equation is the following result.
\begin{thm}[Oshima~\cite{BB10865098}]
Irreducible Fuchsian equations with four accessory parameters result in one of the following 13 types by successive additions and middle convolutions\footnote{These operations are called the Katz' operations~\cite{MR1366651,MR2363121}}:
\begin{itemize}
\item$11,11,11,11,11$
\item $21,21,111,111\hspace{3mm} 31,22,22,1111\hspace{3mm}22,22,22,211$
\item $211,1111,1111 \hspace{2mm}221,221,11111\hspace{2mm}32,11111,11111$\\
$222,222,2211\hspace{2mm}33,2211,111111\hspace{2mm}44,2222,22211$\\
$44,332,11111111\hspace{2mm}55,3331,22222\hspace{2mm}{66,444,2222211}$.
\end{itemize}
\end{thm}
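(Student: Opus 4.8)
The plan is to encode the spectral type of an irreducible rank-$n$ Fuchsian system with $p$ regular singular points as a tuple of partitions $\lambda=(\lambda^{(1)},\dots,\lambda^{(p)})$, where $\lambda^{(i)}\vdash n$ lists the multiplicities of the local exponents at the $i$-th point. First I would fix the relevant numerical invariant, the index of rigidity $\operatorname{rig}(\lambda)=(2-p)n^2+\sum_{i=1}^p\sum_j(\lambda^{(i)}_j)^2$, and observe that the ``number of accessory parameters'' in the statement equals $a=2-\operatorname{rig}(\lambda)$; thus ``four accessory parameters'' is exactly $\operatorname{rig}=-2$, equivalently $\sum_i\bigl(n^2-\sum_j(\lambda^{(i)}_j)^2\bigr)=2n^2+2$. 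By Theorem~\ref{HF} together with the standard theory of Katz's operations (addition and middle convolution, after Dettweiler--Reiter), these operations preserve both irreducibility and $\operatorname{rig}$, and every irreducible tuple can be reduced by middle convolutions to a rank-minimal representative of its orbit. The classification therefore reduces to enumerating the rank-minimal (``fundamental'') tuples with $\operatorname{rig}=-2$.

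Next I would translate the problem into Kac--Moody combinatorics through Crawley-Boevey's star-shaped quiver: to $\lambda$ one attaches the dimension vector $\alpha$ with central coordinate $n$ and, along the $i$-th leg, the partial sums $n-\lambda^{(i)}_1,\ n-\lambda^{(i)}_1-\lambda^{(i)}_2,\dots$ (an empty leg for a scalar residue $\lambda^{(i)}=(n)$). Under this dictionary $\operatorname{rig}(\lambda)=(\alpha,\alpha)$ for the Tits form, middle convolution becomes the reflection at the central node and additions become the leg reflections, so Katz-equivalence becomes the Weyl group action and fundamental tuples become dimension vectors in Kac's fundamental domain, cut out by $2\alpha_v\le\sum_{w\sim v}\alpha_w$ at every node $v$. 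A short computation shows every leg inequality is automatic from the ordering of the parts of $\lambda^{(i)}$, so the only binding condition is the central one, $\sum_i\lambda^{(i)}_1\le(p-2)n$. Because each Weyl orbit of an imaginary root meets the fundamental domain in a unique element, distinct tuples satisfying this are automatically Katz-inequivalent; it remains to list them.

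The enumeration I would run off the two constraints simultaneously. Writing $S_1=\sum_i\lambda^{(i)}_1$ and $Q=\sum_i\sum_j(\lambda^{(i)}_j)^2$, rigidity reads $Q=(p-2)n^2-2$, and the elementary bound $Q\le nS_1$ combined with $S_1\le(p-2)n$ forces $S_1\in\{(p-2)n,(p-2)n-1\}$ and the nonnegative per-point deficits $\delta_i=n\lambda^{(i)}_1-\sum_j(\lambda^{(i)}_j)^2$ to satisfy $\sum_i\delta_i=2$ (the value $(p-2)n-1$ occurring only for $n=2$, which yields $11,11,11,11,11$). Hence every fundamental tuple is a collection of rectangular partitions perturbed at no more than two points by total deficit $2$; classifying the deficit-$1$ and deficit-$2$ shapes (namely $(2^a,1)$ and $(3^a,1),(3^a,2),(2^a,1,1)$) and re-imposing $S_1=(p-2)n$ yields the inequality $n(p-4)\le 12$, which bounds $n$ for $p\ge 5$ and reduces $p=3,4$ to short direct searches. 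Carrying out this bounded bookkeeping produces precisely the thirteen tuples listed.

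Finally I would confirm that each of the thirteen is actually realized by an irreducible Fuchsian system, i.e. that the corresponding Deligne--Simpson problem is solvable; this holds because fundamental roots are always realizable, Crawley-Boevey's criterion $p(\alpha)=2>\sum_k p(\beta_k)$ being satisfied for every decomposition into positive roots. The main obstacle is the completeness of the enumeration: one must be sure the bounds above are sharp and that no admissible $(n,p)$ --- in particular the high-rank cases up to $n=12$ --- is overlooked, and that the uniqueness of the fundamental-domain representative genuinely excludes coincidences among the listed orbits. I expect the careful bookkeeping of the deficit-$2$ perturbations across all admissible $(n,p)$, rather than any single conceptual difficulty, to be where the real work lies.
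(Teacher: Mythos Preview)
The paper does not prove this theorem; it is quoted as a result of Oshima and used as input for the subsequent isomonodromy classification. There is therefore no proof in the paper to compare against.

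That said, your outline is essentially the argument Oshima (and, in the quiver language, Crawley-Boevey) actually uses: encode spectral types as dimension vectors on a star-shaped quiver, identify Katz's operations with Weyl reflections so that $\operatorname{rig}=(\alpha,\alpha)$ is preserved, and reduce to listing the roots with $(\alpha,\alpha)=-2$ in the fundamental chamber. Your observation that the leg inequalities are automatic and only the central inequality $\sum_i\lambda_1^{(i)}\le(p-2)n$ survives is correct, as is the deficit analysis $\sum_i\delta_i=2$ and the classification of deficit-$1$ and deficit-$2$ shapes.

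One point deserves tightening. The bound you state, $n(p-4)\le12$, does not follow from your set-up and in any case says nothing for $p=3$, where the largest rank $n=12$ in the list occurs. The estimates your argument actually yields are $(p-3)n\le6$ in the $(2,0,0)$ deficit pattern and $(p-2)n\le8$ in the $(1,1,0)$ pattern (using $d_i\le n/2$ for the rectangular pieces and $\lambda_1\le3$ for the perturbed ones). Neither bounds $n$ when $p=3$ and the pattern is $(2,0,0)$; there one must use the divisibility constraints $d_1,d_2\mid n$ together with $d_1+d_2=n-\lambda_1^{(3)}\in\{n-2,n-3\}$. For instance, if $d_1=n/2$ then $d_2=n/2-\ell$ with $\ell\in\{2,3\}$ must divide $n$, whence $n/2-\ell\mid 2\ell$ and $n\le6\ell\le18$; the remaining cases are handled similarly and cap out at $n=12$. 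You correctly flag this Diophantine bookkeeping as ``where the real work lies''; it is indeed finite, but it is not quite the ``short direct search'' your wording suggests, and the specific inequality you wrote should be replaced by the case-by-case bounds above.
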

\begin{rem}
Note that the equation of type $11,11,11,11,11$ has five singular points, and the next three types have four singular points, and the rest nine types have three singular points.
The equation of type $11,11,11,11,11$ has two singularities
 to deform after fixing three of the singularities to $0,1,\infty$.
The next three types of equations with four singularities have one singularity 
 to deform after fixing three of the singularities to $0,1,\infty$.
The nine equations with only three singularities do not admit continuous isomonodromic deformation.
\end{rem}
The invariance of the isomonodromic deformation equation by Katz' operations is guaranteed by Haraoka-Filipuk~\cite{MR2363425}.
Sakai derived explicit Hamiltonians of the above four equations with four accessory parameters.
\begin{thm}[Sakai~\cite{sakai_imd}]
There are four 4-dimensional Painlev\'e-type equations governed by Fuchsian equations.
\begin{itemize}
\item The Garnier system in two variables (11,11,11,11,11).  %
\item The Fuji-Suzuki system (21,21,111, 111).
\item The Sasano system (31,22,22,1111). 
\item The Sixth Matrix Painlev\'e equation of size 2 (22,22,22,211).
\end{itemize}
\end{thm}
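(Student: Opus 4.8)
The plan is to combine Oshima's classification (stated above) with a deformation-theoretic criterion for isomonodromy, and then to carry out the reduction to a Hamiltonian system type by type. Just as the two-dimensional Painlev\'e equation $P_{\rm VI}$ arises from the unique two-accessory-parameter Fuchsian type $11,11,11,11$ (Kostov's theorem together with the remark following it), a four-dimensional Painlev\'e-type equation governed by a Fuchsian equation must come from a Fuchsian system with four accessory parameters, since in this setting the number of accessory parameters equals the dimension of the moduli space of connections, which is the phase space of the resulting isomonodromic deformation equation. By Oshima's theorem there are precisely $13$ such irreducible types up to Katz's operations (additions and middle convolutions). Theorem~\ref{HF} (Haraoka--Filipuk) guarantees that the isomonodromic deformation equation is an invariant of the Katz equivalence class, so it suffices to treat a single representative of each of these $13$ types, and to identify the output with a well-defined Painlev\'e-type equation.

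First I would apply the criterion for the existence of a nontrivial isomonodromic deformation: the singular points of a Fuchsian equation on $\pr^1$ can be normalized by a M\"obius transformation so that three of them sit at $0,1,\infty$, so a continuous deformation of the singular configuration exists if and only if there are at least four singular points. Reading off the number of singular points from each spectral type in Oshima's list, exactly one type, $11,11,11,11,11$, has five singular points (hence two deformation times $t_1,t_2$); exactly three types, $21,21,111,111$, $31,22,22,1111$ and $22,22,22,211$, have four singular points (hence one deformation time $t$); and the remaining nine types have only three singular points and therefore admit no deformation at all. This isolates the four candidate systems and gives the upper bound ``at most four''.

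Next, for each of these four types I would construct the Hamiltonian explicitly, thereby proving existence and pinning down the identification. Writing the Fuchsian system as
\[
\frac{dY}{dx}=\left(\sum_{i}\frac{A_i}{x-u_i}\right)Y,
\]
with residue matrices $A_i$ whose eigenvalue multiplicities and Jordan structure are prescribed by the partitions in the spectral type, I would fix the local exponents and realize the four-dimensional moduli space of such connections modulo overall conjugation, equipped with its natural symplectic structure from the Jimbo--Miwa--Ueno theory. The key step is to introduce canonical coordinates $(q_1,q_2,p_1,p_2)$ on this moduli space --- for instance via the apparent-singularity (Okamoto-type) coordinates of the associated scalar equation, with the accessory parameters serving as the conjugate momenta. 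The deformation with respect to each time then becomes a Hamiltonian flow whose Hamiltonian is computed from the connection data, and after simplification I would match the resulting system with the Garnier system in two variables, the Fuji--Suzuki system, the Sasano system, and the sixth matrix Painlev\'e equation of size $2$, respectively.

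The main obstacle is this last step, and specifically the non-generic spectral types. For the partitions with repeated eigenvalues --- $21$, $22$, $31$ and $211$ --- the residue matrices lie in degenerate (non-regular) coadjoint orbits, so setting up clean canonical coordinates and reducing the Schlesinger equations to a genuinely Hamiltonian form is delicate; moreover the final change of variables identifying each system with its previously known normal form (Fuji--Suzuki, Sasano) is nontrivial and is where most of the real work lies. By contrast, verifying the accessory-parameter count and applying the singular-point criterion to Oshima's list are routine.
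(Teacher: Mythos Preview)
Your proposal is correct and follows exactly the argument the paper sketches around this cited theorem: Oshima's classification gives the $13$ types, the remark immediately following it applies the M\"obius-fixing criterion to discard the nine types with only three singular points, and the theorem itself records that Sakai derived the explicit Hamiltonians for the four surviving types. Note that the paper does not actually prove this result --- it is quoted from \cite{sakai_imd} --- so the explicit construction of canonical coordinates and the identification with the named systems that you flag as the ``main obstacle'' is precisely the content of Sakai's paper rather than something proved here.
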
 
In this paper, we call these 4 equations derived from Fuchsian equation the  ``source equations''.
\subsubsection{Degeneration scheme of 4-dimensional Painlev\'e-type equations}
Other 4-dimensional Painlev\'e-type equations we consider are derived from these source equations by degeneration process.
The degenerations corresponding to unramified linear equations are treated in Kawakami-Nakamura-Sakai~\cite{kns}.
The degenerations corresponding to ramified linear equations are treated in Kawakami~\cite{2016arXiv160803927K,2016arXiv160905263K,MR3740334}. 
There are 40 types of the 4-dimensional Painlev\'e-type equations with 16 partial differential equations corresponding to the differential Garnier equations and 24 ordinary differential equations.
Among 24 ordinary differential equations, 8 types are the matrix Painlev\'e equations.
According to~\cite{Rainsprivate},
the 7 types with the source equation 
$H_{\mathrm{Ss}}^{D_6}$ correspond to the symmetric $q$-difference Garnier equations,
 and the 9 types with the source equation 
 $H_{\mathrm{FS}}^{A_5}$ 
 correspond to the nonsymmetric $q$-difference
 Garnier equations in Rains~\cite{2013arXiv1307.4033R}.

\begin{table}[H]
\centering
\begin{tabular}{|c|c|c|c|c|} \hline 
& & Fuchsian & Non-Fuchsian  
&
$\#$
 \\ \hline\hline
&PDE & $H_{\mathrm{Gar}}^{1+1+1+1+1}$ & $H_{\mathrm{Gar}}^{2+1+1+1}$ \ $H_{\mathrm{Gar}}^{3+1+1}$ \ 
$H_{\mathrm{Gar}}^{2+2+1}$
$H_{\mathrm{Gar}}^{2+\frac{3}{2}+1}$\ $H_{\mathrm{Gar}}^{\frac{3}{2}+\frac{3}{2}+1}$\ $H_{\mathrm{Gar}}^{\frac{5}{2}+2}$ 
\rule[0mm]{0mm}{6mm}
&

\\
&
&
&
$H_{\mathrm{Gar}}^{4+1}$ \ $H_{\mathrm{Gar}}^{3+2}$ \ 
$H_{\mathrm{Gar}}^{5}$
\
$H_{\mathrm{Gar}}^{\frac{7}{2}+1}$
\
$H_{\mathrm{Gar}}^{\frac{3}{2}+3}$
\rule[0mm]{0mm}{5mm}
&
16
\\
Garnier
&& & $H_{\mathrm{Gar}}^{\frac{3}{2}+1+1+1}$ \ $H_{\mathrm{Gar}}^{\frac{5}{2}+1+1}$\
 $H_{\mathrm{Gar}}^{\frac{5}{2}+\frac{3}{2}}$\ $H_{\mathrm{Gar}}^{\frac{9}{2}}$
 \rule[0mm]{0mm}{5mm}
 &
\\ \cline{2-5}
&
ODE
& $H_{\mathrm{FS}}^{A_{5}}$ &  $H_{\mathrm{FS}}^{A_{4}}$ \ %
$H_{\mathrm{FS}}^{A_3}$
\ $H_{\mathrm{NY}}^{A_5}$
\ $H_{\mathrm{NY}}^{A_{4}}$\
 $H_{\mathrm{Suz}}^{2+\frac{3}{2}}$ \ $H_{\mathrm{KFS}}^{2+\frac{4}{3}}$ \ $H_{\mathrm{KFS}}^{\frac{3}{2}+\frac{4}{3}}$ \ $H_{\mathrm{KFS}}^{\frac{4}{3}+\frac{4}{3}}$
 \rule[0mm]{0mm}{6mm}
 &
 9
 \\
 \cline{3-5}
&  & $H_{\mathrm{Ss}}^{D_{6}}$ & $H_{\mathrm{Ss}}^{D_{5}}$ \ $H_{\mathrm{Ss}}^{D_{4}}$\ 
$H_{\mathrm{KSs}}^{2+\frac{3}{2}}$ \ $H_{\mathrm{KSs}}^{2+\frac{4}{3}}$ \ $H_{\mathrm{KSs}}^{2+\frac{5}{4}}$ \ $H_{\mathrm{KSs}}^{\frac{3}{2}+\frac{5}{4}}$
\rule[0mm]{0mm}{6mm}
&
7  \\
\hline
Matrix
&
ODE
&
 $H_{\mathrm{VI}}^{\rm Mat}$ 
 &
  $H_{\mathrm{V}}^{\rm Mat}$\
  $H_{\mathrm{IV}}^{\rm Mat}$ \
   $H_{\mathrm{II}}^{\rm Mat}$ \
    $H_{\mathrm{III}(D_{6})}^{\rm Mat}$ \ 
$H_{\mathrm{III}(D_{7})}^{\rm Mat}$\
 $H_{\mathrm{III}(D_{8})}^{\rm Mat}$ \
  $H_{\mathrm{I}}^{\rm Mat}$
  \rule[0mm]{0mm}{5mm}
  &
  8
  \\ \hline
\end{tabular}
\caption{The list of the 4-dimensional Painlev\'e-type equations}
\end{table}

As shown in the following diagram~\cite{MR3740334},
there are 4 series of degeneration diagram corresponding to 4 ``source equations''.
Explicit forms of the Hamiltonians and the Lax pairs can be found in ~\cite{kns,2016arXiv160803927K, 2016arXiv160905263K, MR3740334}.

{\fontsize{11pt}{-3pt}
\begin{xy}
			{(0,0) 
				*{\begin{tabular}{c}
						$H_{\mathrm{Gar}}^{1+1+1+1+1}$
					\end{tabular}
				}
			},
	{(0,-24) 
					*{\begin{tabular}{c}
							$H_{\mathrm{FS}}^{A_5}$
						\end{tabular}
					}
				},
	{(0,-40) 
						*{\begin{tabular}{c}
								$H_{\mathrm{Ss}}^{D_6}$
							\end{tabular}
						}
					},					
		{\ar (7,0);(14,0)},	
		{\ar (7,-24);(14,-24)},
		{\ar (7,-24);(14,0)},	
		{\ar (7,-24);(14,-32)},		
		{\ar (7,-40);(14,-40)},	
		{\ar (7,-40);(14,-32)},	
			{(22,0) 
					*{\begin{tabular}{c}
							$H_{\mathrm{Gar}}^{2+1+1+1}$
								\end{tabular}
							}
						},
			{(22,-24) 
					*{\begin{tabular}{c}
							$H_{\mathrm{FS}}^{A_4}$
					\end{tabular}
									}
								},		
			{(22,-32) 
				*{\begin{tabular}{c}
				$H_{\mathrm{NY}}^{A_5}$
				\end{tabular}
												}
											},
			{(22,-40) 
				*{\begin{tabular}{c}
				$H_{\mathrm{Ss}}^{D_5}$
				\end{tabular}
							}										},													
			{\ar (30,0);(36,7)},
			{\ar (30,0);(36,0)},
			{\ar (30,0);(36,-7)},
			{\ar (30,-24);(36,7)},
			{\ar (30,-24);(36,-32)},	
			{\ar (30,-24);(36,-24)},
			{\ar (30,-32);(36,-32)},
			{\ar (30,-32);(36,-7)},
			{\ar (30,-40);(36,-40)},		
			{\ar (30,-40);(36,-32)},				
			{(44,8) 
									*{\begin{tabular}{c}
									$H_{\mathrm{Gar}}^{3+1+1}$
										\end{tabular}
										}
										},				
			{(44,0) 
						*{\begin{tabular}{c}
						$H_{\mathrm{Gar}}^{2+2+1}$
							\end{tabular}
							}
							},		
			{(46,-8) 
						*{\begin{tabular}{c}
						$H_{\mathrm{Gar}}^{\frac{3}{2}+1+1+1}$
										\end{tabular}
										}
										},	
			{(46,-24) 
						*{\begin{tabular}{c}
						$H_{\mathrm{FS}}^{A_3}$
						\end{tabular}
									}
									},	
			{(46,-32) 
						*{\begin{tabular}{c}
						$H_{\mathrm{NY}}^{A_4}$
						\end{tabular}
									}
									},		
		{(46,-40) 
								*{\begin{tabular}{c}
								$H_{\mathrm{Ss}}^{D_4}$
								\end{tabular}
											}
											},																				
		{\ar (55,7);(63,11)},	
		{\ar (55,7);(63,4)},
		{\ar (55,7);(63,-4)},
		{\ar (55,0);(63,11)},	
		{\ar (55,0);(63,4)},	
		{\ar (55,0);(63,-11)},		
		{\ar (55,-7);(63,-4)},	
		{\ar (55,-7);(63,-11)},	
		{\ar (55,-24);(63,4)},
		{\ar (55,-24);(63,-4)},
		{\ar (55,-24);(63,-24)},
		{\ar (55,-32);(63,-4)},	
		{\ar (55,-40);(63,-40)},
		{\ar (55,-40);(63,-4)},								
		{(68,12) 
						*{\begin{tabular}{c}
						$H_{\mathrm{Gar}}^{4+1}$				\end{tabular}
								}
										},				
		{(68,4) 
								*{\begin{tabular}{c}
								$H_{\mathrm{Gar}}^{3+2}$
									\end{tabular}
									}
									},		
		{(70,-4) 
					*{\begin{tabular}{c}
					$H_{\mathrm{Gar}}^{\frac{5}{2}+1+1}$
								\end{tabular}
									}
									},	
		{(71,-12) 
							*{\begin{tabular}{c}
							$H_{\mathrm{Gar}}^{2+\frac{3}{2}+1}$
										\end{tabular}
											}
											},
	{(70,-24) 
				*{\begin{tabular}{c}
				$H_{\mathrm{Suz}}^{2+\frac{3}{2}}$
											\end{tabular}
												}
												},
	{(70,-40) 
			*{\begin{tabular}{c}
			$H_{\mathrm{KSs}}^{2+\frac{3}{2}}$
										\end{tabular}
													}
													},																					
		{\ar (77,12);(86,16)},	
		{\ar (77,12);(86,8)},	
		{\ar (77,4);(86,16)},	
	    {\ar (77,4);(86,0)},	
	    {\ar (77,4);(86,-8)},		
	    {\ar (77,-4);(86,8)},	
	    {\ar (77,-4);(86,-8)},
	    {\ar (77,-12);(86,0)},		
	    {\ar (77,-12);(86,-8)},		
	    {\ar (77,-12);(86,-16)},
	    {\ar (77,-24);(86,-24)},
	    {\ar (77,-24);(86,16)},		
	    {\ar (77,-40);(86,-40)},				
		{(91,16) 
						*{\begin{tabular}{c}
						$H_{\mathrm{Gar}}^{5}$
						\end{tabular}
										}
														},		
		{(91,8) 
					*{\begin{tabular}{c}
					$H_{\mathrm{Gar}}^{\frac{7}{2}+1}$
											\end{tabular}
											}
											},		
		{(91,0) 
					*{\begin{tabular}{c}
					$H_{\mathrm{Gar}}^{3+\frac{3}{2}}$
						\end{tabular}
									}
									},	
		{(91,-8) 
				*{\begin{tabular}{c}
				$H_{\mathrm{Gar}}^{\frac{5}{2}+2}$
					\end{tabular}
								}
							},
		{(93,-16) 
						*{\begin{tabular}{c}
						$H_{\mathrm{Gar}}^{\frac{3}{2}+\frac{3}{2}+1}$
							\end{tabular}
										}
									},
		{(91,-24) 
			*{\begin{tabular}{c}
				$H_{\mathrm{KFS}}^{2+\frac{4}{3}}$
							\end{tabular}
										}
									},	
	{(91,-40) 
		*{\begin{tabular}{c}
			$H_{\mathrm{KSs}}^{\frac{3}{2}+\frac{3}{2}}$
								\end{tabular}
											}
										},															
	{\ar (100,16);(108,8)},
	{\ar (100,8);(108,8)},
	{\ar (100,0);(108,8)},
	{\ar (100,0);(108,-8)},
	{\ar (100,-8);(108,8)},
	{\ar (100,-8);(108,-8)},									{\ar (100,-16);(108,-8)},	
	{\ar (100,-24);(108,-24)},	
	{\ar (100,-40);(108,-40)},	
	{(113,8) 
						*{\begin{tabular}{c}
						$H_{\mathrm{Gar}}^{\frac{9}{2}}$
												\end{tabular}
												}
												},	
	{(113,-8) 
					*{\begin{tabular}{c}
					$H_{\mathrm{Gar}}^{\frac{5}{2}+\frac{3}{2}}$
						\end{tabular}
				}
				},					
	{(113,-24) 
			*{\begin{tabular}{c}								$H_{\mathrm{KFS}}^{\frac{3}{2}+\frac{4}{3}}$
			\end{tabular}												}	
					},	
	{(113,-40) 
		*{\begin{tabular}{c}								$H_{\mathrm{KSs}}^{2+\frac{5}{4}}$
				\end{tabular}												}	
						},		
	{\ar (120,-24);(128,-24)},	
	{\ar (120,-40);(128,-40)},									
	{(133,-24) 
		*{\begin{tabular}{c}								$H_{\mathrm{KFS}}^{\frac{4}{3}+\frac{4}{3}}$
			\end{tabular}												}	
						},	
	{(133,-40) 
			*{\begin{tabular}{c}								$H_{\mathrm{KSs}}^{\frac{3}{2}+\frac{5}{4}}$
				\end{tabular}												}	
							},													
			{(0,-55) 
				*{\begin{tabular}{c}
						$H^{\mathrm{VI}}_{\mathrm{Mat}}$
					\end{tabular}
				}
			},	
		{\ar (5,-55);(15,-55)},		
			{(23,-55) 
				*{\begin{tabular}{c}
					$H^{\mathrm{V}}_{\mathrm{Mat}}$
								\end{tabular}
					}
			},	
		{\ar (28,-55);(38,-50)},	
		{\ar (28,-55);(38,-60)},	
			{(46,-50) 
				*{\begin{tabular}{c}
				$H^{\mathrm{III}(D_6)}_{\mathrm{Mat}}$
											\end{tabular}
								}
						},	
			{(46,-60) 
				*{\begin{tabular}{c}
				$H^{\mathrm{IV}}_{\mathrm{Mat}}$
				\end{tabular}
											}
									},	
		{\ar (53,-50);(61,-50)},
		{\ar (53,-50);(61,-60)},	
		{\ar (53,-60);(61,-60)},						
		{(69,-50) 
			*{\begin{tabular}{c}
			$H^{\mathrm{III}(D_7)}_{\mathrm{Mat}}$
			\end{tabular}
										}
								},	
		{(69,-60) 
			*{\begin{tabular}{c}
			$H^{\mathrm{II}}_{\mathrm{Mat}}$
			\end{tabular}
													}
											},	
		{\ar (76,-50);(85,-50)},
		{\ar (76,-50);(85,-60)},	
		{\ar (76,-60);(85,-60)},									
	{(92,-50) 
				*{\begin{tabular}{c}
				$H^{\mathrm{III}(D_8)}_{\mathrm{Mat}}$
				\end{tabular}
											}
									},	
	{(92,-60) 
				*{\begin{tabular}{c}
				$H^{\mathrm{I}}_{\mathrm{Mat}}$
				\end{tabular}
														}
												},																					
\end{xy}	
}

\begin{rem}
The names of these Hamiltonians are temporal.
As Sakai~\cite{MR1882403} labeled the 2-dimensional systems by the  types of the anticanonical divisors of the compactified spaces of initial conditions,
it may be natural to label the  4-dimensional systems from geometrical characterization. 
\end{rem}

\begin{rem}
While Kawakami-Nakamura-Sakai~\cite{kns} studied the degeneration from Fuchsian types, the theory of unramified non-Fuchsian linear equations has developed.
Hiroe and Oshima  classified all the unramified linear equations with 4 accessory parameters up to some transformations~\cite[Thm3.29]{MR3077686}.
Yamakawa proved that analogous theorem of Haraoka-Filpuk~\cite{MR2363425} holds for unramified non-Fuchsian equations~\cite{MR3642539}.
By comparing the results,
 all the unramified equations with 4 accessory parameters come from the 4 Fuchsian source equations by degenerations, and the list of 4-dimensional Painlev\'e-type equations corresponding to the unramified linear equations is complete.
 To the author's knowledge, it is still an open question whether linear equations of ramified type with 4 accessory parameters can be reduce to this list of 40 types.
 Rains' work~\cite{RainsTexas} might be conclusive in this direction.
\end{rem}

\begin{rem}
According to the classification of the Lax pairs by
Rains~\cite{RainsTexas}, there are 40 types of 4-dimensional families which admit continuous deformations, comprising of 16 differential Garnier equations, 7 symmetric $q$-difference Garnier equations, 9 nonsymmetric $q$-Garnier equations and 8 matrix Painlev\'e equations.
The numbers  match with Kawakami's list~\cite{MR3740334}.
\end{rem}

\begin{rem}
Some of these 40 equations look similar to each other.
For instance, H.~Chiba pointed out that $H_{\mathrm{Gar},t_1}^{4+1}$ and $\tilde{H}_{\rm II}^{\rm Mat}$ look almost the same after a symplectic transformations.\footnote{
The following transformation to the $H_{\rm II}^{\rm Mat}$ in \cite{kns}
 yields $\tilde{H}_{\rm II}^{\rm Mat}$:
$
p_1\to 2 p_1,q_1\to \frac{q_1}{2},p_2\to -q_2,q_2\to p_2,\kappa _1\to -\theta _0-\kappa _2.
$
}
\begin{align*}
	H_{\mathrm{Gar},t_1}^{4+1}
	=
	&
	p_1^2
	- \left(q_1^2+t_1\right)p_1
	+\kappa _1 q_1
	+p_1 p_2
	+p_2 q_2 \left(q_1-q_2+t_2\right)
	+\theta _0 q_2,
	\\
	\tilde{H}_{\rm II}^{\rm Mat}
	=
	&
	p_1^2
	- \left(\frac{q_1^2}{4}+t\right)p_1
	- \left(\theta_0+\frac{\kappa _2}{2}\right)q_1
	+p_1 p_2
	+p_2q_2 \left(q_1-q_2\right) 
	+\theta_0q_2.
\end{align*}
One of the key motivations of the present paper is to geometrically distinguish such cases.
We will show in chapter~\ref{sec:spectral} that the types of generic degeneration of spectral curves are different.\footnote{It is hoped to prove a theorem stating ``If the types of generic degeneration of the spectral curves are different, the corresponding space of initial conditions are not isomorphic, or, do not admit biregular map from one another.''}
\end{rem}
\begin{rem}
Some of the equations in the list, such as the Noumi-Yamada systems,  had been known from different context. 
See \cite{kns} for the references for other derivations.
\end{rem}

\section{Autonomous limit of Painlev\'e type equations}\label{sec:auto}
In the previous section, we saw that there are 40 types of 4-dimensional Painlev\'e-type equations.
In this section,
we consider the autonomous limit of these 40  equations
 by taking the isospectral limit of the isomonodromic deformation equations.
Using the Lax pair, we obtain two functionally independent invariants for each system.
Therefore, the autonomous limit of 4-dimensional Painlev\'e-type equations are integrable in Liouville's sense.

\subsection{Integrable system and Lax pair representation}
Let us  recall the definition of integrability in Liouville's sense. 
A Hamiltonian system is a triple $(M,\omega,H)$, where $(M,\omega)$ is a symplectic manifold and $H$ is a Hamiltonian function on $M$; $\iota_{X_H}\omega=dH$.
A function $f$ Poisson commutes with the Hamiltonian $H$, that is $\{f,H\}=0$, if and only if $f$ is constant along integrable curve of the Hamiltonian vector field $X_{H}$.
Such a function $f$ is called a conserved quantity or first integral.
A Hamiltonian system is (completely) integrable in Liouville's sense if it possesses $n\coloneqq\frac{1}{2}\operatorname{dim}M$ independent integrals of motion, $f_1=H,$ $f_2,\dots,f_n$, which are pairwise in involution with respect to the Poisson bracket; $\{f_i,f_j\}=0$ for all $i,j$.
This definition of integrability is motivated by Liouville's theorem.
Let  $(M,\omega,H)$ be a real integrable system of dimension $2n$ with integral of motion $f_1,\dots,f_n$, and let $c\in\R^n$ be a regular value of $f=(f_1,\dots,f_n)$.
Liouville's theorem states that any compact component of the level set $f^{-1}(c)$ is a torus. 
The complex Liouville theorem is also known~\cite{MR2095251}.

Many integrable systems are known to have Lax pair expressions:
\begin{align}\label{eq:Lax}
\frac{dA(x)}{dt}+[A(x),B(x)]=0,
\end{align}
where $A(x)$ and $B(x)$ are $m$ by $m$ matrices and $x$ is a spectral parameter.
From this differential equation, $\operatorname{tr}\left(A(x)^k\right)$ are conserved quantities of the system:
\begin{align*}
\frac{d}{dt}\operatorname{tr}\left(A(x)^k\right)
=&\operatorname{tr}\left(k\left[B(x),A(x)\right]A(x)^{k-1}\right)
= 0.
\end{align*}
Therefore, the eigenvalues of $A(x)$ are all conserved quantities since the coefficients of the characteristic polynomial are expressible in terms of these traces.
In fact, the Lax pair is equivalent to the following isospectral problem:
\begin{align*}
\begin{dcases}
A(x)= Y A_0(x)Y^{-1},\\
\frac{d Y}{d t}=B(x)Y,
\end{dcases}
\end{align*}
where $A_0(x)$ is a matrix satisfying $\frac{d A_0(x)}{dt}=0$.
The curve defined by the characteristic polynomial is called  the spectral curve:
\begin{align*}
\operatorname{det}\left(yI_{m}-A(x)\right)=0.
\end{align*}
\subsection{Isomonodromic deformation to isospectral deformation}
The isomonodromic problems have the following forms:
\begin{align*}
\begin{dcases}
\frac{\partial Y}{\partial x}=A(x,t)Y, \\ 
\frac{\partial Y}{\partial t}=B(x,t)Y,
\end{dcases}
\end{align*}
and the deformation equation is expressed as
\begin{align}\label{eq:deltaLax}
\frac{\partial A(x,t)}{\partial t}-\frac{\partial B(x,t)}{\partial x}+[A(x,t),B(x,t)]=0.
\end{align}
We find the similarities in isospectral and isomonodromic problems; the only difference is the existence of the term $\frac{\partial B}{\partial x}$ in isomonodromic deformation equation.
In fact, we can consider isospectral problems as the special limit of isomonodromic problem with a parameter $\delta$.
We restate the isomonodromic problem as follows\footnote{Here $\delta$ plays the role of $\lambda$ in Deligne's lambda connections~\cite{MR1492538}. In some literature such as Levin-Olshanetsky-Zotov~\cite{MR2249796}, it is customary to use $\kappa$ instead of $\delta$.}:
\begin{align*}
\begin{dcases}
\delta\frac{\partial Y}{dx} = A(x,\tilde{t})Y, \\ 
\frac{\partial Y}{\partial t} = B(x,\tilde{t})Y,
\end{dcases}
\end{align*}
where $\tilde{t}$ is a variable which satisfies $\frac{d\tilde{t}}{dt}=\delta$.
The integrability condition $\frac{\partial^2 Y}{\partial x\partial t}=\frac{\partial^2 Y}{\partial t\partial x}$ is equivalent to the following:
\begin{align}\label{eq:delta-deform}
\frac{\partial A(x,\tilde{t})}{\partial t}-\delta\frac{\partial B(x,\tilde{t})}{\partial x}+[A(x,\tilde{t}),B(x,\tilde{t})]=0.
\end{align}
The case when $\delta=1$ is the usual one.\footnote{Adams-Harnad-Hurtubise~\cite{MR953833} and Adams-Harnad-Previato~\cite{MR1077962} studied finite dimensional integrable systems by embedding them into rational coadjoint orbits of loop algebras.
Harnad~\cite{MR1309553} further generalized their theory as applicable to the isomonodromic systems.
Such nonautonomous isomonodromic systems are obtained by
  identifying the time flows of the integrable system with parameters determining the moment map.}
When $\delta=0$, the term $\delta\frac{\partial B}{\partial x}$ drops off from the deformation equation and we have a Lax pair in a narrow sense.\footnote{In other words, we mean a Lax pair in the sense of integrable systems. The isomonodromic problems are often called as Lax pairs, but they do not give first integrals.}
The deformation equation~\ref{eq:deltaLax} with $\delta$ is solved by a Hamiltonian $H(\delta)$.
When $\delta=1$, the Hamiltonian $H(1)$ is equal to the original Hamiltonian of the isomonodromic problem.
Therefore, $H(\delta)$ is a slight modification of the Hamiltonian.
When $\delta=0$, $H(0)$ is a conserved quantity of the system.
\begin{rem}
Isomonodromic equations are flows on moduli space of connections~\cite{MR2289083}.
Isospectral limit correspond to $\lambda\to 0$ limit of moduli of $\lambda$-connections~\cite{MR1492538} to moduli of Higgs bundles.
The Painlev\'e-type equations become the Hitchin systems~\cite{MR885778} at the limit. 
\end{rem}

Taking the isospectral limit of 8 types of 2-dimensional Painlev\'e equations, we can state the following classically-known result.
\begin{prop}
As the autonomous limits of 2-dimensional Painlev\'e equations, we obtain 8 types of integrable systems
with a first integral for each system\footnote{These first integrals are the autonomous Hamiltonians. They are rational in the phase variables $q,p$.}.
\end{prop}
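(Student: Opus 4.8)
The plan is to verify the claim concretely, equation by equation, using the Hamiltonian forms listed in Section~\ref{sec:4-dim} together with the isospectral-limit procedure explained above. First I would fix the meaning of the "autonomous limit": for each of the eight Painlev\'e equations $P_{\rm J}$, take the $\delta\to 0$ degeneration of the $\delta$-deformed Lax pair, which (as explained in the discussion of \eqref{eq:delta-deform}) replaces the isomonodromic system by a genuine Lax pair $\frac{dA(x)}{dt}+[A(x),B(x)]=0$. Concretely this amounts to freezing the explicit $t$-dependence in the Hamiltonians $H_{\rm J}$ listed above, so that the resulting autonomous Hamiltonian is a fixed rational function of $(q,p)$; this rational function is exactly the conserved quantity $H(0)$ of the limiting system. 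So the work splits into eight essentially parallel verifications.

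Next, for each $P_{\rm J}$ I would exhibit the autonomous Hamiltonian $\widehat H_{\rm J}(q,p)$ obtained this way and check two things: that it is nonconstant (so it is a genuine first integral, not a trivial one), and that the corresponding autonomous flow is indeed solvable by elliptic functions, which is the content of "integrable system" here. For a $2$-dimensional phase space a single nonconstant first integral already gives Liouville integrability (here $n=\tfrac12\dim M=1$), so the involutivity condition $\{f_i,f_j\}=0$ is vacuous and the only real content is the existence and independence of $\widehat H_{\rm J}$. The elliptic-solvability can be read off either from the fact that the generic level curve $\widehat H_{\rm J}(q,p)=h$ is a curve of arithmetic genus one (an elliptic curve), so the flow linearizes on its Jacobian, or directly by reducing $dt = dq/(\partial \widehat H_{\rm J}/\partial p)$ to an elliptic integral after solving $\widehat H_{\rm J}=h$ for $p$. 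I would do this once in detail for, say, $P_{\rm I}$ (where $\widehat H_{\rm I}=p^2-q^3-tq$ with $t$ frozen, and $dt=dq/(2p)$ with $p^2=q^3+t q+h$ is manifestly elliptic) and then indicate that the remaining cases $P_{\rm II},P_{\rm IV},P_{\rm III}^{D_6},P_{\rm III}^{D_7},P_{\rm III}^{D_8},P_{\rm V},P_{\rm VI}$ are handled identically, the level curves all being (possibly singular or degenerate) cubics or quartics of genus one after the standard birational normalization.

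The step I expect to be the main obstacle is the uniform treatment of the more degenerate cases, especially $P_{\rm III}^{D_7}$, $P_{\rm III}^{D_8}$ and the $t/q$-type terms: there one must be slightly careful that "freezing $t$" still produces a Hamiltonian system whose generic invariant curve is irreducible of genus one rather than degenerating further, and that the apparent poles in $q$ do not obstruct solvability; this is exactly the phenomenon that in the refined statement (Theorem~\ref{thm:genus1spectral}) shows up as the more degenerate Kodaira fibers ${\rm I}_2^*,{\rm I}_3^*,{\rm I}_4^*$ at $H=\infty$. The cleanest way around this is to appeal to the Lax-pair/spectral-curve description rather than to the bare Hamiltonian: the spectral curve $\det(yI_m-A(x))=0$ is by construction preserved by the flow, its coefficients give the conserved quantity $\widehat H_{\rm J}$, and for the $2$-dimensional Painlev\'e Lax pairs this curve is a plane curve of geometric genus one, so the flow linearizes on its (generalized) Jacobian regardless of how the affine model degenerates. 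Invoking this, the proposition reduces to the bookkeeping of checking, case by case, that the spectral curve has genus one and that $\widehat H_{\rm J}$ is nonconstant and rational in $q,p$ — which is routine given the explicit Lax pairs.
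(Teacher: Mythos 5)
Your proposal is correct and follows essentially the same route as the paper: pass to the $\delta$-deformed Lax pair, set $\delta=0$ so the autonomous Hamiltonian (with $\tilde t$ frozen and the $\delta$-shifts in the parameters removed) becomes a conserved quantity, and note that one nonconstant integral on a $2$-dimensional phase space already gives Liouville integrability, verifying one case explicitly and declaring the rest analogous. Your additional discussion of genus-one level curves and elliptic solvability is not needed for the proposition as stated, and the only point worth tightening is that the autonomous Hamiltonian is not literally the original $H_{\rm J}$ with $t$ frozen but the $\delta=0$ member of a $\delta$-modified family (the paper calls finding this modification "the only troublesome part"), which your Lax-pair framing already handles correctly.
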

\begin{proof}
We take the second Painlev\'e equation as an example to demonstrate a proof. 
Proofs of the other equations are similar.
\begin{align*}
\begin{dcases}
\delta\frac{\partial Y}{\partial x}=A(x,\tilde{t})Y,\hspace{5mm}
A(x,\tilde{t})=\left(A_{\infty}^{(-3)}(\tilde{t})x^2+A_{\infty}^{(-2)}(\tilde{t})x+A_{\infty}^{(-1)}(\tilde{t})\right),\\
\frac{\partial Y}{\partial t}=B(x,\tilde{t})Y \hspace{5mm}
B(x,\tilde{t})=\left(A_{\infty}^{(-3)}(\tilde{t})x+B_1(\tilde{t})\right),
\end{dcases}
\end{align*}
where
\begin{align*}
& \hat{A}_{\infty}^{(-3)}(\tilde{t})=
\begin{pmatrix}
0 & 0\\
0 & 1
\end{pmatrix},\hspace{3mm}
\hat{A}_{\infty}^{(-2)}(\tilde{t})=
\begin{pmatrix}
0 & 1\\
-p+q^2+\tilde{t} & 0
\end{pmatrix},\\
& \hat{A}_{\infty}^{(-1)}(\tilde{t})=
\begin{pmatrix}
-p+q^2+\tilde{t} & q\\
(p-q^2-\tilde{t})q-\kappa_2 & p-q^2
\end{pmatrix},\hspace{2mm}
\hat{B}_{1}(\tilde{t})=
\begin{pmatrix}
q & 1\\
p-q^2-\tilde{t} & 0
\end{pmatrix},\\
& A_{\infty}^{(-i)}=U^{-1}\hat{A}_{\infty}^{(-i)}U\hspace{3mm}
{\rm for}\ i=1,2,3,\hspace{3mm}
B_1=U^{-1}\hat{B}_{1}U,\hspace{3mm}
U=\begin{pmatrix}
u & 0\\
0 & 1
\end{pmatrix}.
\end{align*}
The deformation equation (\ref{eq:delta-deform}) is equivalent to the following differential equations.
\begin{align*}
\frac{dq}{dt}= 2p-q^2-\tilde{t},\hspace{5mm}
\frac{dp}{dt}= 2pq+\delta-\kappa_1,\hspace{5mm}
\frac{du}{dt}= 0.
\end{align*}
The first two equations are equivalent to the Hamiltonian system
\[
\frac{dq}{dt}=\frac{\partial H_{\rm II}(\delta)}{\partial p},\hspace{3mm}
\frac{dp}{dt}=-\frac{\partial H_{\rm II}(\delta)}{\partial q},
\]
with the Hamiltonian 
$
H_{\rm II}(\delta):=p^2-(q^2+\tilde{t})p+(\kappa_1-\delta)q.
$
When $\delta=1$, it is the usual Hamiltonian of $H_{\rm II}$.
Moreover, when $\delta\neq 0$, we can normalize to the $\delta=1$ case.
Taking the limit $\delta\to 0$, we obtain an autonomous system with a Hamiltonian 
$
H_{\rm II}(0)=p^2-(q^2+\tilde{t})p+\kappa_1 q.
$
Since it is an autonomous system, the Hamiltonian is a first integral.
The dimension of the phase space is two,  the number of first integrals is half of the dimension.
Therefore, the autonomous second Painlev\'e equation is integrable in  Liouville's sense.
The Lax pair\footnote{We rewrite $A(x)=A(x,\tilde{t})$ and $B(x)=B(x,\tilde{t})$.}
and the spectral curve of the autonomous second Painlev\'e equation are
\begin{align*}
&\frac{d A(x)}{dt}+[A(x),B(x)]=0,
\\
&\det(yI-A(x))=y^2-(x^2+\tilde{t})y-\kappa_1 x-H_{\rm II}(0)=0.
\label{eq:spP2}
\end{align*}
\end{proof}
\begin{rem}
For the 2-dimensional cases, parameters of the Painlev\'e equations can be thought as roots of affine root systems, and $\delta$ corresponds to the null root~\cite{MR1882403,MR3077699}.
\end{rem}
\subsection{The autonomous limit of the 4-dimensional Painlev\'e-type equations}
We can also consider such autonomous limit for higher dimensional Painlev\'e-type equations.
From the coefficients of the spectral curves, we obtain first integrals.
\begin{thm}
As the autonomous limits of 4-dimensional Painlev\'e-type equations, we obtain 40 types of integrable systems
with two functionally independent first integrals for each system\footnote{These invariants are rational in the phase variables $q_1,p_1,q_2,p_2$.}.
\end{thm}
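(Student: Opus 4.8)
The plan is to carry out, uniformly over all $40$ equations, the same argument that was used above for $P_{\rm II}$. The input is that every $4$-dimensional Painlev\'e-type equation comes with an explicit isomonodromic Lax pair $\partial_x Y = A(x,t)Y$, $\partial_t Y = B(x,t)Y$ whose entries are rational in the spectral parameter $x$ with coefficients rational in $q_1,p_1,q_2,p_2$ and in the characteristic-exponent parameters; these are written out in \cite{kns,kawakami-ramified}. First I would insert the scaling parameter $\delta$ as in \eqref{eq:delta-deform}, replacing $\partial_x Y$ by $\delta\,\partial_x Y$ and introducing $\tilde t$ with $d\tilde t/dt = \delta$, so that the deformation equation becomes $\partial_t A - \delta\,\partial_x B + [A,B] = 0$. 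Inspecting the explicit Hamiltonians case by case, one checks that the associated Hamiltonian $H(\delta)$ depends polynomially — in fact affinely — on $\delta$, with $\delta$ entering only through shifts of some of the exponent parameters, exactly as $\kappa_1 \mapsto \kappa_1-\delta$ in the $P_{\rm II}$ example. Hence the limit $\delta\to 0$ is finite and produces a genuine Lax equation $dA/dt + [A,B] = 0$ together with an autonomous Hamiltonian $H(0)$.

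The second step is to read off the conserved quantities from the spectral curve. Since $dA/dt = [B,A]$, the functions $\operatorname{tr}\!\big(A(x)^k\big)$ are constant along the flow, as recalled above, and therefore so is every coefficient of the characteristic polynomial $\det\!\big(yI_m - A(x)\big) = y^m + a_1(x)y^{m-1} + \dots + a_m(x)$. Expanding the $a_i(x)$ as polynomials in $x$ yields finitely many functions on the $4$-dimensional phase space, all conserved and all rational in $q_1,p_1,q_2,p_2$. Among them one finds the autonomous Hamiltonian $H(0)$ itself, up to an affine change involving the parameters, and at least one further function functionally independent from it; I would set $H_1 = H(0)$, call the second one $H_2$, and collect the explicit expressions in Appendix~\ref{ap:con}.

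It then remains to check the two requirements of Liouville integrability for each system. Involution is essentially free: the relevant autonomous flow (the Lax flow, which for the ODE-type systems is the Hamiltonian flow of $H_1$) preserves the spectral curve, so $H_2$ is conserved along it and $\{H_1,H_2\} = X_{H_1}(H_2) = 0$; for the Garnier-type systems, which carry two commuting time flows, one instead takes $H_1,H_2$ to be the two autonomous Hamiltonians and uses the compatibility (Frobenius integrability) of the isomonodromic Pfaffian system, which in the limit says that $\{H_1,H_2\}$ vanishes. Functional independence $dH_1\wedge dH_2 \neq 0$ on a dense open set I would verify directly from the formulas in Appendix~\ref{ap:con}, e.g.\ by producing a single point of the phase space at which the two differentials are linearly independent, or by comparing the degrees of $H_1$ and $H_2$ in $(q_i,p_i)$. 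Since $\tfrac12\dim M = 2$, this gives integrability in Liouville's sense.

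The \emph{main obstacle} is the uniform, case-by-case bookkeeping rather than any single conceptual difficulty: one must confirm, for each of the $40$ systems separately, both that the $\delta\to 0$ limit of $H(\delta)$ is finite — which relies on the precise way $\delta$ enters the deformation equations in \cite{kns,kawakami-ramified} — and that expanding the corresponding spectral curve really does deliver two functionally independent conserved quantities. Once the explicit Lax pairs are in hand each verification is a finite computation, but I do not expect a shortcut that avoids going through the list.
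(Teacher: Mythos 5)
Your proposal follows essentially the same route as the paper: insert $\delta$ into the deformation equation, take $\delta\to0$ to get a genuine Lax pair, read the two conserved quantities off the coefficients of the spectral curve, and verify involution and functional independence case by case (the paper does this explicitly for $H_{\rm I}^{\rm Mat}$, checking $\{g,h\}=0$ directly and that the Jacobian of $(h,g)$ has rank $2$ at a general point, then appeals to analogous computations for the remaining systems). Your observation that the only real difficulty is the uniform bookkeeping — in particular finding the correct $\delta$-modification of each Hamiltonian — matches the paper's own remark to that effect.
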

\begin{proof}
One of the simplest 4-dimensional Painlev\'e-type equation is the first matrix Painlev\'e equation~\cite{2016arXiv160803927K}.
The linear equation is given by 
\begin{align*}
\frac{d A(x)}{dt}+[A(x),B(x)]=0,
\hspace{3mm}
 A(x)=\left(A_{0}x^2+A_{1}x+A_{2}\right),
\hspace{3mm}
B(x)=
A_{0}x+B_1.
\end{align*}
where 
\begin{align*}
& A_{0}=
\begin{pmatrix}
O_2 & I_2\\
O_2 & O_2
\end{pmatrix},\hspace{3mm}
A_{1}=
\begin{pmatrix}
O_2 & Q\\
I_2 & O_2
\end{pmatrix},\hspace{3mm}
A_{2}=
\begin{pmatrix}
-P & Q^2+\tilde{t}I_2\\
-Q & P
\end{pmatrix},
\hspace{2mm}
B_1=
\begin{pmatrix}
O_2 & 2 Q\\
I_2 & O_2
\end{pmatrix},
\\
&
 Q=
\begin{pmatrix}
q_1 & u\\
-q_2/u & q_1
\end{pmatrix}, \hspace{2mm}
P=
\begin{pmatrix}
p_1/2 & -p_2u \\
(p_2q_2-\kappa_2)/u & p_1/2
\end{pmatrix},\hspace{2mm}
O_2=\begin{pmatrix}
0 & 0\\
0 & 0
\end{pmatrix},\hspace{2mm}
I_2=
\begin{pmatrix}
1 & 0\\
0 & 1
\end{pmatrix}.
\end{align*}
The spectral curve is defined by the characteristic polynomial of the matrix $A(x)$;
\begin{align*}
{\rm det}\left(yI_4-A(x)\right)=&{y}^{4}- \left( 2\,{x}^{3}+2\tilde{t}x+h \right) {y}^{2}+{x}^{6}+2\tilde{t}{x}^{4}
+h{x}^{3}+\tilde{t}^{2}{x}^{2}+ \left(\tilde{ t} h-\kappa_2^{2} \right) x+g.
\end{align*}
The explicit forms of $h$ and $g$ are
\begin{align*}
h\coloneqq 
H_{\rm I}^{\rm Mat}=
&\operatorname{tr} \left(P^2-Q^3-\tilde{t} Q \right)
=-2 p_2 \left(p_2 q_2-\kappa
   _2\right)+\frac{p_1^2}{2}-2 q_1 \tilde{t}-2 q_1
   \left(q_1^2-q_2\right)+4 q_1 q_2,\notag\\
g\coloneqq 
G_{\rm I}^{\rm Mat}=&q_2
   \left(p_1 p_2+3 q_1^2-q_2+\tilde{t}\right){}^2
   -\kappa_2 p_1 \left(p_1 p_2+3 q_1^2-q_2+\tilde{t}\right)
   -2 \kappa_2^2 q_1.
\end{align*}
Since 
$h$ and $g$
 are coefficient of the spectral curve, they are invariants of the autonomous system.
We can also check that 
$h$ and $g$
 are conserved  by direct computation:
\[
\dot{h}=X_{h}h=\{h,h\}=0,
\hspace{3mm}
\dot{g}=X_{h}g=\{g,h\}=0,
\]
where $X_{h}$ is the Hamiltonian vector field associated to the Hamiltonian $h$.
The Poisson bracket in the above equations is defined by
\[
\{F,G\}\coloneqq\sum_{i=1}^{2}
\left(
\frac{\partial F}{\partial q_i}\frac{\partial G}{\partial p_i}
-\frac{\partial G}{\partial q_i}\frac{\partial F}{\partial p_i}
\right).
\]
Since
\[
\operatorname{rank}
\begin{pmatrix}
\frac{\partial h}{\partial q_1} 
& \frac{\partial h}{\partial p_1}
& \frac{\partial h}{\partial q_2}
& \frac{\partial h}{\partial p_2}
\\
\frac{\partial g}{\partial q_1} 
& \frac{\partial g}{\partial p_1}
& \frac{\partial g}{\partial q_2}
& \frac{\partial g}{\partial p_2}
\end{pmatrix}
=2
\]
for the  general value of $(q_1,p_1,q_2,p_2)$, we have two functionally independent invariants of the system.
Thus the autonomous Hamiltonian system with the Hamiltonian $H_{\rm I}^{\rm Mat}$ is integrable in Liouville's sense. 

From similar direct computations, we obtain the desired results for all the rest of 4-dimensional Painlev\'e-type equations.
We  list functions in involution for the ramified types in Appendix~\ref{ap:con}.
These spectral curves and conserved quantities can be calculated from the data in the papers~\cite{sakai_imd,kns,2016arXiv160803927K,2016arXiv160905263K,MR3740334}. 
The only troublesome part is to find appropriate modifications of the Hamiltonians in the presence of $\delta$.
The other parts are straightforward.
\end{proof}
\begin{rem}
It is an interesting problem to study the invariant surfaces defined by $H^{-1}(c_1)\cap G^{-1}(c_2)\subset \C^4$ for $c_1, c_2\in \C$, where $H$ and $G$ are  functionally independent invariants of the system.
As in the case of other integrable systems~\cite{MR2095251}, these Liouville tori can be completed into Abelian surfaces by adjoining the Painlev\'e divisors~\cite{nakjmm}.
\end{rem}

\section{Degeneration of spectral curves\label{sec:spectral}}
This section is the main part of this paper.
We study generic degeneration of spectral curves of the autonomous 4-dimensional Painlev\'e-type equations, aiming to characterize these systems.
\subsection{Genus 1 fibration and Tate's algorithm}
Before discussing the genus 2 cases, corresponding to the autonomous limit of 4-dimensional Painlev\'e-type equations, we discuss the genus 1 cases, corresponding to the autonomous 2-dimensional Painlev\'e equations. 

Let us recall some of the basics we need.
We can construct the Kodaira-N\'eron model of an elliptic curve $E$ over 
$\A^1=\operatorname{Spec}(\C[h])$.
The possible types of singular fibers of elliptic surfaces were classified by Kodaira~\cite{MR0184257}.
Tate's algorithm provides a way to determine the Kodaira type of singular fibers without actually resolving the singularities~\cite{MR0393039}.

We consider an elliptic curve $E$ over 
$\A^1$
 with a section in the Weierstrass form:
\begin{equation}\label{eq:wei}
y^2=x^3+a(h)x+b(h),\hspace{5mm} a(h),b(h)\in \C[h].
\end{equation}
We may assume that for $a(h)$ and $b(h)$, the polynomials  $l(h)$ such that $l(h)^4|a(h),\ l(h)^6|b(h)$ are only constants.
Otherwise, we may divide both sides of the equations by $l(h)^6$ and replace $x, y$ by $x/l(h)^2, y/l(h)^3$ if necessary. 
Let $X_1$ be the affine surface defined by the equation (\ref{eq:wei}):
\[
X_1=\left\{(x,y,h)\in \A^2\times \A^1\relmiddle| y^2=x^3+a(h)x+b(h)\right\}.
\]
A general fiber of the projection $\varphi_1\colon X_1\to \A^1$ is an affine part of an elliptic curve.
Let $n$ be the minimal positive integer satisfying $\operatorname{deg}a(h)\leq4n$ and $\operatorname{deg}b(h)\leq6n$.
Dividing  equation (\ref{eq:wei}) by $h^{6n}$ and replacing
$
\tilde{x}=x/h^{2n},\
\tilde{y}=y/h^{3n},\
\tilde{h}=1/h,
$
we obtain the ``$\infty$-model'':
\begin{equation}\label{eq:wei2}
\tilde{y}^2=\tilde{x}^3+\bar{a}(\tilde{h})\tilde{x}+\bar{b}(\tilde{h}),
\hspace{3mm}
\bar{a}(\tilde{h}), \bar{b}(\tilde{h})\in\C[\tilde{h}]
\end{equation}
where $\bar{a}(\tilde{h})=a(h)/h^{4n},\ \bar{b}(\tilde{h})=b(h)/h^{6n}$ are polynomials in $h$.
Let $X_2$ be the affine surface defined by equation (\ref{eq:wei2}).
Let $\overline{X_1}$ and $\overline{X_2}$ be the projectivized surfaces in $\pr^2\times \mathbb{A}^1$;
$
\overline{X_i}\subset \mathbb{P}^2\times \mathbb{A}^1,\
\overline{\varphi_i}\colon \overline{X_i}\to\A^1$.
We glue $\overline{X_1}$ and $\overline{X_2}$ by identifying $(x,y,h)$ and $(\tilde{x},\tilde{y},\tilde{h})$ by the equations
above.
Let us denote the surface obtained this way by $W$.
We call $W$ the  Weierstrass model of the elliptic curve (\ref{eq:wei}).
The surface $W$ has a morphism  
$\phi\colon W\to\pr^1=\A^1\cup \A^1$. 
After the minimal resolution of the singular points of $W$, we obtain a nonsingular surface $X$.
This nonsingular projective surface $X$ together with the fibration $\phi\colon X\to \pr^1$ is called the Kodaira-N\'eron model of the elliptic curve $E$ over $\A^1$.

The singular fibers of an elliptic surface are classified by Kodaira. 
The Kodaira type of the elliptic surface $X$ can be computed from the equation (\ref{eq:wei}) using Tate's algorithm.
From the Weierstrass form equation~(\ref{eq:wei}), we can associate two quantities:
$\Delta\coloneqq 4a^3+27b^2,
\
j\coloneqq 4a^3/\Delta$.
Here,
$\Delta$ is the discriminant of the cubic $x^3+a(h)x+b(h)$ and $j$ is the $j$-invariant.
The Kodaira types of the singular fibers are determined  as in  Table~\ref{tab:Tate} by the order of 
$\Delta$ and $j$ which we denote 
${\rm ord}_{v}(\Delta),\ {\rm ord}_{v}(j)$.
\begin{table}[ht]
\centering
\begin{tabular}{|c|c|c|c||c|c|c|c|}\hline
fiber type & Dynkin type & ${\rm ord}_{v}$($\Delta$) &${\rm ord}_{v}(j)$ &fiber type & Dynkin type & ${\rm ord}_{v}$($\Delta$) &${\rm ord}_{v}(j)$\\ \hline
${\rm I_0}$ & - & $0$ & $\geq$0 & ${\rm I_0^{*}}$ & $D_{4}^{(1)}$ & $6$ & $\geq$0\\ \hline
${\rm I}_m$ & $A_{m-1}^{(1)}$ & $m$ & $-m$ &${\rm I}_m^{*}$ & $D_{4+m}^{(1)}$ & $6+m$ & $-m$\\ \hline
${\rm II}$ & - & $2$ & $\geq$0 &${\rm IV^{*}}$ & $E_{6}^{(1)}$ & $8$ & $\geq$0\\ \hline
${\rm III}$ & $A_{1}^{(1)}$ & $3$ & $\geq$0 & ${\rm III^{*}}$ & $E_{7}^{(1)}$ & $9$ & $\geq$0\\ \hline
${\rm IV}$ & $A_{2}^{(1)}$ & $4$ & $\geq$0 & ${\rm II^{*}}$ & $E_{8}^{(1)}$ & $10$ & $\geq$0\\ \hline
\end{tabular}
\caption{Tate's algorithm and Kodaira types}
\label{tab:Tate}
\end{table}

\subsubsection{Elliptic surface associated with the spectral curves}
We introduce the main subject of this article, fibration of spectral curves associated with integrable Lax equations.  
Let us consider a $2n$-dimensional integrable system with a Lax pair.
The spectral curve is parametrized by $n$ functionally independent first integrals $H_1,\dots,H_n$.
\begin{thm}
\label{thm:genus1spectral}
Each elliptic surface whose general fiber is a spectral curve  of the autonomous 2-dimensional Painlev\'e equation has the following singular fiber at $H=\infty$.
 \begin{table}[ht]
 \centering
 \begin{tabular}{|c|c|c|c|c|c|c|c|c|}\hline
 Hamiltonian & $H_{\rm VI}$ & $H_{\rm V}$ &$H_{\rm III(D_6)}$&$H_{\rm III(D_7)}$ &$H_{\rm III(D_8)}$&$H_{\rm IV}$&$H_{\rm II}$&$H_{\rm I}$\\ \hline
Kodaira type & ${\rm I}_0^{*}$ & ${\rm I}_1^{*}$ & ${\rm I}_2^{*}$ & ${\rm I}_3^{*}$& ${\rm I}_4^{*}$& ${\rm IV}^{*}$& ${\rm III}^{*}$& ${\rm II}^{*}$\\ \hline
Dynkin type & $D_4^{(1)}$ & $D_5^{(1)}$ & $D_6^{(1)}$ & $D_7^{(1)}$& $D_8^{(1)}$& $E_6^{(1)}$& $E_7^{(1)}$& $E_8^{(1)}$\\ \hline
 \end{tabular}
 \caption{The singular fiber at $H=\infty$ of spectral curve fibrations of the autonomous 2-dimensional Painlev\'e equations.}
 \end{table}
\end{thm} 
\begin{proof}
First, let us consider the first Painlev\'e equation  
\[
\frac{d^2q}{dt^2}=6q^2+t.
\]
The first Painlev\'e equation has a Lax form 
\begin{align*}
\frac{\partial A}{\partial t}-\delta\frac{\partial B}{\partial x}+[A,B]=0,
\hspace{5mm}
A(x)=
\begin{pmatrix}
	-p & x^2+qx+q^2+\tilde{t}\\
	x-q & p
\end{pmatrix},\hspace{5mm}
B(x)=
\begin{pmatrix}
	0 & x+2q\\
	1 & 0
\end{pmatrix}.
\end{align*}
The spectral curve associated with its autonomous equation is defined by 
$
{\rm det}\left(yI_2-A(x)\right)=0.
$
This is equivalent to the following elliptic curve
\begin{align*}
y^2=x^3+\tilde{t}x+H_{\rm I}, \hspace{3mm}
\hspace{3mm}
H_{\rm I}=p^2-q^3+\tilde{t} q.
\end{align*}
Let us write $H_{\rm I}$ as $h$ for short.
We consider this curve as an elliptic curve $E$ over an affine line $\A^1$.
By  changing variables as
$
\tilde{h}=h^{-1},\ \tilde{x}=h^{-2}x,\ \tilde{y}=h^{-3}y
$,
obtain the ``$\infty$-model'';
\begin{equation}\label{eq:wei4}
\tilde{y}^2=\tilde{x}^3+\tilde{t}\tilde{h}^{4}\tilde{x}+\tilde{h}^5.
\end{equation}
Thus we get the Weierstrass model
 $\varphi\colon W\to \pr^1$.
The Kodaira-N\'eron model 
$\phi\colon X\to \pr^1$ 
of elliptic curve $E$
is obtained from $W$ by the minimal desingularization.
The Kodaira-type of singular fiber at $h=\infty$ can be computed using the equation (\ref{eq:wei4}).
The discriminant of the cubic $\tilde{x}^3+\tilde{t}\tilde{h}^{4}\tilde{x}+\tilde{h}^5$
and  the  $j$-invariant are
\begin{align*}
\Delta=&4\left(\tilde{t}\tilde{h}^{4}\right)^3+27\left(\tilde{h}^5\right)^2
=\tilde{h}^{10}\left(27+4\tilde{t}^3\tilde{h}^2\right),
\\
j=&\frac{4}{\Delta}\left(\tilde{t}\tilde{h}^4\right)^3
=\frac{4\tilde{t}^3\tilde{h}^{12}}{\tilde{h}^{10}(27+2\tilde{t}^3\tilde{h}^2)}
=\tilde{h}^2\frac{4\tilde{t}^3}{27+4\tilde{t}^3\tilde{h}^2}.
\end{align*}
Therefore, the order of zero of $\Delta$ and $j$ at $\tilde{h}=0$ is
${\rm ord}_{\infty}(\Delta)=10, {\rm ord}_{\infty}(j)=2$.
Using  Tate's algorithm, we find that the elliptic surface $X\to \pr^1$ has the singular fiber of type ${\rm II}^{*}$.
In  Dynkin's notation, this fiber is of type ${\rm E}_{8}^{(1)}$.
Let us express the other two zeros of the discriminant $\Delta$ by $h_{+}$ and $h_{-}$.
Since $h_{+}$ and $h_{-}$ are both simple zeros of $\Delta$, the Kodaira type of the fibers at $h_{+}$ and $h_{-}$ are ${\rm I_{1}}$, from Tate's algorithm.
\begin{multicols}{2}
\begin{figure}[H]
	\includegraphics[scale=0.4]{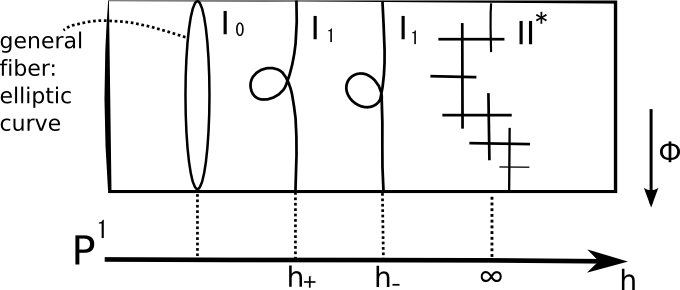}	
	\caption{The elliptic surface associated to  the spectral curves of the autonomous $P_{\rm I}$.	}
\end{figure}	

\columnbreak
\begin{align*}
\begin{xy}
\ar@{-} 
(0,0) *+[Fo]{2}="J";
(10,0) *+[Fo]{4}="H" \ar@{-} "H";
(20,0)  *+[Fo]{6}="C" \ar@{-} "C";
(20,10) *+[Fo]{3}="A" \ar@{-} "A";
(30,0) *+[Fo]{5}="D" ;
(40,0) *+[Fo]{4}="E" ;
(50,0) *+[Fo]{3}="F" ;
(60,0) *+[Fo]{2}="I" ;
(70,0) *+[Fo]{1}="K" ;
\ar@{-} "C";"D";
\ar@{-} "D";"E";
\ar@{-} "E";"F";
\ar@{-} "F";"I";
\ar@{-} "I";"K";
\end{xy}
\end{align*}
The dual graph of the singular fiber of the Kodaira type ${\rm II}^{*}$ (Dynkin type $E_8^{(1)}$).
The numbers in circles denote the multiplicities of components in the reducible fibers.
\end{multicols}
The spectral curves associated to other autonomous 2-dimensional Painlev\'e equations  are also curves of genus one for the general values of the Hamiltonians.
It is well known that  curves with genus one can always be transformed into the Weierstrass normal form.
With the aid of computer programs,
 we can transform the spectral curves of autonomous 2-dimensional Painlev\'e equations into the Weierstrass normal form\footnote{Magma, Sage and Maple serve this purpose. 
 Magma even calculates Kodaira types from the equations.}.
Once the spectral curves are in the Weierstrass form, 
we  can construct the Weierstrass model.
After the minimal desingularizations, we obtain the elliptic surfaces.  
We  apply Tate's algorithm to find the Kodaira types of the singular fibers of these elliptic surfaces. 
\end{proof}
\begin{rem}
For the special values of the parameters and the constant $\tilde{t}$ of the equations, the order of zeros or poles of $\Delta$ and $j$ changes so that the Kodaira type changes.
This corresponds to the situations when the equations have the special solutions.
In this paper, we concentrate on the situations when the parameters are generic. 
\end{rem}
\subsubsection{The Liouville tori and elliptic surface}
For the autonomous 2-dimensional Painlev\'e-type equations, the Liouville tori are elliptic curves.
Therefore, elliptic surfaces are naturally associated to these integrable systems as Hamiltonian fibrations.
We think of the time variable $\tilde{t}$ as a constant.
\begin{thm}
Each elliptic surface associated to the autonomous 2-dimensional Painlev\'e equation as the Hamiltonian fibration has the following singular fiber at $h=\infty$.
 \begin{table}[ht]
 	\centering
 	\begin{tabular}{|c|c|c|c|c|c|c|c|c|}\hline
 		Hamiltonian & $H_{\rm VI}$ & $H_{\rm V}$ &$H_{\rm III(D_6)}$&$H_{\rm III(D_7)}$ &$H_{\rm III(D_8)}$&$H_{\rm IV}$&$H_{\rm II}$&$H_{\rm I}$\\ \hline
 		Kodaira type & ${\rm I}_0^{*}$ & ${\rm I}_1^{*}$ & ${\rm I}_2^{*}$ & ${\rm I}_3^{*}$& ${\rm I}_4^{*}$& ${\rm IV}^{*}$& ${\rm III}^{*}$& ${\rm II}^{*}$\\ \hline
 		Dynkin type & $D_4^{(1)}$ & $D_5^{(1)}$ & $D_6^{(1)}$ & $D_7^{(1)}$& $D_8^{(1)}$& $E_6^{(1)}$& $E_7^{(1)}$& $E_8^{(1)}$\\ \hline
 	\end{tabular}
 	\label{thm:genus1Liouville}
 \end{table}
\end{thm} 
\begin{proof}
Let us first consider the easiest case: the first Painlev\'e equation.
We write $h=H_{\rm I}$ for short.
The Hamiltonian of the first Painlev\'e equation is
$
h=p^2-(q^3+\tilde{t} q).
$
We view it as an elliptic curve over $\A^1$:
\begin{align*}
\left\{(q,p,h)\in \A^2_{(q,p)}\times\A^1_h\relmiddle| p^2=q^3+\tilde{t}q+h \right\}
\to
\A^1_h.
\end{align*}
As in the case of the spectral curve fibration, we can construct the Kodaira-N\'eron model of the elliptic surface from the equation.
Replacing $ \tilde{q}=q/h^2,\ \tilde{p}=p/h^3,\ \tilde{h}=1/h$,
we obtain the $\infty$-model:
\begin{align*}
\tilde{p}^2=
\tilde{q}^3+\tilde{t}\tilde{h}^4\tilde{q}+\tilde{h}^5.
\end{align*}
After  compactification and the  minimal desingularization, we obtain a regular elliptic surface whose general fiber at $h$ is the elliptic curve defined by $ p^2=q^3+\tilde{t}q+h$.
The discriminant and the $j$-invariant are:
\begin{align*}
\Delta=
4\left(\tilde{t}\tilde{h}^4\right)^3+27\left(\tilde{h}^5\right)^2
=\tilde{h}^{10}(27+4\tilde{t}^3\tilde{h}^2),
\hspace{3mm}
j=
\frac{4(\tilde{t}\tilde{h}^4)^3}{4\tilde{h}^{10}(27+4\tilde{t}^3\tilde{h}^2)}
=\tilde{h}^{2}\frac{\tilde{t}^3}{27+4\tilde{t}^3\tilde{h}^2}.
\end{align*}
Thus the order of zero of $\Delta$ and $j$ at $\tilde{h}=0$ are
${\rm ord}_{\infty}(\Delta)=10, {\rm ord}_{\infty}(j)=2$.
It follows from Tate's algorithm that the singular fiber at $h=\infty$ of the autonomous $P_{\rm I}$-Hamiltonian fibration is of type ${\rm II}^{*}$, or $E_8^{(1)}$ in the Dynkin's notation.

We use computer programs to transform the other Hamiltonians into the Weierstrass normal form.
The rest of the proof is similar to the case of $H_{\rm I}$.
\end{proof}
The agreement of the singular fibers at $h=\infty$ of the spectral curve fibrations and the Liouville torus fibrations is not a coincidence.
Liouville tori are related to the Jacobian varieties of the spectral curves, and taking the Jacobians are isomorphism in genus 1 cases by Abel's theorem.
It might be natural to study the fibration of the Liouville tori, but we have to deal with families of 2-dimensional Abelian varieties to study the autonomous 4-dimensional  Painlev\'e-type equations. 
Therefore studying the degeneration of the Liouville tori become harder compared to the cases of the 2-dimensional Painlev\'e equations.
On the other hand, we only need to deal with genus 2 curves to study spectral curve fibrations of the 4-dimensional autonomous Painlev\'e type equations.
Thus, studying the degeneration of spectral curves is the main object of  this paper.

Using blowing-up process, Okamoto resolved the singularities of 2-dimensional Painlev\'e differential equations and constructed the ``spaces of initial conditions''~\cite{MR614694}.
While he deals with singularities of the systems of differential equations, we deal with spectral curves or Hamiltonians themselves for autonomous cases.

A space of initial conditions can be characterized by a pair $(X, D)$ of
a rational surface $X$ and the anti-canonical divisor $D$ of $X$. 
Each irreducible
component of $D$ is a rational curve and, in the case of the Painlev\'e equations,
is called as a vertical leaf~\cite{MR1882403}.
 The intersection diagram of $D$ is given by
that of the certain root lattice listed above.
\begin{rem}
The spaces of initial conditions are also considered for  several  cases of 4-dimensional Garnier systems and Noumi-Yamada systems~\cite{MR1215998,MR1778322,MR2276182,MR2117253}. 
\end{rem}

If we restrict our attention to the autonomous cases, the geometrical studies are much simpler.
The autonomous 2-dimensional Painlev\'e equations constructed from the spaces of initial conditions were studied by Sakai~\cite{MR3077699}.

\subsection{Degeneration of genus 2 curves and Liu's algorithm}
We  apply a similar method as in the previous subsection to the 40 types of the autonomous  4-dimensional Painlev\'e-type equations.
While genus of spectral curves of the autonomous 2-dimensional Painlev\'e equations is $1$, genus of the autonomous 4-dimensional Painlev\'e-type equations is $2$.
As we  use Tate's algorithm to determine the fibers  of the N\'eron models for the genus 1 curves, we use Liu's algorithm for the genus 2 curves.
While the spectral curves of the autonomous 2-dimensional Painlev\'e equations are 1-parameter families parameterized by the Hamiltonian, those of the 4-dimensional Painlev\'e-type equations are 2-parameters families parametrized by two first integrals.

The spectral curve is now 2-parameter family of genus two curves
\begin{align*}
F(w,x,y,h_1,h_2)=0,
\end{align*}
where $h_1$ and $h_2$ are the functionally independent conserved quantities of the system and $F(w,x,y,h_1,h_2)\in\C[w,x,y,h_1,h_2]$.
Let us consider a generic line in $\operatorname{Spec}(\C[h_1,h_2])$
\begin{align*}
ah_1+bh_2=c,
\end{align*}
where $a, b, c\in\C$ are generic. 
Upon replacing $\tilde{a}=-a/b$, $\tilde{b}=c/b$,
\begin{align*}
h_2=\tilde{a}h_1+\tilde{b}
\end{align*}
we obtain a one-parameter family of spectral curves
\begin{align*}
\tilde{F}(w,x,y,h_1)
\coloneqq
F(w,x,y,h_1,\tilde{a}h_1+\tilde{b})=0.
\end{align*}
We study degeneration of this one-parameter family of spectral curves assuming $\tilde{a}$ and $\tilde{b}$ are generic.

\subsubsection{Liu's algorithm}
We summarize studies on the degeneration of genus two curves. 
The numerical classification of the fibers in pencils of genus 2 curves are given by Ogg \cite{MR0201437} and Iitaka \cite{Iitaka}. 
Namikawa and Ueno~\cite{MR0369362}  completed the geometrical classification of such fibers (and added a few missing types in \cite{MR0201437} and \cite{Iitaka}). 
There are 120 types in Namikawa-Ueno's classification, while there are only 10 types in Kodaira's classification of the fibers in pencils of genus 1 curves.\footnote{Kodaira type ${\rm I}_n~(n\geq 1)$ and ${\rm I}_n^*~(n\geq 1)$ are  counted as 1 type, respectively.}
Liu gave an algorithm similar to Tate's algorithm for genus 2 curves~\cite{MR1202389,MR1285783}.
Using Liu's algorithm, we can determine the Namikawa-Ueno type of singular fibers from explicit equations of pencils of genus 2 hyperelliptic curves in the Weierstrass form.

\begin{table}[H]	
	\centering	
	\begin{tabular}{|c|c|c|c|}
		\hline 
		& genus of 
		& types of singular
		& algorithm to determine
		\\
		& spectral curve 
		&  fibers in pencils
		&  types of fibers
		\\
		\hline
		\hline
		2-dim. Painlev\'e 
		& 1 
		& Kodaira
		& Tate's algorithm
		\\
		\hline
		4-dim. Painlev\'e 
		& 2 
		& Namikawa-Ueno
		& Liu's algorithm
		\\
		\hline
	\end{tabular}
\end{table}	

\subsection{Generic degeneration of the spectral curves of of the autonomous 4-dimensional Painlev\'e-type equations}
Let us state the main theorem of this paper.
\begin{thm}\label{thm:table}
The spectral curves of the autonomous 4-dimensional Painlev\'e-type equations have the following types of generic degenerations as in Table~\ref{table:generic}.
\end{thm}
\begin{rem}
Let us explain the notations used in Table~\ref{table:generic}.
The Hamiltonians are the Hamiltonians of the 4-dimensional Painlev\'e-type equations.
The explicit forms of the non-autonomous counterparts can be found in \cite{sakai_imd,MR3087954,2016arXiv160803927K, 2016arXiv160905263K, MR3740334,MR1042827,MR2547101}.
The spectral types indicate the type of  corresponding linear equations.
Such notations are explained in Subsection~\ref{subsec:imd} and \ref{ap:linear}. %
``Namikawa-Ueno type'' means the  types of degeneration of genus 2 curves in Namikawa-Ueno~\cite{MR0369362}.
When the fiber contains components expressible by the Kodaira-type, we also write its Dynkin's name in the column  noted ``Dynkin''.
The column named ``stable'' tells us the type of the stable model~\cite{MR1202389}.
The ``$\Phi$'' indicates the group of connected components 
 of the N\'eron model of the Jacobian $J(C)$.
The symbol $(n)$ means the cyclic group with $n$ elements.\footnote{When $n=0$, $(n)$ is the trivial group.}
We also write Ogg's type written in ``On pencils of curves of genus two''~\cite{MR0201437}.
Ogg uses the notation ``Kod'' to express Kodaira-type and do not distinguish them, while Namikawa and Ueno  does.
Ogg's type might be helpful to see the rough classification.
For example, all 8 types of matrix Painlev\'e equations have the same Ogg's type 14.
The column ``monodromy'' means 5 monodromy types in Namikawa-Ueno~\cite{MR0369362}.
Elliptic types are those with finite degrees of monodromy, while parabolic types have infinite degrees.
Elliptic[1] are those with stable model ``I'' in Liu's notation.
We abbreviate as ``ell[1]''.
We summarize such correspondences in the following table.
\begin{table}[h]
\centering
\begin{tabular}{|c|c|c|}
\hline
Type  & Order of monodromy & stable type (Liu's notation~\cite{MR1202389})
 \\ \hline \hline
 elliptic[1] & finite & I
 \\ \hline
 elliptic[2] & finite & V
  \\ \hline
 parabolic[3] & infinite & II, VI
  \\ \hline
  parabolic[4] & infinite & III,VII
   \\ \hline
    parabolic[5] & infinite & IV
    \\ \hline
\end{tabular}
\caption{Namikawa-Ueno's elliptic and parabolic types and Liu's stable model types}
\end{table}
The column named ``page'' indicate the page number of Namikawa-Ueno's paper where some data of the corresponding type can be found.
\end{rem}

\begin{table}[H]	
	\centering	
	\begin{tabular}{|c|c|}
		\hline 
		Hamiltonian
		&
		the Hamiltonian of the Painlev\'e-type equation\cite{sakai_imd,kns, 2016arXiv160803927K,2016arXiv160905263K,MR3740334}
		\\
		\hline
		spectral type
		&
		the spectral type of the corresponding linear equation~\cite{BB10865098,kns, 2016arXiv160803927K}, Appendix~\ref{ap:linear}
		\\
		\hline
		monodromy
		&
		5 types of monodromy (elliptic[1],[2],Parabolic[3],[4],[5]) as in Namikawa-Ueno~\cite{MR0369362}
		\\
		\hline
		Namikawa-Ueno 
		&
		the type of fiber in the minimal model following the notation in Namikawa-Ueno~\cite{MR0369362,MR1285783}
		\\
		\hline
		Dynkin
		&
		Dynkin type (when the fiber contains Kodaira-type component)
		\\
		\hline
		stable
		&
		the type of stable model of the fiber~\cite{MR1202389}
		\\
		\hline
		$\Phi$
		&
		the group of connected components of the N\'eron model of the Jacobian $J(C)$~\cite{MR1285783}
		\\
		\hline
		Ogg
		&
		the type of fiber in the minimal model following the notation in Ogg~\cite{MR0201437}
		\\
		\hline
		page
		&
		the page number of the fiber in Namikawa-Ueno's paper~\cite{MR0369362}
		\\
		\hline
	\end{tabular}
\end{table}	

\begin{proof}[Proof of Theorem~\ref{thm:table}]
Let us take Gar$\frac{9}{2}$, the most degenerated Garnier system, to demonstrate our computation.
The Lax pair is given by
\begin{align*}
\frac{dA(x)}{dt_i}+[A(x),B_i(x)]=0,
\hspace{3mm}
i=1,2
\end{align*}
where
\begin{align*}
A(x)\ =&A_0x^3+A_1x^2+A_2x+A_3,
\\
B_1(x)=&A_0x^2+A_1x+B_{10}
=\frac{A(x)}{x}+C_1-\frac{A_3}{x},
\\
B_2(x)=&-A_0x+B_{20},
\end{align*}
for
\begin{align*}
&A_0=
\begin{pmatrix}
0 & 1
\\
0 & 0
\end{pmatrix},
\
A_1=
\begin{pmatrix}
0 & p_1
\\
1 & 0
\end{pmatrix},
A_2=
\begin{pmatrix}
q_2 & p_1^2+p_2+2\tilde{t}_1
\\
-p_1 & -q_2
\end{pmatrix},
\\
&A_3=
\begin{pmatrix}
q_1-p_1q_2\ & \ p_1^3+2p_1p_2-q_2^2+\tilde{t}_1p_1-\tilde{t}_2
\\
-p_2+\tilde{t}_1 & -q_1+p_1q_2
\end{pmatrix},
\\
&B_{10}=
\begin{pmatrix}
q_2 & p_1^2+2p_2+\tilde{t}_1
\\
-p_1 & -q_2
\end{pmatrix},
B_{20}=
\begin{pmatrix}
0 & -2p_2
\\
-1 & 0
\end{pmatrix},
C_{1}=
\begin{pmatrix}
0 & p_2-\tilde{t}_1
\\
0 & 0
\end{pmatrix}.
\end{align*}
	The characteristic polynomial 
\begin{align*}
\det(yI_2-A(x))=0
\end{align*}	
	 is expressed as
\begin{align*}
y^2=x^5+3\tilde{t}_2x^3-\tilde{t}_1x^2+(2\tilde{t}_2^2-h_1)x+h_2-\tilde{t}_1\tilde{t}_2,
\end{align*}
where $h_1=H_{{\rm Gar}, \tilde{t}_1}^{9/2}$, $h_2=H_{{\rm Gar}, \tilde{t}_2}^{9/2}$.
Note that it is already in the Weierstrass form.
We consider the degeneration along a line $h_2=ah_1+b$, where $a$ and $b$ are generic constants.
\begin{align*}
y^2=x^5+3\tilde{t}_2x^3-\tilde{t}_1x^2+(2\tilde{t}_2^2-h_1)x+ah_1+b-\tilde{t}_1\tilde{t}_2,
\end{align*}
In order to see the degeneration at $h_1=\infty$, we introduce
$\tilde{x}=x/h_1,\ \tilde{y}=y/h_1^3,\ \tilde{h}=1/h_1$.
\[
\tilde{y}^2=
\tilde{h} x^5
+3\tilde{t}_2  \tilde{h}^3 x^3
-\tilde{t}_1\tilde{h}^4 x^2
-\tilde{h}^4 x
+2 \tilde{t}_2^2 \tilde{h}^5 x
+\tilde{h}^5(a+b \tilde{h}-\tilde{t}_1\tilde{t}_2\tilde{h})
.
\]
The Igusa invariants of the quintic can be calculated as follows.
\begin{align*}
J_2 &{} =-5\tilde{h}^5+\frac{67}{4}\tilde{t}_2^2\tilde{h}^6 ,\quad
J_4=\frac{15}{8}\tilde{h}^{10}+O(\tilde{h}^{11}),\quad
J_6  =\frac{5}{16}\tilde{h}^{15}+O(\tilde{h}^{16}) ,\\
J_8&{}=-\frac{325}{256}\tilde{h}^{20}+O(\tilde{h}^{21}),
\quad
J_{10} 
=-\frac{1}{16}\tilde{h}^{25}+O(\tilde{h}^{26}) 
.
\end{align*}
Since $5\cdot\operatorname{ord}_{\infty}J_{2i}-i\cdot \operatorname{ord}_{\infty}J_{10}=0$ for $i\leq5$, the stable model has smooth fiber  (type(I) in Liu's notation of stable curves) at $\tilde{h}=0$.
 With further computation,
 we find that generic degeneration is  of type ${\rm VII}^{*}$ in  Namikawa-Ueno's notation. 
This type is type 22 in Ogg's notation~\cite{MR0201437}.

${\rm VII^{*}}\colon H_{{\rm Gar},\tilde{t_1}}^{\frac{9}{2}}$
\[
\begin{xy}
\ar@{-}  (-10,0) *+[Fo]{1};
(0,0) *+[Fo]{2B}="J" \ar@{-} "J";
(10,0) *+[Fo]{5}="H" \ar@{-} "H";
(20,0)  *+[Fo]{8}="C" \ar@{-} "C";
(20,10) *+[Fo]{4}="A" \ar@{-} "A";
(30,0) *+[Fo]{7}="D" ;
(40,0) *+[Fo]{6}="E" ;
(50,0) *+[Fo]{5}="F" ;
(60,0) *+[Fo]{4}="I" ;
(70,0) *+[Fo]{3}="K" ;
(80,0) *+[Fo]{2}="L" ;
(90,0) *+[Fo]{1}="M" ;
\ar@{-} "C";"D";
\ar@{-} "D";"E";
\ar@{-} "E";"F";
\ar@{-} "F";"I";
\ar@{-} "I";"K";
\ar@{-} "K";"L";
\end{xy}
\]
The numbers in circles denote the multiplicities of components in the reducible fibers.
All curves are (-2)-curves except the one expressed as ``$B$'',  which is a (-3)-curve.

We can transform the spectral curves of the other autonomous 4-dimensional Painlev\'e equations into the Weierstrass normal form.\footnote{We used Maple's command ``Weierstrassform'' implemented by van Hoeij.}
We restrict the 2-parameter families of genus 2 curves to a generic line on the base and   apply Liu's algorithm to find the generic degeneration.
\end{proof}

\begin{rem}
In this paper, we proposed a possible clue to characterize the integrable systems studying the degeneration of spectral curves.
The Namikawa-Ueno types and the monodromy matrices of the generic singular fibers are given.
Our plan in the future work is to understand the phase spaces as the relative compactified Jacobian of the spectral curve fibration by studying the discriminant locus of the base space and combine with the  results on monodromies of the singular fibers. 
\end{rem}

\newpage

\thispagestyle{empty}
\begin{table}[h]
\centering
{
   \small
  \begin{tabular}{|l|c|c|c|c|c|c|c|c|}\hline
   Hamiltonian 
   & spectral type 
   & monodromy
   & Namikawa-Ueno
   & Dynkin 
   & stable 
   &    $\Phi$ 
   & Ogg
   & page \\ \hline \hline
      $H_{{\rm Gar}}^{1+1+1+1+1}$ 
      & 11,11,11,11,11 
      & ell[1] 
      & ${\rm I^{*}_{0-0-0}}$ 
      &-
      & II
      & $(2)^4$
      & 33      
      & p.155
      \\ \hline
      $H_{{\rm Gar}}^{2+1+1+1}$
      &(1)(1),11,11,11  
      & par[3]
      & ${\rm I^{*}_{1-0-0}}$
      &-
      & II
      & $(4)\times (2)^2$
      & 33
      & p.171
      \\ \hline
       $H_{{\rm Gar}}^{3/2+1+1+1}$
      &(2)(1),(1)(1)(1)
    & par[3]
    & ${\rm I^{*}_{2-0-0}}$
    & -
    & II
    & $ (2)^4$
    & 33
    & p.171
      \\ \hline
      $H_{{\rm Gar}}^{2+2+1}$ 
    &(1)(1),(1)(1),11  
    & par[4]
    & ${\rm I^{*}_{1-1-0}}$
    &  - 
    & III 
    & $(4)^2$
    &  33
    & p.180
    \\ \hline
    $H_{{\rm Gar}}^{3/2+2+1}$ 
    & $(1)_2,(1)(1),11$ 
    & par[4]
    & ${\rm I^{*}_{1-2-0}}$
    & -
    & III
    & $ (4)\times(2)^2$
    & 33
    & p.180
    \\ \hline
    $H_{{\rm Gar}}^{3/2+3/2+1}$ 
    & $(1)_2,(1)_2,11$ 
    & par[4]
    &${\rm I^{*}_{2-2-0}}$  
    &-
    & III
    & $(2)^4$
    & 33
    & p.180
    \\ \hline
       $H_{{\rm Gar}}^{3+1+1}$ 
        &((1))((1)),11,11 
        & ell[2]
        & ${\rm I_{0}^{*}-IV^{*}-(-1) }$ 
        &$ D_{4}-E_{6}-(-1)$
        & V 
        & $(6)\times (2)$
        & 29a
        & p.161
        \\ \hline
      $H_{{\rm Gar}}^{5/2+1+1}$ 
        &(((1)(1)))(((1)))
        & ell[2]
        &${\rm I_0^{*}-III^{*}-(-1)}$
        & $ D_{4}-E_{7}-(-1)$
        & V
        & $(2)^3$
        & 29a
        & p.162
        \\ \hline  
      $H_{{\rm Gar}}^{4+1}$ 
    &(((1)))(((1))),11 
    & par[3] 
    & ${\rm III^{*}-II^{*}_{0}}$ 
    & $E_{7}-{\rm II}^{*}_{0}$
    & II
    & (8)
    & 23
    & p.178
    \\ \hline
    $H_{{\rm Gar}}^{7/2+1}$ 
          & $(((((1)))))_2,11$
          & par[3]
          & ${\rm II^{*}-{\rm II}_{0}^{*}}$
          & $ E_8-{\rm II}_0^{*}$
          & II
          & $(4)$
          & 25
          & p.176
           \\ \hline
      $H_{{\rm Gar}}^{3+2}$
       &((1))((1)),(1)(1)
       & par[3]
       & ${\rm IV^{*}-I^{*}_1-{(-1)}}$
       & $E_6-D_5-(-1)$ 
       & VI
       & (12) 
       & 29a
       & p.175
       \\ \hline
    $H_{{\rm Gar}}^{5/2+2}$ 
    &$(((1)))_2,(1)(1)$
    & par[3]
    &  ${\rm III^*}-{\rm I^*_{1}}-(-1)$
    & $E_7-D_5-(-1)$
    & VI
    & $(4)\times(2)$
    & 29a
    & p.177
    \\ \hline  
        $H_{{\rm Gar}}^{3/2+3}$
      & $(1)_2,((1))((1))$
       & par[3]
       & ${\rm IV^{*}-I^{*}_2-{(-1)}}$
       & $E_6-D_6-(-1)$ 
       & VI
       & $(6)\times(2)$ 
       & 29a
       & p.175
      \\ \hline
      $H_{{\rm Gar}}^{5/2+3/2}$
      & $(((1)))_2,(1)_2$
      & para[3]
      & ${\rm III^{*}-I^{*}_2-(-1)}$ 
      &$ E_7-D_6-(-1)$
      & VI 
      &$(2)^3$
     & 29a 
      & p.177
      \\ \hline
   $H_{{\rm Gar}}^{5}$
       &((((1))))((((1))))
       & ell[1] 
       & ${\rm IX-3}$
       &-
       & I
       & (5)
       & 21 
       & p.157
       \\ \hline
      $H_{{\rm Gar}}^{9/2}$ 
      & $(((((((1)))))))_2$ 
       & ell[1]
      & ${\rm VII^{*}}$ 
      &-
      & I 
      &(2)
      & 22 
      & p.156
      \\ \hline
      $H_{\rm FS}^{A_5}$ 
      & 21,21,111,111
      & par[3]
      & ${\rm II_{3-0}}$
      &-
      & II
      &(12)
      & 41
      & p.171
      \\ \hline
     $H_{\rm FS}^{A_4}$ 
     &(11)(1),21,111 
     &par[5]
     & ${\rm II_{3-1}}$ 
     &- 
     &IV  
     &(13)
     &41    
     &p.183
     \\ \hline
        $H_{FS}^{A_3}$
      & $(1)_2,21,111$ 
      &par[5]
      & ${\rm II_{3-2}}$ 
      &- 
      &IV  
      &(14)
      &41    
      &p.183
      \\ \hline
    $H_{\rm Suz}^{\frac{3}{2}+2}$
      & $(11)(1),(1)_{2}1$
      & par[5]
      &${\rm II_{3-3}} $
      &- 
      & IV 
      & (15)
      & 41
      & p.183
      \\ \hline
   $H_{\rm KFS}^{\frac{3}{2}+\frac{3}{2}}$
      & $(1)_3,(11)(1)$
      & par[5]
      & ${\rm II_{3-4}}$
      &-
      & IV
      & (16)
      & 41 
      & p.183
      \\ \hline
   $H_{\rm KFS}^{\frac{4}{3}+\frac{3}{2}}$
     & $(1)_3,(1)_21$
     & par[5]
     & ${\rm II_{3-5}}$
     &  - 
     & IV 
     &(17)
     & 41
     & p.183
      \\ \hline
    $H_{\rm KFS}^{\frac{4}{3}+\frac{4}{3}}$
    & $(1)_3,(1)_3$
    & par[5]
    & ${\rm II_{3-6}}$
    & - 
    & IV 
    &(18) 
    & 41
    & p.183
     \\ \hline
     $H_{\rm NY}^{A_5}$ 
     & (2)(1),111,111 
     & par[4]
     & ${\rm II_{3-1}}$
     &- 
     & III 
     & $(6)\times (2)$ 
     & 41a  
     & p.182
     \\ \hline
     $H_{\rm NY}^{A_4}$ 
     &((11))((1)),111
     & par[3]
     & ${\rm IV^{*}-II_{3}}$ 
     &$ E_{6}-{\rm II}_{3}$
     & II
     &(10) 
     & 41b  
     & p.175
     \\ \hline
    $H_{\rm Ss}^{D_6}$
     & 31,22,22,1111 
     & par[3]
     & ${\rm I_2-I_0^{*}-0}$
     &$ A_1-D_4-0$
     & VI
     & $ (2)^3$
     & 2    
     & p.171
     \\ \hline
$H_{\rm Ss}^{D_5}$
    &(111)(1),22,22
    & par[4]
    & ${\rm I_{2}-I_{1}^{*}-0}$
    & $ A_{1}-D_{5}-0$
    & VII 
    & $(4)\times (2)$
    & 2 
    & p.180
    \\ \hline
    $H_{\rm Ss}^{D_4}$
     & (2)(2),(111)(1) 
     & par[4]
     & ${\rm I_{2}-I_{2}^{*}-0}$
     &$ A_{1}-D_{6}-0$
     & VII
     & $ (2)^4$
     & 2
     & p.180
     \\ \hline
   $H_{\rm KSs}^{\frac{3}{2}+2}$ 
   & $(1)_{2}11,(2)(2)$ 
   &par[4] 
   & ${\rm I_{2}-I_{3}^{*}-0}$  
   &$ A_{1}-D_{7}-0$
   & VII  
   & $(4)\times (2)$
   & 2
   & p.180
   \\ \hline
$H_{\rm KSs}^{\frac{4}{3}+2}$ 
     & $(1)_31,(2)(2)$ 
     & par[4] 
     &${\rm I_2-I^{*}_{4}-0}$  
     &$ A_1-D_{8}-0$
     & VII   
     & $ (2)^3$
     & 2 
     & p.180
    \\ \hline
    $H_{\rm KSs}^{\frac{5}{4}+2}$
    & $(1)_4,(2)(2)$
    & par[4] 
    & ${\rm I_2-I^{*}_{5}-0}$  
    & $ A_1-D_{9}-0$
    & VII  
    & $(4)\times (2)$
    & 2 
    & p.180
    \\ \hline
     $H_{\rm KSs}^{\frac{3}{2}+\frac{5}{4}}$ 
     & $(1)_4,(2)_2$ 
     & par[4] 
     &${\rm I_2-I^{*}_{6}-0}$  
     &$ A_1-D_{10}-0$
     & VII   
     & $ (2)^3$
     & 2 
     & p.180
    \\ \hline
    $H_{\rm VI}^{\rm Mat}$ 
    & 22,22,22,211 
    & ell[2]
    &${\rm I_{0}-I_{0}^{*}-1}$ 
    & $ {\rm I_{0}}-D_{4}-1$
    & V
    & $(2)^2$
    & 14
    & p.159
    \\ \hline
   $H_{\rm V}^{\rm Mat}$ 
    & (2)(11),22,22
    & par[3]
    &  ${\rm I_{0}-I_{1}^{*}-1}$ 
    & $ {\rm I_{0}}-D_{5}-1$
    & VI
    & $(4)$
    & 14  
    & p.170
    \\ \hline
   $H_{\rm III(D_{6})}^{\rm Mat}$ 
   & (2)(2),(2)(11)
   & par[3]
   &${\rm I_{0}-I_{2}^{*}-1}$ 
   & $ {\rm I_{0}}-D_{6}-1$
   & VI
   & $(2)^2$
   & 14
   & p.170
   \\  \hline
   $H_{\rm III(D_{7})}^{\rm Mat}$ 
   & $(2)(2),(11)_2$ 
   & par[3]
   & ${\rm I_{0}-I_{3}^{*}-1}$
   &${\rm I_{0}}-D_{7}-1$
   & VI
   & $(4)$
   & 14
   & p.170
   \\ \hline
   $H_{\rm III(D_{8})}^{\rm Mat}$
    & $(2)_2,(11)_2$ 
    & par[3]
    & ${\rm I_{0}-I_{4}^{*}-1}$
    & ${\rm I_{0}}-D_{8}-1$
    & VI
    & $(2)^2$
    & 14
    & p.170
    \\ \hline
   $H_{\rm IV}^{\rm Mat}$
    & ((2))((11)),22 
    & ell[2]
    & ${\rm I_{0}-IV^{*}-1}$ 
    &${\rm I_{0}}-E_{6}-1$
    & V
    &  (3)
    & 14
    & p.160
    \\ \hline
   $H_{\rm II}^{\rm Mat}$
    &(((2)))(((11)))
    & ell[2]
    &${\rm I_0-III^{*}-1}$ 
    & ${\rm I_{0}}-E_{7}-1$
    & V
    &(2)
    & 14
    & p.162
    \\ \hline
  $H_{\rm I}^{\rm Mat}$ 
   & $(((((11)))))_2$
   & ell[2]
   & ${\rm I_{0}-II^{*}-1}$
   &${\rm I_{0}}-E_{8}-1$
   & V
   & ${\rm 0}$
   & 14
   & p.160
   \\ \hline
  \end{tabular}
}
\caption{The generic degeneration of the spectral curves of the  autonomous 4-dimensional Painlev\'e-type equations}
\label{table:generic}
\end{table}

\clearpage

\clearpage

\appendix
\section*{Appendix}
\setcounter{section}{1}
\subsection{Conserved quantities}\label{ap:con}
The autonomous 4-dimensional Painlev\'e-type equations have two functionally independent first integrals.
In this subsection, we list these first integrals for the ramified equations.\footnote{We do not write first integrals of the Garnier equations here, since the first integrals are just autonomous limit of two Hamiltonians.
Such Hamiltonians are listed by Kimura~\cite{MR1042827}, Kawamuko~\cite{MR2547101} and Kawakami~\cite{MR3740334}.}
One of the reason is that the other first integrals than the Hamiltonians  have long expressions.
Writing them for ``less-degenerated'' systems take huge spaces.
But they are easily computable from data in the previous paper~\cite{kns}. 
We only give first integrals for autonomous version of equations in Kawakami~\cite{2016arXiv160803927K, 2016arXiv160905263K, MR3740334}.
We list Hamiltonians $H$'s with $\delta$,\footnote{Hamiltonians for the case $\delta=0$  is the first integral.}
 and the other invariants $G$'s.

There are 5 ramified cases from the degeneration of $A_5$ Fuji-Suzuki system.\footnote{Although we obtain the Garnier equations of ramified types from certain degenerations of $A_5$-type Fuji-Suzuki system, we  exclude the Garnier systems.}
{\allowdisplaybreaks
\begin{align*}
H_{\rm FS}^{A_3}
= {}& 
H_{\rm III}(D_6)(-\theta_2^\infty,\delta+\theta_1^0+\theta_1;\tilde{t};q_1,p_1)
+H_{\rm III}(D_6)(\theta_2^0-\theta_1^0,\theta_2^0-\theta_1^0-\theta^1;\tilde{t};q_2,p_2)
-\frac{1}{\tilde{t}}p_1p_2(q_1q_2+\tilde{t}),
\\
G_{\rm FS}^{A_3}
= {}&
\theta_2^0 \tilde{t} 
\left(
H_{\rm III}(D_6)(-\theta_2^\infty,\theta_1+\theta_1^0;\tilde{t};q_1,p_1)
-p_1p_2
\right)
\\
&
+\left(q_1 q_2-\tilde{t}\right) \left(\theta_2^{\infty} p_2-\left(\theta_1^0-\theta_2^0\right) p_1
   +p_1 p_2 \left(\theta_1+\theta_1^0-\theta_2^0+\left(p_1-1\right)
   q_1-\left(p_2-1\right) q_2\right)\right),
\\
H_{\rm KFS}^{\frac{3}{2}+2}
= {}& 
H_{\rm III}(D_7)(-\theta_1^0;\tilde{t};q_1,p_1)
+H_{\rm III}(D_7)(\theta_2^0-\theta_1^0;\tilde{t};q_2,p_2)
+\frac{1}{\tilde{t}}
\left(p_2q_1(p_1(q_1+q_2)+\theta^{\infty}_{2})-q_1\right),  
      \\
G_{\rm KFS}^{\frac{3}{2}+2}
=   {}&
  \left(q_2-q_1\right) \left(\left(\theta_2^0-\theta_1^0\right) p_1 p_2
   q_1-\theta^{\infty}_2 p_2^2 q_1+p_1 p_2^2 q_1 q_2+p_1^2 p_2
   q_1^2+p_1 q_1+p_1 p_2 \tilde{t}\right)
   \\
   &
   +\theta_2^0
   \left(\tilde{t} H_{\rm III}(D_7)(-\theta_1^0;\tilde{t};q_1,p_1)-q_1+p_1p_2q_1^2\right)
   -\theta^{\infty} _2 \tilde{t} H_{\rm III}(D_7)(-\theta_1^0;\tilde{t};q_1,p_2)+p_1p_2q_1^2(\theta_2^0-\theta_2^{\infty}),       
\\
H_{\rm KFS}^{\frac{4}{3}+\frac{3}{2}}
= {}&
H_{\rm III}(D_7)(\theta_1^{\infty};\tilde{t};q_1,p_1)
+H_{\rm III}(D_7)(\delta-\theta_1^{\infty};\tilde{t};q_2,p_2)
-\frac{1}{\tilde{t}}p_1q_1p_2q_2-\left(\frac{p_2}{q_1}+p_1+p_2\right),
\\
G_{\rm KFS}^{\frac{4}{3}+\frac{3}{2}}
= {}& 
\frac{1}{q_1}
\left(
p_2 \left(\theta_1^{\infty}+p_1 q_1-p_2 q_2\right) \left(\tilde{t}-p_1
   q_1^2 q_2\right)+\left(p_1-p_2\right) q_2 q_1^2-\tilde{t}
   \right),
\\ 
H_{\rm KFS}^{\frac{3}{2}+\frac{3}{2}}
= {}&
H_{\rm III}(D_7)(\theta^{\infty}_1-\theta^{\infty}_2;\tilde{t};q_1,p_1)
+H_{\rm III}(D_7)(\delta-\theta^{\infty}_1;\tilde{t};q_2,p_2)
-\frac{1}{\tilde{t}}p_1q_1p_2q_2-(p_1p_2+p_1+p_2),
\\
G_{\rm KFS}^{\frac{3}{2}+\frac{3}{2}}
= {}&
\left(
p_1 p_2q_1-p_2^2 q_2+\theta^{\infty} _1 p_2-1\right)\left(-p_1 \left(q_1q_2-\tilde{t}\right)+\theta^{\infty} _2 q_2\right)
-p_2 \left(q_1 q_2-\tilde{t}\right),
\\
H_{\rm KFS}^{\frac{4}{3}+\frac{4}{3}}
= {}&
\frac{1}{\tilde{t}}
\left(p_1^2q_1^2+\delta q_1p_1-q_1-\frac{\tilde{t}}{q_1}\right)
+H_{\rm III}(D_8)(\tilde{t};q_2,p_2)
+\frac{1}{\tilde{t}}\left(-p_1q_1p_2q_2+\frac{q_1q_2}{\tilde{t}}+q_1+q_2\right),
\\   
G_{\rm KFS}^{\frac{4}{3}+\frac{4}{3}}
= {}& 
\frac{1}{\tilde{t} q_1q_2}
\left(  
\left(p_1
   q_1-p_2 q_2\right) \left(p_1 p_2 q_2^2 q_1^2 \tilde{t}+p_1
   q_2 q_1^2 \tilde{t}+q_2^2 q_1^2\right)
   +\tilde{t}^2 \left(p_1 q_1^2-p_2 q_2^2-q_2\right)
   \right).   
\end{align*}
There are also 4 systems that are ramified derived from $D_6$-Sasano system.
\begin{align*}
H_{\rm KSs}^{\frac{3}{2}+2}
= {}& 
H_{\rm III}(D_7)(\theta_0+2\theta^{\infty}_2;\tilde{t};q_1,p_1)
+H_{\rm III}(D_7)(-\theta_0;\tilde{t};q_2,p_2)
+\frac{1}{\tilde{t}}(2p_2q_1(p_1q_1-\theta_0-\theta^{\infty}_1)-q_1),
\\
G_{\rm KSs}^{\frac{3}{2}+2}
= {}&
   p_1^2 p_2^2 q_1^4
   -p_1^2 q_1^3
   -2 p_1^2 p_2^2 q_2 q_1^3
   -2 p_1 p_2^2 \theta _0 q_1^3
   +p_1^2 p_2 \theta _0 q_1^3
   -2 p_1 p_2^2 \theta^{\infty} _1 q_1^3
   +\tilde{t} p_1 p_2^2 q_1^2
   +p_1^2 p_2^2 q_2^2 q_1^2
   \\
   &   
   +p_2^2 \theta _0^2 q_1^2
   -p_1 p_2 \theta _0^2 q_1^2
   +p_2^2 (\theta^{\infty} _1)^2 q_1^2
   +p_1^2 q_2 q_1^2
   +p_1 \theta _0 q_1^2
   +p_1 p_2^2 q_2 \theta _0 q_1^2
   -p_1^2 p_2 q_2 \theta _0 q_1^2
   +p_1 \theta^{\infty} _1 q_1^2
   \\
   &   
   +2 p_1 p_2^2 q_2 \theta^{\infty} _1 q_1^2
   +2 p_2^2 \theta _0 \theta^{\infty} _1 q_1^2
   -p_1 p_2 \theta _0 \theta^{\infty} _1 q_1^2
   -p_1 \theta^{\infty} _2 q_1^2
   -2 p_1 p_2^2 q_2 \theta^{\infty} _2 q_1^2
   +p_1 p_2 \theta _0 \theta^{\infty} _2 q_1^2
   \\
   &   
   +p_2^2 q_2 \theta _0^2 q_1
   -p_1 p_2 q_2 \theta _0^2 q_1
   -\tilde{t} p_1 q_1
   -2 \tilde{t} p_1 p_2^2 q_2 q_1
   -\tilde{t} p_2^2 \theta _0 q_1
   +p_1 p_2^2 q_2^2 \theta _0 q_1
   +\tilde{t} p_1 p_2 \theta _0 q_1
   \\
   &   
   +p_1 q_2 \theta _0 q_1
   -\tilde{t} p_2^2 \theta^{\infty} _1 q_1
   +p_2^2 q_2 \theta _0 \theta^{\infty} _1 q_1
   +2 p_1 p_2^2 q_2^2 \theta^{\infty} _2 q_1
   -p_2 \theta _0^2 \theta^{\infty} _2 q_1
   +2 p_1 q_2 \theta^{\infty} _2 q_1
   \\
   & 
   +2 p_2^2 q_2 \theta _0 \theta^{\infty} _2 q_1
   -2 p_1 p_2 q_2 \theta _0 \theta^{\infty} _2 q_1
   +\theta _0 \theta^{\infty} _2 q_1
   +2 p_2^2 q_2 \theta^{\infty} _1 \theta^{\infty} _2 q_1
   -p_2 \theta _0 \theta^{\infty} _1 \theta^{\infty} _2 q_1
   +\theta^{\infty} _1 \theta^{\infty} _2 q_1
   \\
   &
   +\tilde{t} p_1 p_2^2 q_2^2
   +p_2^2 q_2^2 (\theta^{\infty} _2)^2
   +\tilde{t} p_2 (\theta^{\infty} _2)^2
   +q_2 (\theta^{\infty} _2)^2
   -p_2 q_2 \theta _0 (\theta^{\infty} _2)^2
   +\tilde{t} p_1 q_2
   +\tilde{t} p_2^2 q_2 \theta _0
   -\tilde{t} p_1 p_2 q_2 \theta _0
   \\
   &
   +\tilde{t} p_2^2 q_2 \theta^{\infty} _1
   -p_2 q_2 \theta _0^2 \theta^{\infty} _2
   +p_2^2 q_2^2 \theta _0 \theta^{\infty} _2
   +2 \tilde{t} p_2 \theta _0 \theta^{\infty} _2
   +q_2 \theta _0 \theta^{\infty}
   _2+\tilde{t} p_2 \theta^{\infty} _1 \theta^{\infty} _2,
\\
H_{\rm KSs}^{\frac{4}{3}+2}
= {}&
H_{\rm III}(D_7)(\theta_0+2\theta^{\infty}_2;\tilde{t};q_1,p_1)
+H_{\rm III}(D_7)(-\theta_0;\tilde{t};q_2,p_2)
-\frac{1}{\tilde{t}}\left(2p_2q_1+q_1+\tilde{t}p_2\right),
\\
G_{\rm KSs}^{\frac{4}{3}+2}
= {}&
   -3 \theta _0^2 \theta^{\infty} _1 p_2 q_2
   -\theta _0 (\theta^{\infty} _1)^2 p_2 q_2
   +3 \theta _0 \theta^{\infty} _1 p_2^2 q_2^2
   +\theta _0 \theta^{\infty} _1 p_2 q_1
   +2 \theta _0 \theta^{\infty} _1 p_1 p_2 q_1 q_2
   -2 \theta _0^3 p_2 q_2
   \\
   &
   +2 \theta _0^2 p_2^2 q_2^2
   +2 \theta _0^2 p_2 q_1
   +3 \theta _0^2 p_1 p_2 q_1 q_2
   -\theta _0 p_1 p_2 q_1^2
   -3 \theta _0 p_1 p_2^2 q_1 q_2^2
   -\theta _0 p_1^2 p_2 q_1^2 q_2
   \\
   &
   -3 \theta _0 p_2^2 q_1 q_2
   -3 \theta _0 p_1 q_1 q_2
   +(\theta^{\infty} _1)^2 p_2^2 q_2^2
   -2 \theta^{\infty} _1 p_1 p_2^2 q_1 q_2^2
   -2 \theta^{\infty} _1 p_2^2 q_1 q_2
   -2 \theta^{\infty} _1 p_1 q_1 q_2
   \\
   &   
   -\theta _0 p_1 p_2 q_2 \tilde{t}
   +p_1 p_2^2 q_2^2 \tilde{t}
   +p_2^2 q_2 \tilde{t}
   +p_1 q_2 \tilde{t}
   +p_2^2 q_1^2
   +p_1 q_1^2+p_1^2 p_2^2 q_1^2 q_2^2
   +p_1^2 q_1^2 q_2
   \\
   &   
   +2 p_1 p_2^2 q_1^2 q_2
   -2 \theta _0 p_2 \tilde{t}
   -\theta^{\infty} _1 p_2 \tilde{t}
   +3 \theta _0 \theta^{\infty} _1 q_2
   +2 \theta _0^2 q_2
   -2 \theta _0 q_1
   +(\theta^{\infty} _1)^2 q_2
   -\theta^{\infty} _1 q_1,
\\
H_{\rm KSs}^{\frac{5}{4}+2}
= {}&
H_{\rm III}(D_8)(\tilde{t};q_1,p_1)+H_{\rm III}(D_7)(-\theta_0,\tilde{t};q_2,p_2)
+2\frac{p_2}{q_1}-\frac{(\theta_0+1-\delta)p_1q_1}{\tilde{t}}+\frac{1}{q_1}-p_2,
\\
G_{\rm KSs}^{\frac{5}{4}+2}
= {}&
   \frac{1}{q_1^2}
   \biggl(
   p_2^2 
   \left(
   p_1 q_2 q_1^2 \left(2
   \tilde{t}-\theta _0 q_1 q_2\right)+p_1^2 q_2^2
   q_1^4-\theta _0 q_2 q_1 \tilde{t}-q_2^2
   q_1^3+\tilde{t}^2
   \right)
   +q_1^3q_2(p_1^2q_1-1)
   \\
   &     
   -p_2 q_1^2 
   \left(
   \theta _0 p_1^2 q_2 q_1^2
   +\theta _0 p_1 
   \left( \tilde{t}-\theta _0 q_1 q_2 \right)
   -\theta _0 q_2 q_1+\tilde{t}
   \right)
   +q_1^2 p_1 
   \left( \tilde{t}-\theta_0 q_1 q_2 \right)
    \biggr),
     \\
H_{\rm KSs}^{\frac{3}{2}+\frac{5}{4}}
= {}&
\frac{1}{\tilde{t}}\left(p_1^2q_1^2+\delta q_1 p_1-q_1-\tilde{t}/q_1 \right)
+H_{\rm III}(D_8)(\tilde{t};q_2,p_2)
-2\frac{q_1q_2}{\tilde{t}^2}+\frac{q_1+q_2}{\tilde{t}}
,
\\   
G_{\rm KSs}^{\frac{3}{2}+\frac{5}{4}}
= {}&
 \frac{q_1^2q_2^2}{\tilde{t}^2}
 -\frac{p_1^2 q_1^2 \tilde{t}}{q_2}
 +q_1+q_2
+\frac{\tilde{t}^2}{q_1 q_2}
-\frac{1}{4q_1 \tilde{t}}\left(2 p_2q_2+1\right) \left(\tilde{t} \left(2 p_2 q_2+1\right)
   \left(\tilde{t}-p_1^2 q_1^3\right)+4 p_1 q_2 q_1^3\right).
\end{align*}
We have three ramified systems of matrix Painlev\'e equations. 
$G_{{\rm III}(D_7)}^{\rm Mat}$ and $G_{{\rm III}(D_8)}^{\rm Mat}$ are too long and $H_{\rm I}^{\rm Mat}$ is already written in the main part of this paper.
So we skip writing first integrals of autonomous matrix Painlev\'e equations.

\subsection{ Local data of linear equations}\label{ap:linear}
For the classification of linear equations, we need to discern the types of linear equations.
In this subsection, we explain the notation to express spectral types for ramified equations, which is  introduced by Kawakami~\cite{2016arXiv160803927K}. 
The Fuchsian case is explained in \ref{subsec:imd}.

Now we explain the way to obtain the formal canonical form around each non-Fuchsian singular point.
We also explain that these canonical forms can be expressed by the refining sequences of partition. 
Let us assume that the coefficient matrix $A(x)$ of the equation has a singularity at the origin, and that $A(x)$  is expanded in the Laurent series as follows: 
\begin{equation}\label{ex}
\frac{dY}{dx}=
\left(\frac{A^0}{x^{r+1}}+\frac{A^1}{x^r}+\cdots \right)Y.
\end{equation}
Here, $A^j \ (j=0,1,\ldots)$ are $m\times m$ matrices. 
We assume that $A^0$ is diagonalizable.
With an appropriate choice of the gauge matrix, we can assume that $A^{0}$ is diagonal and that its eigenvalues are $t^0_1,\ldots,t^0_m$. 
When $r=0$, then the origin is a regular singular point.
Let us assume that $r>0$.
If $t^0_i\neq t^0_j \quad (1\le i \le l,\ l+1\le j \le m)$, then  a gauge transformation by a formal power series  $Y=P(x)Z$ \ $(P(x)=I+P_1 x+P_2 x^2+\cdots)$ leads to the following form:
\[
 \frac{dZ}{dx}=
 \left(\frac{B^0}{x^{r+1}}+\frac{B^1}{x^r}+\cdots \right)Z.
\]
Here,
 we can transform $B^{i}$ into the following form:
\[
B^i=
\begin{pmatrix}
B^i_{11} & O \\
O & B^i_{22}
\end{pmatrix},\ 
B^i_{11}\in M_l(\mathbb{C}),\ 
B^i_{22}\in M_{m-l}(\mathbb{C}).
\]
With  successive application of this process, the equation (\ref{ex}) is formally decomposed to direct sum of equations whose leading terms have only one eigenvalues  respectively.
When the leading term of the block is diagonalizable, that is when it is scalar matrix, then this part can be canceled by a gauge transformation by a scalar function, so that the equation is reduced to a equation with smaller $r$.

\begin{rem}\label{rem:ram}
 When $A^{0}$ is not diagonalizable,
 in order to decompose the system into equations of smaller sizes,
 we need to take an appropriate covering $x=\xi^k$.
 In that case, the transformation matrix $P(x)$ is a Puiseux series in $x$.
 The equations  with this property are called ramified.
 When we do not need to take coverings, that is when $k=1$, we say that the equations are unramified.
 \qed
\end{rem}
\subsubsection{Unramified non-Fuchsian case}
When the equation (\ref{ex}) is unramified, it can be transformed into the following form:
\[
\frac{dY}{dx}=
\left(\frac{T_0}{x^{r+1}}+\frac{T_1}{x^r}+\cdots+\frac{T_r}{x}+\cdots 
\right)Y.
\]
We can assume that $T_j$'s are diagonal matrices and that $T_0=A_0$.
Furthermore, we can eliminate the 
regular terms
 by an appropriate diagonal matrix with formal power series components.
Thus, the equation (\ref{ex}) can be transformed into the following form by gauge transformation of a formal power series:
\begin{equation}
 \frac{dY}{dx}=
 \left(\frac{T_0}{x^{r+1}}+\frac{T_1}{x^r}+\cdots+\frac{T_r}{x} \right)Y.
 \label{eqn:loc_norm_form}
\end{equation}
If we write the diagonal components of $T_{i}$ as $t^i_j\ (j=1,\ldots,m)$, then a canonical form around the origin can be described by the following data:
\[
\begin{array}{c}
x=0 \\
\overbrace{\begin{array}{cccc}
   t^0_1 & t^1_1 & \ldots & t^r_1 \\
   \vdots & \vdots & & \vdots \\
   t^0_m & t^1_m & \ldots & t^r_m
           \end{array}}
\end{array}_{.}
\]
We write down such formal canonical forms for each singular point, and put them together.
This kind of table is called the Riemann scheme of the linear equation. 
As we can see from the procedure to obtain the canonical form,  the leftmost column splits into several groups as equivalence class of values.
In the second column from the left, these groups splits further, and so on, we get a nested columns.

We describe such nesting structure by refining sequences of partitions of $m$ and call it the spectral type of the singular point.
We line up such spectral types of each singular point, and separate them by commas. 
We call it the spectral type of the equation. 
In such a case,
\[
\exp\left(-\frac{T_{0}}{rx^r}+\dots-\frac{T_{r-1}}{x}\right)x^{T_{r}}
\]
is the fundamental solution matrix for the formal canonical form (\ref{eqn:loc_norm_form}).
The degree $r$ of the polynomial is called the Poincar\'e rank of the singular point.
When the singular point is of regular type, the Poincar\'e rank is $0$.
When the equation is ramified, this part is a polynomial in $x^{-1/k}$, and the Poincar\'e rank is non-integer rational number.
If the thing we want to express is just the Poincar\'e ranks at each singularity, we attach Poincar\'e rank plus 1 to each singularity, and line them up, and separate them by $+$ signs. 
When the equation is unramified, Poincar\'e rank plus 1 is as same as numbers of columns appeared in refining sequences of partitions at each singularities.
\begin{eg}
For instance, let us consider the following normal form:
\vspace{-1mm}
\begin{equation*}
\frac{d\hat{Y}(x)}{dx}=\{\frac{1}{x^{3}}\begin{pmatrix}
a  &  0  &  0  &0\\
0  &  a  &  0  &0\\
0  &  0  &  b  &0\\
0  &  0  &  0  &b
\end{pmatrix}
+\frac{1}{x^2}\begin{pmatrix}
c  &  0  &  0  &0\\
0  &  c  &  0  &0\\
0  &  0  &  d  &0\\
0  &  0  &  0  &  e
\end{pmatrix}
+\frac{1}{x}\begin{pmatrix}
f   &  0  &  0  &0\\
0  &  g  &  0  &0\\
0  &  0  &  h  &0\\
0  &  0  &  0  &i
\end{pmatrix}
\}\hat{Y}.
\end{equation*}
\vspace{-1mm}
We align the diagonal entries as
$  \left\{ \begin{matrix}
     a & a & b & b\\
     c & c & d & e \\
     f & g & h & i
    \end{matrix}\right. .
$
We express the degeneracy of the eigenvalues by
$\begin{Bmatrix}
22\\
211\\
1111
\end{Bmatrix}.
$
Each row expresses a partition of the matrix size $m$.
The partitions in the lower rows are a refinement of a partition in the upper rows.
\end{eg}
In order to express the degeneracy of the eigenvalues briefly, we use parentheses.
Firstly, write the finest partition of $m$ in the lowest row, which expresses the degeneracy of the eigenvalue of $T^{(i)}$.
Secondly, put the numbers that are grouped together in the second lowest partition in  parentheses.
We continue this process until the highest row.

\begin{eg}
The local data of the example above can be expressed concisely using the parentheses.
\begin{align*}
\begin{array}{c}
 x=0  \\
\overbrace{\begin{array}{ccc}
   a & c & f   \\
   a & c & g \\
   b & d & h \\
   b & e & i 
           \end{array}}
\end{array}
=
 \left\{ \begin{matrix}
     a & a & b & b\\
     c & c & d & e \\
     f & g & h & i
    \end{matrix}\right.
\longrightarrow    
\begin{Bmatrix}
22\\
211\\
1111
\end{Bmatrix}
\longrightarrow
((11))((1)(1)),
\hspace{3mm}
\begin{tabular}{|c|c|}
\hline
level 2 & ((11))((1)(1))
\\
level 1 & (11)(1)(1)
\\
level 0 & 1111
\\
\hline
\end{tabular}_{.}
\end{align*}

The way to restore the degeneracy of eigenvalues from the symbol $((11))((1)(1))$ is as follows.
\begin{itemize}
\item Add the numbers in the outermost parenthesis $\rightarrow$ 22
\item Add the numbers in the inner parenthesis $\rightarrow$211
\item Write the numbers in the innermost parentheses $\rightarrow$1111
\end{itemize}
\end{eg}
We express the types of linear equations by aligning such data for each singular point.
\subsubsection{Ramified  case}
We have the following formal normal
form at each singularities~(Hukuhara~\cite{zbMATH03026099}, Levelt~\cite{zbMATH03477550} and Turrittin~\cite{zbMATH03109720}).
Let us assume that $x=0$ is  an irregular singular point.
Then, there exist a positive integer $q$, rational numbers with the common denominator $q$ such that $r_0<r_1<\dots<r_{n-1}<r_{n}=-1$, diagonal matrices $T_0,\dots,T_{n}$, a transformation $z=F(x^{1/q})$ in the class of formal series in $x^{1/q}$, such that the transformed system have the following form;
\begin{align}
\frac{dz}{dx}=(T_0x^{r_0}+\dots+T_{n-1}x^{r_{n-1}}+T_{n}x^{-1})z.
\end{align}
Let us assume that the diagonal matrix $T_{k}$ has $t_i^{k}$ for $i=1,\dots, m$ as diagonal components.
We express the local data by the following table:
\[
\begin{array}{c}
\hspace{5mm} x=0 \hspace{2mm} \left(\frac{n}{q}\right) \\
\overbrace{\begin{array}{cccc}
   t^0_1 & t^1_1 & \ldots & t^n_1 \\
   \vdots & \vdots & & \vdots \\
   t^0_m & t^1_m & \ldots & t^n_m
           \end{array}}
\end{array}_{.}
\]
Let us introduce a way to express such local data compactly with examples.\footnote{Kawakami~\cite{2016arXiv160803927K} devised such notation.}
Let us assume the following normal form;
\[
\frac{dz}{dx}
=
\left\{
c_0
\Omega
x^{-\frac{8}{3}}
+
c_1
\Omega^2
x^{-\frac{7}{3}}
+
c_2I_3x^{-2}
+
c_3
\Omega
x^{-\frac{5}{3}}
+
c_4
\Omega^2
x^{-\frac{4}{3}}
+c_5I_3x^{-1}
\right\}z,
\]
where $\Omega=\operatorname{diag}(1,\omega,\omega^2)$ and $\omega$ is a primitive third root of unity.
This normal form can be expressed as 
\[
\begin{array}{c}
\hspace{5mm} x=0 \hspace{2mm} \left(\frac{5}{3}\right) \\
\overbrace{\begin{array}{cccccc}
   c_0 & c_1 & c_2 & c_3& c_4  & c_5\\
   c_0\omega & c_1\omega & c_2 & c_3\omega& c_4\omega^2 & c_5\\
   c_0\omega^2 & c_1\omega & c_2 & c_3\omega^2& c_4\omega & c_5
           \end{array}}
\end{array}.
\]
Two systems in the lower rows 
$\frac{dz_2}{dx}=\left(c_0\omega x^{-\frac{8}{3}}+\dots\right)z_2$ and
$\frac{dz_3}{dx}=\left(c_0\omega x^{-\frac{8}{3}}+\dots\right)z_3$
can be obtained by the first row 
$\frac{dz_1}{dx}=\left(c_0 x^{-\frac{8}{3}}+\dots\right)z_1$
upon replacement 
$x^{\frac{1}{3}}\mapsto \omega x^{\frac{1}{3}}\mapsto \omega^2 x^{\frac{1}{3}}$. 
Since we have 3 copies of the first equation, we express the local data as $(((((1)))))_{3}$.
\begin{eg}
We show  some other examples.
\begin{align*}
\begin{array}{c}
\hspace{5mm} x=0 \hspace{2mm} \left(\frac{1}{2}\right) \\
\overbrace{\begin{array}{cc}
   \alpha &  \beta \\
   -\alpha & \beta \\
   0 & \gamma \\
   0 & \epsilon
           \end{array}}
\end{array}
\hspace{-3mm}
\to(1)_{2}11,
\begin{array}{c}
\hspace{5mm} x=0 \hspace{2mm} \left(\frac{1}{2}\right) \\
\overbrace{\begin{array}{cc}
   \alpha &  \beta \\
   \alpha\omega & \beta \\
   \alpha \omega^2 & \beta \\
   0 & \gamma
           \end{array}}
\end{array}
\hspace{-3mm}
\to(1)_{3}1,
\begin{array}{c}
\hspace{5mm} x=0 \hspace{2mm} \left(\frac{1}{2}\right) \\
\overbrace{\begin{array}{cc}
   \alpha &  \beta \\
   \alpha & \beta \\
   -\alpha & \beta \\
   -\alpha & \beta
           \end{array}}
\end{array}
\hspace{-3mm}
\to(2)_{2},
\begin{array}{c}
\hspace{5mm} x=0 \hspace{2mm} \left(\frac{1}{2}\right) \\
\overbrace{\begin{array}{cc}
   \alpha &  \beta \\
   \alpha & \gamma \\
   -\alpha  & \beta \\
   -\alpha & \gamma
           \end{array}}
\end{array}
\hspace{-3mm}
\to(11)_{2}.
\end{align*}
\end{eg}

\subsection{The dual graphs of the singular fibers}\label{graph}
We list the dual graphs of singular fibers appeared in the table.
The numbers in circles indicate multiplicities of the components.
We adopt, as in Ogg~\cite{MR0201437}, the following symbol for component $\Gamma$ of singular fibers (Table~\ref{table:comp}).
$K_{X}$ is the canonical divisor of surface $X$.
The matrices next to or below the dual graphs are the monodromy~\cite{MR0369362}.

\begin{table}[ht]
\centering
\begin{tabular}{|c|c|c|c|}
\hline
Symbol & Genus & $\Gamma^2$ & $\Gamma\cdot K_{X}$\\ \hline
A & 1 & -1 & 1\\ \hline
B & 0 & -3 & 1\\ \hline
C & 1 & -2 & 2\\ \hline
D & 0 & -4 & 2\\ \hline
none & 0 & -2 & 0\\
\hline
\end{tabular}
\caption{Components of singular fibers}
\label{table:comp}
\end{table}

\newpage
\begin{multicols}{2}
\noindent	
${\rm I^{*}_{0-0-0}}\colon H_{{\rm Gar}}^{1+1+1+1+1}$
\[
\begin{xy}
(0,10) *+[Fo]{1}="A";
 (-9,4)  *+[Fo]{1}="B";
(0,0) *+[Fo]{2B}="C";
(9,4) *+[Fo]{1}="D" ;
(-9,-4) *+[Fo]{1}="H" ;
(9,-4) *+[Fo]{1}="I"; 
(0,-10) *+[Fo]{1}="J";
\ar@{-} "A";"C"
\ar@{-} "B";"C";
\ar@{-} "C";"D";
\ar@{-} "C";"H";
\ar@{-}"C";"I";
\ar@{-}"C";"J";
\end{xy}
\hspace{4mm}
\begin{matrix}
\begin{pmatrix}
-1 & 0 & 0 & 0
\\
0 & -1 & 0 & 0
\\
0 & 0 & -1 & 0
\\
0 & 0 & 0 & -1
\end{pmatrix}
\end{matrix}
\]

\columnbreak

${\rm I^{*}_{1-0-0}}\colon H_{{\rm Gar}}^{2+1+1+1}$
\[
\begin{xy}
(7,-7) *+[Fo]{1}="I";
(-7,-7)  *+[Fo]{1}="K";
(-7,7)  *+[Fo]{1}="L";
 (0,0)  *+[Fo]{2B}="A";
(10,0)  *+[Fo]{2}="C";
(7,7) *+[Fo]{1}="D";
(17,7) *+[Fo]{1}="E";
(17,-7) *+[Fo]{1}="G";
\ar@{-} "A";"C";
\ar@{-} "A";"D";
\ar@{-} "C";"E";
\ar@{-} "C";"G";
\ar@{-} "I";"A";
\ar@{-} "K";"A";
\ar@{-} "L";"A";
\end{xy}
\hspace{4mm}
\begin{pmatrix}
-1 & 0 & 0 & 0
\\
0 & -1 & 0 & -1
\\
0 & 0 & -1 & 0 
\\
0 & 0 & 0 & -1 
\end{pmatrix}
\]
\end{multicols}

\begin{multicols}{2}
\noindent	
${\rm I^{*}_{2-0-0}}\colon H_{{\rm Gar}}^{\frac{3}{2}+1+1+1}$
\[
\begin{xy}
(7,-7) *+[Fo]{1}="I";
(-7,-7)  *+[Fo]{1}="K";
(-7,7)  *+[Fo]{1}="L";
 (0,0)  *+[Fo]{2B}="A";
(10,0) *+[Fo]{2}="B";
(20,0)  *+[Fo]{2}="C";
(7,7) *+[Fo]{1}="D";
(27,7) *+[Fo]{1}="E";
(27,-7) *+[Fo]{1}="G";
\ar@{-} "A";"B";
\ar@{-} "B";"C";
\ar@{-} "A";"D";
\ar@{-} "C";"E";
\ar@{-} "C";"G";
\ar@{-} "I";"A";
\ar@{-} "K";"A";
\ar@{-} "L";"A";
\end{xy}
\hspace{4mm}
\begin{pmatrix}
-1 & 0 & 0 & 0
\\
0 & -1 & 0 & -2
\\
0 & 0 & -1 & 0 
\\
0 & 0 & 0 & -1 
\end{pmatrix}
\]

\columnbreak

\noindent	
${\rm I^{*}_{1-1-0}}\colon H_{{\rm Gar}}^{2+2+1}$
\[
\begin{xy}
(-17,7) *+[Fo]{1}="I";
(-17,-7) *+[Fo]{1}="J";
(-10,0)  *+[Fo]{2}="K";
(0,10)  *+[Fo]{1}="L";
 (0,0)  *+[Fo]{2B}="A";
(10,0)  *+[Fo]{2}="C";
(0,-10) *+[Fo]{1}="F";
(17,7) *+[Fo]{1}="E";
(17,-7) *+[Fo]{1}="D";
\ar@{-} "A";"C";
\ar@{-} "C";"D";
\ar@{-} "C";"E";
\ar@{-} "A";"F";
\ar@{-} "A";"K";
\ar@{-} "I";"K";
\ar@{-} "J";"K";
\ar@{-} "L";"A";
\end{xy}
\hspace{4mm}
\begin{pmatrix}
-1 & 0 & -1 & 0
\\
0 & -1 & 0 & -1
\\
0 & 0 & -1 & 0 
\\
0 & 0 & 0 & -1 
\end{pmatrix}
\]

\columnbreak

\end{multicols}

\begin{multicols}{2}
\noindent	
${\rm I^{*}_{1-2-0}}\colon H_{{\rm Gar}}^{\frac{3}{2}+2+1}$
\[
\begin{xy}
(-17,7) *+[Fo]{1}="I";
(-17,-7) *+[Fo]{1}="J";
(-10,0)  *+[Fo]{2}="K";
(0,10)  *+[Fo]{1}="L";
 (0,0)  *+[Fo]{2B}="A";
(10,0) *+[Fo]{2}="B";
(20,0)  *+[Fo]{2}="C";
(0,-10) *+[Fo]{1}="F";
(27,7) *+[Fo]{1}="E";
(27,-7) *+[Fo]{1}="D";
\ar@{-} "A";"B";
\ar@{-} "B";"C";
\ar@{-} "C";"D";
\ar@{-} "C";"E";
\ar@{-} "A";"F";
\ar@{-} "A";"K";
\ar@{-} "I";"K";
\ar@{-} "J";"K";
\ar@{-} "L";"A";
\end{xy}
\hspace{4mm}
\begin{pmatrix}
-1 & 0 & -2 & 0
\\
0 & -1 & 0 & -1
\\
0 & 0 & -1 & 0 
\\
0 & 0 & 0 & -1 
\end{pmatrix}
\]

\columnbreak

\end{multicols}

\begin{multicols}{2}
\noindent	
${\rm I^{*}_{2-2-0}}\colon H_{{\rm Gar}}^{\frac{3}{2}+\frac{3}{2}+1}$
\[
\begin{xy}
(-27,7) *+[Fo]{1}="I";
(-27,-7) *+[Fo]{1}="J";
(-20,0)  *+[Fo]{2}="K";
(0,10)  *+[Fo]{1}="L";
(-10,0) *+[Fo]{2}="G";
 (0,0)  *+[Fo]{2B}="A";
(10,0) *+[Fo]{2}="B";
(20,0)  *+[Fo]{2}="C";
(0,-10) *+[Fo]{1}="F";
(27,7) *+[Fo]{1}="E";
(27,-7) *+[Fo]{1}="D";
\ar@{-} "A";"B";
\ar@{-} "B";"C";
\ar@{-} "C";"D";
\ar@{-} "C";"E";
\ar@{-} "A";"F";
\ar@{-} "A";"G";
\ar@{-} "I";"K";
\ar@{-} "J";"K";
\ar@{-} "K";"G";
\ar@{-} "L";"A";
\end{xy}
\hspace{4mm}
\begin{pmatrix}
-1 & 0 & -2 & 0
\\
0 & -1 & 0 & -2 
\\
0 & 0 & -1 & 0 
\\
0 & 0 & 0 & -1 
\end{pmatrix}
\]
\columnbreak

\end{multicols}

\begin{multicols}{2}
\noindent	
${\rm I^{*}_{0}-IV^{*}-(-1)}\colon H_{{\rm Gar}}^{3+1+1}$
\[
\begin{xy}
(0,-10) *+[Fo]{1}="I";
(-10,0)  *+[Fo]{1}="K";
(0,10)  *+[Fo]{1}="L";
 (0,0)  *+[Fo]{2B}="A";
(10,0) *+[Fo]{3}="B";
(20,0)  *+[Fo]{2}="C";
(10,10) *+[Fo]{2}="D";
(10,20) *+[Fo]{1}="F";
(30,0) *+[Fo]{1}="E";
\ar@{-} "A";"B";
\ar@{-} "B";"C";
\ar@{-} "B";"D";
\ar@{-} "C";"E";
\ar@{-} "D";"F";
\ar@{-} "I";"A";
\ar@{-} "K";"A";
\ar@{-} "L";"A";
\end{xy}
\hspace{4mm}
\begin{pmatrix}
-1 & 0 & -1 & 0
\\
0 & -1 & 0 & 0 
\\
1 & 0 & 0 & 0 
\\
0 & 0 & 0 & -1 
\end{pmatrix}
\]

\columnbreak

\end{multicols}

\begin{multicols}{2}
\noindent	
${\rm I^{*}_{0}-III^{*}-(-1)}\colon H_{{\rm Gar}}^{\frac{5}{2}+1+1}$
\[
\begin{xy}
(0,-10) *+[Fo]{1}="I";
(-10,0)  *+[Fo]{1}="K";
(0,10)  *+[Fo]{1}="L";
 (0,0)  *+[Fo]{2B}="A";
(10,0) *+[Fo]{3}="B";
(20,0)  *+[Fo]{4}="C";
(20,10) *+[Fo]{2}="D";
(30,0) *+[Fo]{3}="E";
(40,0) *+[Fo]{2}="G";
(50,0) *+[Fo]{1}="H";
\ar@{-} "A";"B";
\ar@{-} "B";"C";
\ar@{-} "C";"D";
\ar@{-} "C";"E";
\ar@{-} "E";"G";
\ar@{-} "I";"A";
\ar@{-} "G";"H";
\ar@{-} "K";"A";
\ar@{-} "L";"A";
\end{xy}
\hspace{4mm}
\begin{pmatrix}
0 & 0 & -1 & 0
\\
0 & -1 & 0 & 0 
\\
1 & 0 & 0 & 0 
\\
0 & 0 & 0 & -1 
\end{pmatrix}
\]

\columnbreak

\end{multicols}

\begin{multicols}{2}
\noindent	
${\rm III^*-II^{*}_{0}-(-1)}\colon H_{{\rm Gar}}^{4+1}$
\[
\begin{xy}
(-10,0)  *+[Fo]{1}="K";
 (0,0)  *+[Fo]{2}="A";
(10,0) *+[Fo]{3}="B";
(20,0)  *+[Fo]{4}="C";
(20,10) *+[Fo]{2B}="D";
(13,16)  *+[Fo]{1}="L";
(27,16) *+[Fo]{1}="J";
(30,0) *+[Fo]{3}="E";
(40,0) *+[Fo]{2}="F";
(50,0) *+[Fo]{1}="G";
\ar@{-} "A";"B";
\ar@{-} "B";"C";
\ar@{-} "C";"D";
\ar@{-} "C";"E";
\ar@{-} "E";"F";
\ar@{-} "F";"G";
\ar@{-} "D";"L";
\ar@{-} "D";"J";
\ar@{-} "K";"A";
\end{xy}
\hspace{4mm}
\begin{pmatrix}
0 & 0 & -1 & 0
\\
0 & -1 & -1 & -1
\\
1 & 0 & 0 & 1 
\\
0 & 0 & 0 & -1 
\end{pmatrix}
\]

\columnbreak

\end{multicols}

\begin{multicols}{2}
\noindent	
${\rm II^*-II^{*}_{0}-(-1)}\colon H_{{\rm Gar}}^{\frac{7}{2}+1}$
\[
\begin{xy}
(-27,-7)  *+[Fo]{1}="L";
(-27,7) *+[Fo]{1}="J";
(-20,0) *+[Fo]{2B}="I";
(-10,0)  *+[Fo]{4}="K";
 (0,0)  *+[Fo]{6}="A";
(10,0) *+[Fo]{5}="B";
(20,0)  *+[Fo]{4}="C";
(0,10) *+[Fo]{3}="D";
(30,0) *+[Fo]{3}="E";
(40,0) *+[Fo]{2}="F";
(50,0) *+[Fo]{1}="G";
\ar@{-} "A";"B";
\ar@{-} "B";"C";
\ar@{-} "A";"D";
\ar@{-} "C";"E";
\ar@{-} "E";"F";
\ar@{-} "F";"G";
\ar@{-} "I";"L";
\ar@{-} "I";"J";
\ar@{-} "K";"I";
\ar@{-} "K";"A";
\end{xy}
\hspace{4mm}
\begin{pmatrix}
0 & 0 & -1 & 0
\\
0 & -1 & 1 & 0
\\
1 & 0 & 1 & -1 
\\
0 & 0 & 0 & -1 
\end{pmatrix}
\]

\columnbreak

\end{multicols}

\begin{multicols}{2}
\noindent	
${\rm IV^{*}-I^{*}_{1}-(-1)}\colon H_{{\rm Gar}}^{3+2}$
\[
\begin{xy}
(-18,5) *+[Fo]{1}="I";
(-18,-5) *+[Fo]{1}="J";
(-10,0)  *+[Fo]{2}="K";
(0,10)  *+[Fo]{1}="L";
 (0,0)  *+[Fo]{2B}="A";
(10,0) *+[Fo]{3}="B";
(20,0)  *+[Fo]{2}="C";
(10,10) *+[Fo]{2}="D";
(10,20) *+[Fo]{1}="F";
(30,0) *+[Fo]{1}="E";
\ar@{-} "A";"B";
\ar@{-} "B";"C";
\ar@{-} "B";"D";
\ar@{-} "C";"E";
\ar@{-} "D";"F";
\ar@{-} "I";"K";
\ar@{-} "J";"K";
\ar@{-} "K";"A";
\ar@{-} "L";"A";
\end{xy}
\hspace{4mm}
\begin{pmatrix}
-1 & 0 & -1 & 0
\\
0 & -1 & 0 & -1 
\\
1 & 0 & 0 & 0 
\\
0 & 0 & 0 & -1 
\end{pmatrix}
\]

\columnbreak

\end{multicols}

\begin{multicols}{2}
\noindent	
${\rm IV^{*}-I^{*}_{2}-(-1)}\colon H_{{\rm Gar}}^{3+\frac{3}{2}}$
\[
\begin{xy}
(-28,5) *+[Fo]{1}="I";
(-28,-5) *+[Fo]{1}="J";
(-20,0)  *+[Fo]{2}="K";
(0,10)  *+[Fo]{1}="L";
(-10,0) *+[Fo]{2}="G";
 (0,0)  *+[Fo]{2B}="A";
(10,0) *+[Fo]{3}="B";
(20,0)  *+[Fo]{2}="C";
(10,10) *+[Fo]{2}="D";
(10,20) *+[Fo]{1}="F";
(30,0) *+[Fo]{1}="E";
\ar@{-} "A";"B";
\ar@{-} "B";"C";
\ar@{-} "B";"D";
\ar@{-} "C";"E";
\ar@{-} "D";"F";
\ar@{-} "A";"G";
\ar@{-} "I";"K";
\ar@{-} "J";"K";
\ar@{-} "K";"G";
\ar@{-} "L";"A";
\end{xy}
\hspace{4mm}
\begin{pmatrix}
-1 & 0 & -1 & 0
\\
0 & -1 & 0 & -2 
\\
1 & 0 & 0 & 0 
\\
0 & 0 & 0 & -1 
\end{pmatrix}
\]

\columnbreak

\end{multicols}

\begin{multicols}{2}
\noindent	
${\rm III^*-I^{*}_{1}-(-1)}\colon H_{{\rm Gar}}^{\frac{5}{2}+2}$
\[
\begin{xy}
(-18,5) *+[Fo]{1}="I";
(-18,-5) *+[Fo]{1}="J";
(-10,0)  *+[Fo]{2}="K";
 (0,0)  *+[Fo]{2B}="A";
(10,0) *+[Fo]{3}="B";
(20,0)  *+[Fo]{4}="C";
(20,10) *+[Fo]{2}="D";
(30,0) *+[Fo]{3}="E";
(40,0) *+[Fo]{2}="F";
(50,0) *+[Fo]{1}="G";
\ar@{-} "A";"B";
\ar@{-} "B";"C";
\ar@{-} "C";"D";
\ar@{-} "C";"E";
\ar@{-} "E";"F";
\ar@{-} "F";"G";
\ar@{-} "I";"K";
\ar@{-} "J";"K";
\ar@{-} "K";"A";
\end{xy}
\hspace{4mm}
\begin{pmatrix}
0 & 0 & -1 & 0
\\
0 & -1 & 0 & -1
\\
1 & 0 & 0 & 0 
\\
0 & 0 & 0 & -1 
\end{pmatrix}
\]

\columnbreak

\end{multicols}

\begin{multicols}{2}
\noindent	
${\rm III^*-I^{*}_{2}-(-1)}\colon H_{{\rm Gar}}^{\frac{5}{2}+\frac{3}{2}}$
\[
\begin{xy}
(-28,5) *+[Fo]{1}="I";
(-28,-5) *+[Fo]{1}="J";
(-20,0)  *+[Fo]{2}="K";
(-10,0)  *+[Fo]{2}="L";
 (0,0)  *+[Fo]{2B}="A";
(10,0) *+[Fo]{3}="B";
(20,0)  *+[Fo]{4}="C";
(20,10) *+[Fo]{2}="D";
(30,0) *+[Fo]{3}="E";
(40,0) *+[Fo]{2}="F";
(50,0) *+[Fo]{1}="G";
\ar@{-} "A";"B";
\ar@{-} "B";"C";
\ar@{-} "C";"D";
\ar@{-} "C";"E";
\ar@{-} "E";"F";
\ar@{-} "F";"G";
\ar@{-} "I";"K";
\ar@{-} "J";"K";
\ar@{-} "K";"L";
\ar@{-} "L";"A";
\end{xy}
\hspace{4mm}
\begin{pmatrix}
0 & 0 & -1 & 0
\\
0 & -1 & 0 & -2 
\\
1 & 0 & 0 & 0 
\\
0 & 0 & 0 & -1 
\end{pmatrix}
\]

\columnbreak

\end{multicols}

\begin{multicols}{2}
\noindent	
${\rm II^{*}-II^{*}_{0}}\colon H_{{\rm Gar}}^{\frac{7}{2}+1}$
\[
\begin{xy}
(-28,5) *+[Fo]{1}="I";
(-28,-5) *+[Fo]{1}="J";
(-20,0)  *+[Fo]{2B}="K";
(-10,0)  *+[Fo]{4}="L";
 (0,0)  *+[Fo]{6}="A";
 (0,10) *+[Fo]{3}="D";
(10,0) *+[Fo]{5}="B";
(20,0)  *+[Fo]{4}="C";
(30,0) *+[Fo]{3}="E";
(40,0) *+[Fo]{2}="F";
(50,0) *+[Fo]{1}="G";
\ar@{-} "A";"B";
\ar@{-} "B";"C";
\ar@{-} "A";"D";
\ar@{-} "C";"E";
\ar@{-} "E";"F";
\ar@{-} "F";"G";
\ar@{-} "I";"K";
\ar@{-} "J";"K";
\ar@{-} "K";"L";
\ar@{-} "L";"A";
\end{xy}
\hspace{4mm}
\begin{pmatrix}
0 & 0 & -1 & 0
\\
0 & -1 & 1 & 0 
\\
1 & 0 & 1 & -1 
\\
0 & 0 & 0 & -1 
\end{pmatrix}
\]

\columnbreak

\end{multicols}

\begin{multicols}{2}
\noindent	
${\rm IX-3}\colon H_{{\rm Gar}}^{5}$
\[
\begin{xy}
(-40,0) *+[Fo]{1}="J";
(-30,0) *+[Fo]{2}="I";
(-20,0)  *+[Fo]{3}="K";
(-10,0)  *+[Fo]{4}="L";
 (0,0)  *+[Fo]{5}="A";
(10,0) *+[Fo]{4}="B";
(20,0)  *+[Fo]{3}="C";
(0,10) *+[Fo]{2B}="D";
(0,20) *+[Fo]{1}="G";
(30,0) *+[Fo]{2}="E";
(40,0) *+[Fo]{1}="F";
\ar@{-} "A";"B";
\ar@{-} "B";"C";
\ar@{-} "A";"D";
\ar@{-} "C";"E";
\ar@{-} "E";"F";
\ar@{-} "D";"G";
\ar@{-} "I";"K";
\ar@{-} "J";"I";
\ar@{-} "K";"L";
\ar@{-} "L";"A";
\end{xy}
\hspace{4mm}
\begin{pmatrix}
0 & -1 & -1 & 0
\\
-1 & 0 & 0 & -1 
\\
1 & -1 & -1 & 0 
\\
0 & 1 & 0 & 0
\end{pmatrix}
\]

\columnbreak

\end{multicols}

\begin{multicols}{2}
\noindent	
${\rm VII^{*}}\colon H_{{\rm Gar}}^{\frac{9}{2}}$
\[
\begin{xy}
(-50,0) *+[Fo]{1}="M";
(-40,0) *+[Fo]{2B}="J";
(-30,0) *+[Fo]{5}="I";
(-20,0)  *+[Fo]{8}="K";
(-20,10) *+[Fo]{4}="D";
(-10,0)  *+[Fo]{7}="L";
 (0,0)  *+[Fo]{6}="A";
(10,0) *+[Fo]{5}="B";
(20,0)  *+[Fo]{4}="C";
(30,0) *+[Fo]{3}="E";
(40,0) *+[Fo]{2}="F";
(50,0) *+[Fo]{1}="G";
\ar@{-} "A";"B";
\ar@{-} "B";"C";
\ar@{-} "K";"D";
\ar@{-} "C";"E";
\ar@{-} "E";"F";
\ar@{-} "F";"G";
\ar@{-} "I";"K";
\ar@{-} "J";"M";
\ar@{-} "J";"I";
\ar@{-} "K";"L";
\ar@{-} "L";"A";
\end{xy}
\hspace{4mm}
\begin{pmatrix}
0 & -1 & -1 & 0
\\
-1 & 1 & 0 & -1 
\\
1 & -1 & -1 & 0 
\\
0 & 1 & 0 & 0
\end{pmatrix}
\]

\columnbreak

\end{multicols}

\begin{multicols}{2}
\noindent
para[3], ${\rm II_{3-0}}\colon H_{{\rm FS}}^{A_5}$
\[
\begin{xy}
(0,9) *+[Fo]{2}="A";
 (-10,0)  *+[Fo]{B}="B";
(10,0) *+[Fo]{B}="D" ;
(0,-9) *+[Fo]{1}="H" ;
(-7,15) *+[Fo]{1}="J";
(7,15) *+[Fo]{1}="K";
\ar@{-}"A";"K";
\ar@{-}"A";"B";
\ar@{-}"A";"D";
\ar@{-}"D";"H";
\ar@{-}"B";"H";
\ar@{-}"A";"J";
\end{xy}
\hspace{4mm}
\begin{matrix}
\begin{pmatrix}
-1 & 0 & 0 & 0
\\
-1 & 1 & 0 & 3 
\\
0 & 0 & 0 & 1
\\
0 & 0 & 0 & 1 
\end{pmatrix}
\end{matrix}
\]

\columnbreak
\noindent
para[5], ${\rm II_{3-1}}\colon H_{{\rm FS}}^{A_4}$
\[
\begin{xy}
(-9,-2)  *+[Fo]{B}="B";
(9,-2) *+[Fo]{B}="D" ;
(0,-9) *+[Fo]{1}="H" ;
(6,7) *+[Fo]{2}="L";
(-6,7) *+[Fo]{2}="I"; 
(-7,15) *+[Fo]{1}="J";
(7,15) *+[Fo]{1}="K";
\ar@{-}"D";"L";
\ar@{-}"L";"K";
\ar@{-}"I";"B";
\ar@{-}"L";"I";
\ar@{-}"D";"H";
\ar@{-}"B";"H";
\ar@{-}"J";"I";
\end{xy}
\hspace{4mm}
\begin{matrix}
\begin{pmatrix}
-1 & 0 & -1 & 0
\\
1 & 1 & 1 & 3 
\\
0 & 0 & -1 & 1
\\
0 & 0 & 0 & 1 
\end{pmatrix}
\end{matrix}
\]

\columnbreak
\end{multicols}

\begin{multicols}{2}
\noindent
para[5], ${\rm II_{3-2}}\colon H_{{\rm FS}}^{A_3}$
\[
\begin{xy}
(0,10) *+[Fo]{2}="A";
(-8,-2)  *+[Fo]{B}="B";
(8,-2) *+[Fo]{B}="D" ;
(0,-8) *+[Fo]{1}="H" ;
(8,6) *+[Fo]{2}="L";
(-8,6) *+[Fo]{2}="I"; 
(-13,11) *+[Fo]{1}="J";
(13,11) *+[Fo]{1}="K";
\ar@{-}"D";"L";
\ar@{-}"L";"K";
\ar@{-}"I";"B";
\ar@{-}"A";"L";
\ar@{-}"D";"H";
\ar@{-}"B";"H";
\ar@{-}"J";"I";
\ar@{-}"A";"I";
\end{xy}
\begin{matrix}
\begin{pmatrix}
-1 & 0 & -2 & 0
\\
1 & 1 & 2 & 3 
\\
0 & 0 & -1 & 1
\\
0 & 0 & 0 & 1 
\end{pmatrix}
\end{matrix}
\]
\columnbreak

\noindent

para[5], ${\rm II_{3-3}}\colon H_{{\rm Suz}}^{\frac{3}{2}+2}$
\[
\begin{xy}
(-4,10) *+[Fo]{2}="A";
(4,10) *+[Fo]{2}="C";
(-8,-3)  *+[Fo]{B}="B";
(8,-3) *+[Fo]{B}="D" ;
(0,-8) *+[Fo]{1}="H" ;
(9,4) *+[Fo]{2}="L";
(-9,4) *+[Fo]{2}="I"; 
(-14,8) *+[Fo]{1}="J";
(14,8) *+[Fo]{1}="K";
\ar@{-}"D";"L";
\ar@{-}"L";"K";
\ar@{-}"I";"B";
\ar@{-}"C";"L";
\ar@{-}"A";"C";
\ar@{-}"D";"H";
\ar@{-}"B";"H";
\ar@{-}"J";"I";
\ar@{-}"A";"I";
\end{xy}
\hspace{4mm}
\begin{matrix}
\begin{pmatrix}
-1 & 0 & -3 & 0
\\
1 & 1 & 3 & 3 
\\
0 & 0 & -1 & 1
\\
0 & 0 & 0 & 1 
\end{pmatrix}
\end{matrix}
\]

\end{multicols}

\begin{multicols}{2}
\noindent
para[5], ${\rm II_{3-4}}\colon H_{{\rm KFS}}^{\frac{3}{2}+\frac{3}{2}}$
\[
\begin{xy}
(-7,9) *+[Fo]{2}="A";
(0,12) *+[Fo]{2}="E";
(7,9) *+[Fo]{2}="C";
(-7,-5)  *+[Fo]{B}="B";
(7,-5) *+[Fo]{B}="D" ;
(0,-8) *+[Fo]{1}="H" ;
(10,2) *+[Fo]{2}="L";
(-10,2) *+[Fo]{2}="I"; 
(-16,3) *+[Fo]{1}="J";
(16,3) *+[Fo]{1}="K";
\ar@{-}"D";"L";
\ar@{-}"L";"K";
\ar@{-}"I";"B";
\ar@{-}"C";"L";
\ar@{-}"A";"E";
\ar@{-}"C";"E";
\ar@{-}"D";"H";
\ar@{-}"B";"H";
\ar@{-}"J";"I";
\ar@{-}"A";"I";
\end{xy}
\hspace{4mm}
\begin{matrix}
\begin{pmatrix}
-1 & 0 & -4 & 0
\\
1 & 1 & 4 & 3 
\\
0 & 0 & -1 & 1
\\
0 & 0 & 0 & 1 
\end{pmatrix}
\end{matrix}
\]

\columnbreak

\noindent
para[5], ${\rm II_{3-5}}\colon H_{{\rm KFS}}^{\frac{4}{3}+\frac{3}{2}}$
\[
\begin{xy}
(-9,8) *+[Fo]{2}="A";
(-4,12) *+[Fo]{2}="E";
(4,12) *+[Fo]{2}="F";
(9,8) *+[Fo]{2}="C";
(-7,-5)  *+[Fo]{B}="B";
(7,-5) *+[Fo]{B}="D" ;
(0,-8) *+[Fo]{1}="H" ;
(10,1) *+[Fo]{2}="L";
(-10,1) *+[Fo]{2}="I"; 
(-17,1) *+[Fo]{1}="J";
(17,1) *+[Fo]{1}="K";
\ar@{-}"D";"L";
\ar@{-}"L";"K";
\ar@{-}"I";"B";
\ar@{-}"C";"L";
\ar@{-}"A";"E";
\ar@{-}"F";"E";
\ar@{-}"C";"F";
\ar@{-}"D";"H";
\ar@{-}"B";"H";
\ar@{-}"J";"I";
\ar@{-}"A";"I";
\end{xy}
\hspace{4mm}
\begin{matrix}
\begin{pmatrix}
-1 & 0 & -5 & 0
\\
1 & 1 & 5 & 3 
\\
0 & 0 & -1 & 1
\\
0 & 0 & 0 & 1 
\end{pmatrix}
\end{matrix}
\]
\end{multicols}

\begin{multicols}{2}
\noindent
para[5], ${\rm II_{3-6}}\colon H_{{\rm KFS}}^{\frac{4}{3}+\frac{4}{3}}$
\[
\begin{xy}
(-11,6) *+[Fo]{2}="A";
(-7,11) *+[Fo]{2}="E";
(0,13) *+[Fo]{2}="G";
(7,11) *+[Fo]{2}="F";
(11,6) *+[Fo]{2}="C";
(-7,-7)  *+[Fo]{B}="B";
(7,-7) *+[Fo]{B}="D" ;
(0,-9) *+[Fo]{1}="H" ;
(11,-1) *+[Fo]{2}="L";
(-11,-1) *+[Fo]{2}="I"; 
(-18,-3) *+[Fo]{1}="J";
(18,-3) *+[Fo]{1}="K";
\ar@{-}"D";"L";
\ar@{-}"L";"K";
\ar@{-}"I";"B";
\ar@{-}"C";"L";
\ar@{-}"A";"E";
\ar@{-}"E";"G";
\ar@{-}"F";"G";
\ar@{-}"C";"F";
\ar@{-}"D";"H";
\ar@{-}"B";"H";
\ar@{-}"J";"I";
\ar@{-}"A";"I";
\end{xy}
\hspace{4mm}
\begin{matrix}
\begin{pmatrix}
-1 & 0 & -6 & 0
\\
1 & 1 & 6 & 3 
\\
0 & 0 & -1 & 1
\\
0 & 0 & 0 & 1 
\end{pmatrix}
\end{matrix}
\]

\columnbreak
\end{multicols}

\begin{multicols}{2}
\noindent
para[4], ${\rm II_{3-1}}\colon H_{{\rm NY}}^{A_5}$
\[
\begin{xy}
(0,9) *+[Fo]{2}="A";
(0,18) *+[Fo]{2}="L";
 (-10,0)  *+[Fo]{B}="B";
(10,0) *+[Fo]{B}="D" ;
(0,-9) *+[Fo]{1}="H" ;
(-8,22) *+[Fo]{1}="J";
(8,22) *+[Fo]{1}="K";
\ar@{-}"L";"A";
\ar@{-}"L";"K";
\ar@{-}"A";"B";
\ar@{-}"A";"D";
\ar@{-}"D";"H";
\ar@{-}"B";"H";
\ar@{-}"L";"J";
\end{xy}
\hspace{4mm}
\begin{matrix}
\begin{pmatrix}
-1 & 0 & -1 & -1
\\
0 & 1 & 1 & 3 
\\
0 & 0 & -1 & 0
\\
0 & 0 & 0 & 1 
\end{pmatrix}
\end{matrix}
\]

\columnbreak

\noindent
${\rm IV^{*}-II_{3}}\colon H_{{\rm NY}}^{A_4}$
\[
\begin{xy}
(0,9) *+[Fo]{3}="A";
(0,16) *+[Fo]{2}="L";
 (-8,-3)  *+[Fo]{B}="B";
(8,-3) *+[Fo]{B}="D" ;
(0,-9) *+[Fo]{1}="H" ;
(-8,4) *+[Fo]{1}="I"; 
(8,4) *+[Fo]{1}="J";
(0,23) *+[Fo]{1}="K";
\ar@{-}"L";"A";
\ar@{-}"L";"K";
\ar@{-}"I";"B";
\ar@{-}"J";"D";
\ar@{-}"D";"H";
\ar@{-}"B";"H";
\ar@{-}"A";"I";
\ar@{-}"A";"J";
\end{xy}
\hspace{4mm}
\begin{matrix}
\begin{pmatrix}
-1 & 0 & -1 & -1
\\
-1 & 1 & 0 & 3 
\\
1 & 0 & 0 & 0
\\
0 & 0 & 0 & 1 
\end{pmatrix}
\end{matrix}
\]

\end{multicols}

\begin{multicols}{2}
\noindent
${\rm I_{2}-I_0^{*}-0}\colon H_{\rm Ss}^{D_6}$
\[
\begin{xy}
(-10,0) *+[Fo]{1}="A";
 (0,0)  *+[Fo]{D}="B";
(10,0) *+[Fo]{2}="C";
(10,10) *+[Fo]{1}="D" ;
(10,-10) *+[Fo]{1}="I"; 
(20,0) *+[Fo]{1}="K"; 
\ar@{=} "A";"B"
\ar@{-} "B";"C";
\ar@{-} "C";"D";
\ar@{-} "C";"I";
\ar@{-} "C";"K";
\end{xy}
\hspace{4mm}
\begin{matrix}
\begin{pmatrix}
-1 & 0 & 0 & 0
\\
0 & 1 & 0 & 2
\\
0 & 0 & -1 & 0 
\\
0 & 0 & 0 & 1 
\end{pmatrix}
\end{matrix}
\]
\columnbreak

\noindent
${\rm I_{2}-I_1^{*}-0}\colon H_{\rm Ss}^{D_5}$
\[
\begin{xy}
(-10,0) *+[Fo]{1}="A";
 (0,0)  *+[Fo]{D}="B";
(10,0) *+[Fo]{2}="C";
(10,10) *+[Fo]{1}="D" ;
(20,0) *+[Fo]{2}="M" ;
(28,5) *+[Fo]{1}="I"; 
(28,-5) *+[Fo]{1}="K"; 
\ar@{=} "A";"B"
\ar@{-} "B";"C";
\ar@{-} "C";"D";
\ar@{-} "C";"M";
\ar@{-}"M";"I";
\ar@{-}"M";"K";
\end{xy}
\hspace{4mm}
\begin{matrix}
\begin{pmatrix}
-1 & 0 & -1 & 0
\\
0 & 1 & 0 & 2
\\
0 & 0 & -1 & 0 
\\
0 & 0 & 0 & 1 
\end{pmatrix}
\end{matrix}
\]
\end{multicols}

\begin{multicols}{2}
\noindent
${\rm I_{2}-I_2^{*}-0}\colon H_{\rm Ss}^{D_4}$
\[
\begin{xy}
(-10,0) *+[Fo]{1}="A";
 (0,0)  *+[Fo]{D}="B";
(10,0) *+[Fo]{2}="C";
(10,10) *+[Fo]{1}="D" ;
(20,0) *+[Fo]{2}="L" ;
(30,0) *+[Fo]{2}="M" ;
(38,5) *+[Fo]{1}="I"; 
(38,-5) *+[Fo]{1}="K"; 
\ar@{=} "A";"B"
\ar@{-} "B";"C";
\ar@{-} "C";"D";
\ar@{-} "C";"L";
\ar@{-} "L";"M";
\ar@{-}"M";"I";
\ar@{-}"M";"K";
\end{xy}
\hspace{4mm}
\begin{matrix}
\begin{pmatrix}
-1 & 0 & -2 & 0
\\
0 & 1 & 0 & 2
\\
0 & 0 & -1 & 0 
\\
0 & 0 & 0 & 1 
\end{pmatrix}
\end{matrix}
\]
\columnbreak

\end{multicols}

\noindent
${\rm I_{2}-I_3^{*}-0}\colon H_{\rm KSs}^{\frac{3}{2}+2}$
\begin{multicols}{3}
\[
\begin{xy}
(-10,0) *+[Fo]{1}="A";
 (0,0)  *+[Fo]{D}="B";
(10,0) *+[Fo]{2}="C";
(10,10) *+[Fo]{1}="D" ;
(20,0) *+[Fo]{2}="L" ;
(30,0) *+[Fo]{2}="H" ;
(40,0) *+[Fo]{2}="M" ;
(48,5) *+[Fo]{1}="I"; 
(48,-5) *+[Fo]{1}="K"; 
\ar@{=} "A";"B"
\ar@{-} "B";"C";
\ar@{-} "C";"D";
\ar@{-} "C";"L";
\ar@{-} "L";"H";
\ar@{-} "H";"M";
\ar@{-}"M";"I";
\ar@{-}"M";"K";
\end{xy}
\hspace{4mm}
\begin{matrix}
\begin{pmatrix}
-1 & 0 & -3 & 0
\\
0 & 1 & 0 & 2
\\
0 & 0 & -1 & 0 
\\
0 & 0 & 0 & 1 
\end{pmatrix}
\end{matrix}
\]
\end{multicols}

\noindent
${\rm I_{2}-I_4^{*}-0}\colon H_{\rm KSs}^{\frac{4}{3}+2}$
\begin{multicols}{3}
\[
\begin{xy}
(-10,0) *+[Fo]{1}="A";
 (0,0)  *+[Fo]{D}="B";
(10,0) *+[Fo]{2}="C";
(10,10) *+[Fo]{1}="D" ;
(20,0) *+[Fo]{2}="L" ;
(30,0) *+[Fo]{2}="H" ;
(40,0) *+[Fo]{2}="N" ;
(50,0) *+[Fo]{2}="M" ;
(58,5) *+[Fo]{1}="I"; 
(58,-5) *+[Fo]{1}="K"; 
\ar@{=} "A";"B"
\ar@{-} "B";"C";
\ar@{-} "C";"D";
\ar@{-} "C";"L";
\ar@{-} "L";"H";
\ar@{-} "H";"N";
\ar@{-} "N";"M";
\ar@{-}"M";"I";
\ar@{-}"M";"K";
\end{xy}
\hspace{4mm}
\begin{matrix}
\begin{pmatrix}
-1 & 0 & -4 & 0
\\
0 & 1 & 0 & 2
\\
0 & 0 & -1 & 0 
\\
0 & 0 & 0 & 1 
\end{pmatrix}
\end{matrix}
\]
\end{multicols}

\noindent
${\rm I_{2}-I_5^{*}-0}\colon H_{\rm KSs}^{\frac{5}{4}+2}$
\begin{multicols}{3}
\[
\begin{xy}
(-10,0) *+[Fo]{1}="A";
 (0,0)  *+[Fo]{D}="B";
(10,0) *+[Fo]{2}="C";
(10,10) *+[Fo]{1}="D" ;
(20,0) *+[Fo]{2}="L" ;
(30,0) *+[Fo]{2}="H" ;
(40,0) *+[Fo]{2}="N" ;
(50,0) *+[Fo]{2}="O" ;
(60,0) *+[Fo]{2}="M" ;
(68,5) *+[Fo]{1}="I"; 
(68,-5) *+[Fo]{1}="K"; 
\ar@{=} "A";"B"
\ar@{-} "B";"C";
\ar@{-} "C";"D";
\ar@{-} "C";"L";
\ar@{-} "L";"H";
\ar@{-} "H";"N";
\ar@{-} "N";"O";
\ar@{-} "O";"M";
\ar@{-}"M";"I";
\ar@{-}"M";"K";
\end{xy}
\hspace{4mm}
\begin{matrix}
\begin{pmatrix}
-1 & 0 & -5 & 0
\\
0 & 1 & 0 & 2
\\
0 & 0 & -1 & 0 
\\
0 & 0 & 0 & 1 
\end{pmatrix}
\end{matrix}
\]
\end{multicols}

\noindent
${\rm I_{2}-I_6^{*}-0}\colon H_{\rm KSs}^{\frac{3}{2}+\frac{5}{4}}$
\begin{multicols}{3}
\[
\begin{xy}
(-10,0) *+[Fo]{1}="A";
 (0,0)  *+[Fo]{D}="B";
(10,0) *+[Fo]{2}="C";
(10,10) *+[Fo]{1}="D" ;
(20,0) *+[Fo]{2}="L" ;
(30,0) *+[Fo]{2}="H" ;
(40,0) *+[Fo]{2}="N" ;
(50,0) *+[Fo]{2}="O" ;
(60,0) *+[Fo]{2}="P" ;
(70,0) *+[Fo]{2}="M" ;
(78,5) *+[Fo]{1}="I"; 
(78,-5) *+[Fo]{1}="K"; 
\ar@{=} "A";"B"
\ar@{-} "B";"C";
\ar@{-} "C";"D";
\ar@{-} "C";"L";
\ar@{-} "L";"H";
\ar@{-} "H";"N";
\ar@{-} "N";"O";
\ar@{-} "O";"P";
\ar@{-} "P";"M";
\ar@{-}"M";"I";
\ar@{-}"M";"K";
\end{xy}
\hspace{4mm}
\begin{matrix}
\begin{pmatrix}
-1 & 0 & -6 & 0
\\
0 & 1 & 0 & 2
\\
0 & 0 & -1 & 0 
\\
0 & 0 & 0 & 1 
\end{pmatrix}
\end{matrix}
\]
\end{multicols}

\begin{multicols}{2}
\noindent
${\rm I_{0}-I_{0}^{*}-1}\colon H^{\rm Mat}_{\rm VI}$
\[
\begin{xy}
(-10,0) *+[Fo]{A}="A";
 (0,0)  *+[Fo]{B}="B";
(10,0) *+[Fo]{2}="C";
(10,10) *+[Fo]{1}="D" ;
(20,0) *+[Fo]{1}="H" ;
(10,-10) *+[Fo]{1}="I"; 
\ar@{-} "A";"B"
\ar@{-} "B";"C";
\ar@{-} "C";"D";
\ar@{-} "C";"H";
\ar@{-}"C";"I";
\end{xy}
\begin{matrix}
\hspace{3mm}
\begin{pmatrix}
1 & 0 & 0 & 0
\\
0 & -1 & 0 & 0 
\\
0 & 0 & 1 & 0 
\\
0 & 0 & 0 & -1 
\end{pmatrix}
\end{matrix}
\]

\columnbreak

\noindent
${\rm I_{0}-I_{1}^{*}-1}\colon H^{\rm Mat}_{\rm V}$
\[
\begin{xy}
(-10,0) *+[Fo]{A}="A";
 (0,0)  *+[Fo]{B}="B";
(10,0) *+[Fo]{2}="C";
(10,10) *+[Fo]{1}="D" ;
(20,0) *+[Fo]{2}="H" ;
(20,10) *+[Fo]{1}="I"; 
(30,0) *+[Fo]{1}="J";
\ar@{-} "A";"B"
\ar@{-} "B";"C";
\ar@{-} "C";"D";
\ar@{-} "C";"H";
\ar@{-}"H";"I";
\ar@{-} "H";"J";
\end{xy}
\hspace{4mm}
\begin{pmatrix}
1 & 0 & 0 & 0
\\
0 & -1 & 0 & -1 
\\
0 & 0 & 1 & 0 
\\
0 & 0 & 0 & -1 
\end{pmatrix}
\]

\end{multicols}

\begin{multicols}{2}
\noindent
${\rm I_{0}-I_{2}^{*}-1}\colon H^{\rm Mat}_{{\rm III}(D_{6})}$
\[
\begin{xy}
(-10,0) *+[Fo]{A}="A";
 (0,0)  *+[Fo]{B}="B";
(10,0) *+[Fo]{2}="C";
(10,10) *+[Fo]{1}="D" ;
(20,0)  *+[Fo]{2}="E" ;
(30,0) *+[Fo]{2}="H" ;
(30,10) *+[Fo]{1}="I"; 
(40,0) *+[Fo]{1}="J";
\ar@{-} "A";"B"
\ar@{-} "B";"C";
\ar@{-} "C";"D";
\ar@{-} "C";"E";
\ar@{-} "E";"H";
\ar@{-}"H";"I";
\ar@{-} "H";"J";
\end{xy}
\hspace{4mm}
\begin{pmatrix}
1 & 0 & 0 & 0
\\
0 & -1 & 0 & -2 
\\
0 & 0 & 1 & 0 
\\
0 & 0 & 0 & -1 
\end{pmatrix}
\]

\columnbreak

\end{multicols}

\begin{multicols}{2}
\noindent
${\rm I_{0}-I_{3}^{*}-1}\colon H^{\rm Mat}_{{\rm III}(D_{7})}$
\[
\begin{xy}
(-10,0) *+[Fo]{A}="A";
 (0,0)  *+[Fo]{B}="B";
(10,0) *+[Fo]{2}="C";
(10,10) *+[Fo]{1}="D" ;
(20,0)  *+[Fo]{2}="E" ;
(30,0) *+[Fo]{2}="F";
(40,0) *+[Fo]{2}="H" ;
(40,10) *+[Fo]{1}="I"; 
(50,0) *+[Fo]{1}="J";
\ar@{-} "A";"B"
\ar@{-} "B";"C";
\ar@{-} "C";"D";
\ar@{-} "C";"E";
\ar@{-} "E";"F";
\ar@{-}"F";"H";
\ar@{-}"H";"I";
\ar@{-} "H";"J";
\end{xy}
\hspace{4mm}
\begin{pmatrix}
1 & 0 & 0 & 0
\\
0 & -1 & 0 & -3 
\\
0 & 0 & 1 & 0 
\\
0 & 0 & 0 & -1 
\end{pmatrix}
\]

\columnbreak

\end{multicols}

\begin{multicols}{2}
\noindent
${\rm I_{0}-I_{4}^{*}-1}\colon H^{\rm Mat}_{{\rm III}(D_{8})}$
\[
\begin{xy}
(-10,0) *+[Fo]{A}="A";
 (0,0)  *+[Fo]{B}="B";
(10,0) *+[Fo]{2}="C";
(10,10) *+[Fo]{1}="D" ;
(20,0)  *+[Fo]{2}="E" ;
(30,0) *+[Fo]{2}="F";
(40,0) *+[Fo]{2}="G" ;
(50,0) *+[Fo]{2}="H" ;
(50,10) *+[Fo]{1}="I"; 
(60,0) *+[Fo]{1}="J";
\ar@{-} "A";"B"
\ar@{-} "B";"C";
\ar@{-} "C";"D";
\ar@{-} "C";"E";
\ar@{-} "E";"F";
\ar@{-}"F";"G";
\ar@{-} "G";"H";
\ar@{-}"H";"I";
\ar@{-} "H";"J";
\end{xy}
\hspace{4mm}
\begin{pmatrix}
1 & 0 & 0 & 0
\\
0 & -1 & 0 & -4 
\\
0 & 0 & 1 & 0 
\\
0 & 0 & 0 & -1 
\end{pmatrix}
\]

\columnbreak

\end{multicols}

\noindent
${\rm I_{0}-IV^{*}-1}\colon H^{\rm Mat}_{\rm  IV}$
\begin{multicols}{2}
\[
\begin{xy}
\ar@{-} (-10,0) *+[Fo]{A};
 (0,0)  *+[Fo]{B}="B"\ar@{-} "B";
(10,0) *+[Fo]{2}="H" \ar@{-} "H";
(20,0)  *+[Fo]{3}="C" \ar@{-} "C";
(20,10) *+[Fo]{2}="A" \ar@{-} "A";
(20,20) *+[Fo]{1}="G" \ar@{-} "G";
(30,0) *+[Fo]{2}="D" ;
(40,0) *+[Fo]{1}="E" ;
\ar@{-} "C";"D";
\end{xy}
\hspace{4mm}
\begin{pmatrix}
-1 & 0 & -1 & 0
\\
0 & 1 & 0 & 0 
\\
1 & 0 & 0 & 0 
\\
0 & 0 & 0 & 1 
\end{pmatrix}
\]	

\columnbreak

\end{multicols}

\begin{multicols}{2}
\noindent
${\rm I_{0}-III^{*}-1}\colon H^{\rm Mat}_{\rm II}$
\[
\begin{xy}
\ar@{-} (-10,0) *+[Fo]{A};
 (0,0)  *+[Fo]{B}="H"\ar@{-} "H";
(10,0) *+[Fo]{2}="B" \ar@{-} "B";
(20,0)  *+[Fo]{3}="C" \ar@{-} "C";
(30,0) *+[Fo]{4}="D" \ar@{-} "D";
(30,10) *+[Fo]{2}="A" \ar@{-} "A";
(40,0) *+[Fo]{3}="E";
(50,0) *+[Fo]{2}="F" \ar@{-} "F";
(60,0) *+[Fo]{1}="G";
\ar@{-} "D";"E";
\end{xy}
\]

\columnbreak

	\[
	\begin{pmatrix}
		0 & 0 & -1 & 0
		\\
		0 & 1 & 0 & 0 
		\\
		1 & 0 & 0 & 0 
		\\
		0 & 0 & 0 & 1 
	\end{pmatrix}
	\]

\end{multicols}

\begin{multicols}{2}
\noindent
${\rm I_{0}-II^{*}-1}\colon H^{\rm Mat}_{\rm I}$
\[
\begin{xy}
\ar@{-} (-10,0) *+[Fo]{A};
 (0,0)  *+[Fo]{B}="H"\ar@{-} "H";
(10,0) *+[Fo]{2}="B" \ar@{-} "B";
(20,0)  *+[Fo]{3}="C" \ar@{-} "C";
(30,0) *+[Fo]{4}="D" \ar@{-} "D";
(40,0) *+[Fo]{5}="E" \ar@{-} "E";
(50,0) *+[Fo]{6}="F" \ar@{-} "F";
(50,10) *+[Fo]{3}="A" \ar@{-} "A";
(60,0) *+[Fo]{4}="G";
(70,0) *+[Fo]{2};
\ar@{-} "F";"G";
\end{xy}
\]

\columnbreak

	\[
	\begin{pmatrix}
	0 & 0 & -1 & 0
	\\
	0 & 1 & 0 & 0 
	\\
	1 & 0 & 1 & 0 
	\\
	0 & 0 & 0 & 1 
	\end{pmatrix}
	\]
\end{multicols}

%


\def\cydot{\leavevmode\raise.4ex\hbox{.}} \def\cprime{$'$}

\end{document}